\newlength{\defbaselineskip}
\newcommand{\setlinespacing}[1]%
           {\setlength{\baselineskip}{#1 \defbaselineskip}}
\theoremstyle{plain}
\newtheorem{thm}{Theorem}[section]
\newtheorem{cor}[thm]{Corollary}
\newtheorem{lem}[thm]{Lemma}
\newtheorem{prop}[thm]{Proposition}
\newtheorem{exam}[thm]{Example}
\theoremstyle{definition}
\newtheorem{ass}{Assumption}[section]
\newtheorem{rmk}{Remark}[section]
\newcommand{\eps}{\varepsilon}
\newcommand{\cL}{\mathcal{L}}
\newcommand{\bR}{\mathbb{R}}
\newcommand{\ptl}{\partial}
\newcommand{\E}{\mathbb{E}}
\newcommand{\Pcal}{\mathcal{P}_2(\bR^d)}
\newcommand{\Lions}{\partial_\mu}
\makeatletter\@addtoreset{equation}{section} \makeatother
\begin{document}

\title{Stochastic Control Methods for Optimization}

\author{Jinniao Qiu\footnotemark[1]\footnotemark[2]  
}
\date{
 First version: January 1, 2026; Revised version: April 18, 2026.}
\footnotetext[1]{This work was partially supported by the National Science and Engineering Research Council of Canada (NSERC). }
\footnotetext[2]{Department of Mathematics \& Statistics, University of Calgary, 2500 University Drive NW, Calgary, AB T2N 1N4, Canada. \textit{E-mail}: \texttt{jinniao.qiu@ucalgary.ca} (J. Qiu)
.}
 
\maketitle

\begin{abstract}
In this work, we investigate a stochastic control framework for global optimization over both Euclidean spaces and 
the Wasserstein space of probability measures, where the objective function may be non-convex and/or non-differentiable. 
In the Euclidean setting, the original minimization problem is approximated by a family of 
regularized stochastic control problems; using dynamic programming, we analyze the associated Hamilton--Jacobi--Bellman 
equations and obtain tractable representations via the Cole--Hopf transformation and the Feynman--Kac formula. 
For optimization over probability measures, we formulate a regularized mean-field control problem 
characterized by a master equation, and further approximate it by controlled $N$-particle systems. 
We establish that, as the regularization parameter tends to zero (and as the particle number tends to infinity for the optimization over probability measures), 
the value of the control problem converges to the global minimum of the original objective. 
Building on the resulting probabilistic representations, we propose the Monte Carlo-based numerical schemes that are derivative-free due to the utilization of the Bismut-Elworthy-Li formula and 
numerical experiments are reported to illustrate the effectiveness of the methods and to support the theoretical convergence rates.
\end{abstract}
\smallskip 

\noindent\textbf{Mathematics Subject Classification:} 90C26, 65C35, 65C05, 49N80, 49N90 

\noindent\textbf{Keywords:} Optimization, mean-field control, Brownian bridge, Schr\"odinger bridge, Cole-Hopf transformation, generative modeling

\section{Introduction}

In this work, we are concerned with the following classical global optimization problem:
\begin{equation}\label{opt-prob}
    \min_{x\in \mathcal X} G(x),
\end{equation}
where $\mathcal X$ is a finite-dimensional space $\bR^d$ or the space of probability measures 
and the objective function $G:\mathcal X \rightarrow \bR$ may be non-convex and/or non-differentiable.
Optimization problems of the form \eqref{opt-prob} arise in a wide range of applications, including operational research, 
machine learning, artificial intelligence, finance, engineering design, economics, and operations research. 
Traditional optimization methods, such as gradient descent and Newton's method, often struggle with non-convex or 
non-differentiable functions due to the presence of multiple local minimas and the lack of gradient information; see 
 \cite{danilova2022recent,velasco2024literature}.
To address these challenges, we propose and study stochastic control methods (SCM) to solve the optimization problem \eqref{opt-prob}. 

\subsection{Optimization over finite-dimensional spaces: $\mathcal X = \bR^d$} 

\subsubsection{Stochastic control formulations of problem \eqref{opt-prob}}
We introduce the following stochastic control problem:
\begin{equation}\label{sto-control-prob}
    \min_{\theta\in \Theta} \mathbb E\left[ G(X_1^{0,x_0;\theta})\right],
\end{equation}
subject to the controlled stochastic differential equation (SDE):
\begin{equation}\label{controlled-SDE}
    X_t^{0,x_0;\theta} = x_0+\int_0^t \theta_s ds + W_t, \quad t\in[0,1].
\end{equation}
Here and below, we take terminal time $T=1$ and the $\bR^d$-valued initial value $x_0$ and the $d$-dimensional Wiener process $(W_t)_{t\geq 0}$ are 
defined on a complete filtered probability space $(\Omega, \mathcal{F},(\mathcal{F}_t)_{t\geq 0}, \mathbb{P})$ so that 
the filtration $(\mathcal{F}_t)_{t\geq 0}$ is generated by $x_0$ and $(W_t)_{t\geq 0}$ and augmented by all the 
$\mathbb P$-null sets in $\mathcal{F}$. In particular, $x_0$ is $\mathcal F_0$-measurable and independent of the Wiener process $(W_t)_{t\geq 0}$. 
The admissible control $\theta$ is an $(\mathcal{F}_t)_{t\geq 0}$-adapted process, and the set of admissible controls, denoted by $\Theta$, consists of all such admissible control processes $\theta$  
 satisfying $\E\left[\left(\int_0^1|\theta_s|ds\right)^2\right]<\infty$.

Suppose $\xi\in\bR^d$ is a minimizer of \eqref{opt-prob}, i.e., $G(\xi)=\min_{x\in \bR^d} G(x)$. Then we may take the feedback control $\bar\theta_t=\frac{\xi-X^{0,x_0;\bar\theta}_t}{1-t} \textbf{1}_{[0,1)}(t)$ and the Brownian bridge theory 
(see \cite{karatzas1998brownian}) implies that $X_1^{0,x_0;\theta}=\xi$ almost surely (a.s. for short).  This yields the equivalence of the optimization problem \eqref{opt-prob} and the stochastic control problem \eqref{sto-control-prob}.
Just note that the optimal control $\bar\theta$ is not unique in $\Theta$ since we may take $\theta_t\equiv 0$ for $t\in[0,\frac{1}{2})$ and construct the Brownian bridge over the time interval $[\frac{1}{2},1]$ from $X^{0,x_0;\theta}_{1/2}=x_0$ to $\xi$. 
This nonuniqueness incurs difficulties in characterizing the optimal control. To address this issue, we consider the following regularized stochastic control problem:
\begin{equation}\label{reg-sto-control-prob}
    \min_{\theta\in \Theta} \mathbb E\left[ G(X_1^{0,x_0;\theta}) + \frac{\eps}{2}\int_0^1 |\theta_t|^2 dt\right],
\end{equation}
subject to the controlled SDE \eqref{controlled-SDE}, where $\eps\in(0,e^{-1})$ is a regularization parameter.
The additional quadratic term in the cost functional penalizes large control values and encourages smoother control strategies.
Under suitable conditions on the objective function $G$, this regularization helps to facilitate the characterization of the unique optimal control, which also paves the way for numerical approximations.


\subsubsection{Recall a standard resolution of the regularized stochastic control problem \eqref{reg-sto-control-prob}}
Indeed, we may define the cost functional associated with the regularized stochastic control problem \eqref{reg-sto-control-prob} as follows:
\begin{equation}\label{cost-functional}
    J^{\theta}_{\eps}(t,x) = \mathbb E\left[ G(X_1^{t,x;\theta}) + \frac{\eps}{2}\int_t^1 |\theta_s|^2 ds  \right],
\end{equation}
with $X_s^{t,x;\theta}=x+\int_t^s \theta_rdr + W_s-W_t$ for $s\in[t,1]$, and set the value function as
\begin{equation}\label{value-function}
    V_{\eps}(t,x) = \inf_{\theta\in \Theta} J^{\theta}_{\eps}(t,x).
\end{equation}
By the dynamic programming principle (\cite{fleming2006controlled,YZ99}), the value function $V_{\eps}$ satisfies the following Hamilton-Jacobi-Bellman (HJB) equation:
\begin{equation}\label{HJB-eqn}
    \left\{\begin{aligned}
        &-\ptl_t V_{\eps}(t,x) - \frac{1}{2}\Delta V_{\eps}(t,x)   + \frac{1}{2\eps}|\partial_x V_{\eps}(t,x)|^2 = 0, \quad (t,x)\in[0,1)\times\bR^d,\\
        &V_{\eps}(1,x) = G(x), \quad x\in\bR^d.
    \end{aligned}\right.
\end{equation} 
The quadratic term inspires us to use the Cole-Hopf transformation to convert the nonlinear HJB equation \eqref{HJB-eqn} into a linear backward heat equation.  Specifically, we define the following transformation:
\begin{equation}\label{Cole-Hopf-transformation}
    u(t,x) = e^{-\frac{1}{\eps}V_{\eps}(t,x)},
\end{equation}
which leads to the following linear PDE for $u(t,x)$:
\begin{equation}\label{PDE-u}
    \left\{\begin{aligned} 
        &-\ptl_t u(t,x) - \frac{1}{2}\Delta u(t,x) = 0, \quad (t,x)\in[0,1)\times\bR^d,\\
        &u(1,x) = e^{-\frac{1}{\eps}G(x)}, \quad x\in\bR^d.
    \end{aligned}\right.
\end{equation}
Then the Feynman-Kac formula implies that the solution to \eqref{PDE-u} can be represented as
\begin{equation}\label{FK-representation}
    u(t,x) = \mathbb E\left[ e^{-\frac{1}{\eps}G(W_1-W_t+x)}  \right]\, ,
\end{equation}
for $(t,x)\in[0,1]\times\bR^d$.  Combining \eqref{Cole-Hopf-transformation} and \eqref{FK-representation}, we obtain the following representation of the value function $V_{\eps}$:
\begin{equation}\label{value-function-representation}
    V_{\eps}(t,x) = -\eps \ln \mathbb E\left[ e^{-\frac{1}{\eps}G(W_1-W_t+x)}   \right].
\end{equation}

The preceding derivations are largely standard; we include them to establish a self-contained foundation for the developments in subsequent sections, in particular our treatment of optimization over spaces of probability measures. 
In stochastic control, the Cole--Hopf transformation can be traced back to 
Fleming \cite{fleming1978optimal}. For further applications and extensions, see, e.g., \cite{berneroptimal,fleming2006controlled,leger2021hopf} and the references therein.

\subsubsection{Main result on convergence analysis as $\eps\to 0$}
Solving the HJB equation \eqref{HJB-eqn} yields the optimal (feedback) control $\theta^*_{\eps}$ with the following characterization:
\begin{align}
    \theta^*_{\eps}\!\left(t,X_t^{0,x_0;\theta_{\eps}^*}\right)
    = -\frac{1}{\eps}\partial_x V_{\eps}\!\left(t,X_t^{0,x_0;\theta_{\eps}^*}\right)
    &= \frac{\partial_x \mathbb E\!\left[ \exp\!\left(-\frac{1}{\eps}G(W_1-W_t+x)\right) \right]}{\mathbb E\!\left[ \exp\!\left(-\frac{1}{\eps}G(W_1-W_t+x)\right) \right]}
    \bigg|_{x=X_t^{0,x_0;\theta_{\eps}^*}}
    \label{optimal-control-in-terms-of-V}\\
    &= \frac{\mathbb E\!\left[ \exp\!\left(-\frac{1}{\eps}G(W_1-W_t+x)\right)\,\frac{W_1-W_t}{1-t} \right]}{\mathbb E\!\left[ \exp\!\left(-\frac{1}{\eps}G(W_1-W_t+x)\right) \right]}
    \bigg|_{x=X_t^{0,x_0;\theta_{\eps}^*}},
    \label{optimal-control-characterization}
\end{align}
where from \eqref{optimal-control-in-terms-of-V} to \eqref{optimal-control-characterization} we have used the integration-by-parts formula (see \eqref{representation-gradient}) or so-called Bismut-Elworthy-Li formula (see, e.g., \cite{ElworthyLi1994}).

Substituting the optimal control characterization \eqref{optimal-control-characterization} into the controlled SDE \eqref{controlled-SDE}, we obtain the following closed-loop SDE for the optimal state process $X_t^{0,x_0;\theta_{\eps}^*}$:
\begin{equation}\label{closed-loop-SDE}
    X_t^{0,x_0;\theta_{\eps}^*} = x_0 + \int_0^t \frac{\mathbb E\left[ e^{-\frac{1}{\eps}G(W_1-W_s+x)} \frac{(W_1 - W_s)}{1-s}   \right]}{ \mathbb E\left[ e^{-\frac{1}{\eps}G(W_1-W_s+x)}   \right]}\bigg|_{x=X_s^{0,x_0;\theta_{\eps}^*}} ds + W_t, \quad t\in[0,1].
\end{equation}
Then using existing Monte-Carlo methods for SDEs (see, e.g., \cite{kloeden1992numerical}), 
we may simulate the optimal state process $X_t^{0,x_0;\theta_{\eps}^*}$. 
When $\eps$ is sufficiently small, the terminal value $X_1^{0,x_0;\theta_{\eps}^*}$ 
approximates a global minimizer of the original optimization problem \eqref{opt-prob}. 
The following theorem encapsulates the convergence analysis presented in Theorems \ref{thm-convergence} and \ref{thm-convergence-2}, providing an explicit error bound for the approximation of the global minimum.
\begin{thm}\label{thm-main-fDim}
Under certain assumptions on $G$, for each $\xi \in \arg\min_{x\in\bR^d}G(x)$, it holds that
    \begin{equation}
        0\leq \E\left[ V_{\eps}(0,x_0)\right]  - G(\xi) 
        \leq C \, \eps\, \ln\left(\frac{1}{\eps}\right)  ,  
    \end{equation}
    where the constant $C$ depends on dimension $d$, regularity of $G$, and $\mathbb E\left[|x_0-\xi|^2\right]$.
\end{thm}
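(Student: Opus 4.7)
The plan is to establish the two inequalities separately, the lower bound being essentially tautological while the upper bound follows from a Laplace-type estimate on the Feynman--Kac representation \eqref{value-function-representation}.

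For the lower bound, observe that for any deterministic $x\in\bR^d$ and any $\theta\in\Theta$, the penalty $\tfrac{\eps}{2}\int_0^1|\theta_s|^2\,ds$ is nonnegative, while $G(X_1^{0,x;\theta})\ge G(\xi)$ pointwise since $\xi$ is a global minimizer. Therefore $V_\eps(0,x)\ge G(\xi)$ for every $x\in\bR^d$, and the lower bound follows upon taking expectation over $x_0$.

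For the upper bound, the strategy is to use \eqref{value-function-representation}, which after the change of variable $z=x+y$ reads
\[
V_\eps(0,x) \;=\; -\eps\ln\int_{\bR^d} e^{-G(z)/\eps}\,(2\pi)^{-d/2}e^{-|z-x|^2/2}\,dz,
\]
and lower bound the integral by restricting it to a small ball $B(\xi,\delta)$ around the minimizer. Assuming $G$ is $L$-Lipschitz (the regularity input the constant $C$ absorbs), one has $G(z)\le G(\xi)+L\delta$ on $B(\xi,\delta)$, and since $|z-x|\le \delta+|x-\xi|$ there, the Gaussian density is bounded below by $(2\pi)^{-d/2}e^{-(\delta+|x-\xi|)^2/2}$. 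Combining these with $\mathrm{Vol}(B(\xi,\delta))=c_d\,\delta^d$ yields
\[
\int_{\bR^d} e^{-G(z)/\eps}(2\pi)^{-d/2}e^{-|z-x|^2/2}\,dz \;\ge\; c_d\,\delta^d\exp\!\Big(-\tfrac{G(\xi)+L\delta}{\eps}-\tfrac{1}{2}(\delta+|x-\xi|)^2\Big),
\]
so that, upon applying $-\eps\ln$,
\[
V_\eps(0,x) \;\le\; G(\xi)+L\delta+\tfrac{\eps}{2}(\delta+|x-\xi|)^2+d\eps\ln(1/\delta)+C_d\,\eps.
\]
Choosing $\delta=\eps$ balances the Lipschitz contribution $L\delta$ against the Laplace volume contribution $d\eps\ln(1/\delta)$. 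Taking expectation over $x_0$ and using $\E(\eps+|x_0-\xi|)^2\le 2\eps^2+2\E|x_0-\xi|^2$, every residual $O(\eps)$ term is dominated by $\eps\ln(1/\eps)$ on $(0,e^{-1})$, yielding the advertised bound with $C=C(d,L,\E|x_0-\xi|^2)$.

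The only delicate step is the calibration of the localization scale: a smaller $\delta$ inflates the volume correction $-\eps\ln\delta^d$, while a larger $\delta$ inflates the Lipschitz exponent $L\delta/\eps$, and the choice $\delta\asymp\eps$ is precisely what produces the logarithmic rate. The $d\eps\ln(1/\eps)$ rate is thus intrinsically tied to the Lipschitz regularity of $G$ and to the ambient dimension; under only Hölder regularity a slower convergence would emerge from exactly this balance.
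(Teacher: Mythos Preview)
Your argument is correct, but it proceeds along a genuinely different line from the paper. The paper never touches the Feynman--Kac formula for this estimate: instead it works directly at the level of the control problem by constructing an explicit suboptimal control from a Brownian bridge. Concretely, one drives the state along the bridge from $x_0$ to $\xi$ on $[0,1-\delta]$ using $\tilde\theta_t=\frac{\xi-B_t}{1-t}$, then switches the control off on $(1-\delta,1]$. The terminal error $I_1=\E[G(X_1)-G(\xi)]$ is of order $(d\delta)^{\alpha/2}$ by H\"older continuity and the Brownian increment over $[1-\delta,1]$, while the running cost $I_2=\tfrac{\eps}{2}\E\int_0^{1-\delta}|\tilde\theta_t|^2\,dt$ is computed exactly via It\^o isometry and equals $\tfrac{\eps}{2}(\E|x_0-\xi|^2(1-\delta)+d\ln(1/\delta))$. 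Balancing with $\delta=\eps^{2/\alpha}/d$ gives the rate. Your Laplace localization on the explicit representation is shorter and more elementary once \eqref{value-function-representation} is granted, but it leans on that representation (hence implicitly on boundedness-type hypotheses as in Assumption~\ref{assumption-G-global}), whereas the bridge argument works directly under the weaker local-H\"older-with-growth Assumption~\ref{assumption-G} and mirrors the Schr\"odinger-bridge construction used later for the measure-valued problem.

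One small correction to your closing remark: under $\alpha$-H\"older regularity the same balance with $\delta=\eps^{1/\alpha}$ gives $L\delta^\alpha=L\eps$ and $d\eps\ln(1/\delta)=\tfrac{d}{\alpha}\eps\ln(1/\eps)$, so the rate remains $\eps\ln(1/\eps)$ with only the constant affected --- it does not degrade.
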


 \subsection{Optimization over probability measures: $\mathcal X= \mathcal P_2(\bR^d)$} \label{section:opt-prob-measure}

Here we denote by $\Pcal$ the space of probability measures on $\bR^d$ with finite second moments, equipped with the $2$-Wasserstein metric.  
For simplicity, we take $\mathcal X = \Pcal$, and then  we restate the concerned minimization problem:
\begin{equation}
    \label{eq:global_min}
    \min_{\mu \in \Pcal} G(\mu),
\end{equation}
where $G: \Pcal \to \bR$ is a potentially non-convex and/or non-differentiable\footnote{Throughout this work, the differentiability, non-differentiability, derivatives, and convexity of functionals on probability measures are considered in the sense of Lions' derivatives and calculus on Wasserstein space.} functional, possibly including interaction terms.
Direct minimization of \eqref{eq:global_min} is difficult due to the infinite-dimensional and non-linear nature of the space and possible non-convexity and/or non-differentiability of the cost functional.
 Analogously, we formulate the following stochastic mean-field  control (MFC for short) problem:
\begin{equation}\label{reg-sto-control-prob-mf}
    \min_{\theta\in \Theta} G(\mathcal{L}(X_1))  + \mathbb E\left[   \frac{\eps}{2}\int_0^1 |\theta_t|^2 dt\right],
\end{equation}
subject to the controlled SDE \eqref{controlled-SDE}, where $\eps\in(0,\frac{1}{e})$ is a regularization parameter.


\subsubsection{The master equation and $N$-Particle Approximation}
For any square-integrable random variable $\zeta$, denote by $\cL(\zeta)$ its law/distribution. We define the cost functional 
 \begin{equation}\label{cost-functional-MF}
    J^{\theta}_{\eps}(t,\mathcal L (\xi)) =
    G(\mathcal{L}(X_1^{t,\xi;\theta}))  + 
     \mathbb  E\left[   \frac{\eps}{2}\int_t^1 |\theta_s|^2 ds  \right],
\end{equation}
and set the value function
\begin{equation}\label{value-function-MF}
    \mathcal V_{\eps}(t,\mu) = \inf_{\theta\in \Theta} J^{\theta}_{\eps}(t,\mu).
\end{equation}
Applying the dynamic programming principle yields (\cite{bensoussan2013mean,cardaliaguet2019master,carmona2015forward,carmona2018probabilistic}) the Master Equation, or so-called HJB equation on Wasserstein space:
\begin{equation}\label{Master-eq}
    \begin{cases}
    \displaystyle \partial_t \mathcal{V}_\eps - \frac{1}{2\eps} \int_{\bR^d} |\Lions \mathcal{V}_\eps|^2 d\mu + \frac{1}{2} \int_{\bR^d} \text{div}_x (\Lions \mathcal{V}_\eps) d\mu = 0, & (t, \mu) \in [0, 1) \times \Pcal \\
    \displaystyle \mathcal{V}_\eps(1, \mu) = G(\mu), & \mu \in \Pcal,
    \end{cases}
\end{equation}
where $\Lions \mathcal V_\eps$ denotes Lions' derivative. 
Solving this nonlinear equation directly is intractable; in particular the Cole-Hopf transformation working for finite-dimensional cases is not applicable herein.


To compute the solution, we approximate the measure $\mu$ by the empirical measure of $N$ particles $\mu^N = \frac{1}{N} \sum_{i=1}^N \delta_{x_i}$.
We define the $N$-particle terminal cost as the scaled total potential:
$$
U(\mathbf{x}) = N \cdot G\left(\frac{1}{N}\sum_{i=1}^N \delta_{x_i}\right).
$$
Let $v_{\eps}^N(t, \mathbf{x})$, for $(t, \mathbf{x})\in [0,1]\times \bR^{Nd}$, be the value function of the control problem: 
\begin{equation}\label{reg-sto-control-prob-mf-N}
    \min_{\theta^i\in \Theta; \, i=1,\dots,N}  \mathbb E\left[  U(\mathbf{X}_1)  + \frac{\eps}{2} \sum_{i=1}^N\int_0^1 |\theta^i_t|^2 dt\right],
\end{equation}
subject to the system of controlled SDEs:
\begin{equation}\label{eq:controlled_SDEs_n}
    X_t^i = x_0 + \int_0^t \theta^i_s ds + W_t^i, \quad t\in[0,1], \quad i=1,\dots,N,
\end{equation}
where $x_0\in\bR^d$ and $\{W^i\}_{i=1}^N$ are independent standard $d$-dimensional Brownian motions and the admissible control set $\Theta$ is 
consisting of addmissible controls $\theta$ that are adapted to the filtration generated by $\{W^i\}_{i=1}^N$.
Then this is nothing but a ${dN}$-dimensional version of the control problem \eqref{reg-sto-control-prob}. 
The value function $v_{\eps}^N$ satisfies the HJB equation:
\begin{align}
    \label{eq:particle_hjb}
    \partial_t v_{\eps}^N + \frac{1}{2} \sum_{i=1}^N \Delta_{x_i} v_{\eps}^N - \frac{1}{2\eps} \sum_{i=1}^N |\partial_{x_i} v_{\eps}^N|^2 = 0;
    \quad v_{\eps}^N(1, \mathbf{x}) = U(\mathbf{x}).    
\end{align}
Further, using the Cole-Hopf transformation and the Feynman-Kac formula for such finite-dimensional problems, 
we can linearize this PDE and obtain  an explicit probabilistic representation of the solution as well as of the optimal control, 
i.e.,  
\begin{align}
    v^N_{\eps}(t, \mathbf{x}) &= -\eps \ln\left(\E \left[ \exp\left( -\frac{1}{\eps} U(\mathbf{B}_1) \right) \Bigg| \mathbf{B}_t = \mathbf{x} \right]\right), \label{eq:feynman_kac_N} \\
    \theta_i^*(t, \mathbf{x}) &= \frac{\E \left[ e^{-U(\mathbf{B}_1)/\eps} \cdot \frac{B_1^i - x_i}{1-t} \Bigg| \mathbf{B}_t = \mathbf{x} \right]}{\E \left[ e^{-U(\mathbf{B}_1)/\eps} \Bigg| \mathbf{B}_t = \mathbf{x} \right]}, \quad i=1,\dots,N, 
    \label{optimal-control-particle}
\end{align}
where $\mathbf{B}_s = (B_s^1, \dots, B_s^N)$ represents a system of $N$ independent standard $d$-dimensional Brownian motions which are independent of $\{W^i\}_{i=1}^N$.


\subsubsection{Main result on convergence analysis as $\eps\to 0$ and $N\to \infty$}

Substituting the optimal control \eqref{optimal-control-particle} into the controlled SDEs \eqref{eq:controlled_SDEs_n}, we obtain the SDEs for the optimal state processes $\{X_t^{i,0,x_0;\theta_i^*}\}_{i=1}^N$ 
which at time 1 gives the approximation of the optimization problem \eqref{eq:global_min}.
We establish the convergence of the proposed SCM to the global minimizer of the functional $G$.
Our main theoretical result establishes that the value computed via the $N$-particle approximations converges to the true global minimum of $G$ as   $N \rightarrow \infty$ and $\eps \rightarrow 0$. 

\begin{thm}[Global Convergence of the Method]\label{thm:global_convergence}
Let $\mathcal V_0 =G(\mu^*)= \inf_{\mu} G(\mu)$. Under certain regularity assumptions of $G$, 
the normalized finite-particle value converges to the global minimum, i.e.,
\begin{equation}
    \left| \frac{1}{N} v^N_{\eps}(0, \mathbf{x}^0) - \mathcal V_0 \right| \le \underbrace{\frac{L_{\eps}}{2N}}_{\text{Particle Error}} 
    + \underbrace{C \cdot \eps \ln\left(\frac{1}{\eps}\right)}_{\text{Regularization Error}},
\end{equation}
where $\mathbf{x}^0=(x^0_1,\ldots,x^0_N)=(x_0,\ldots,x_0)\in\bR^{Nd}$,   
$L_{\eps}$ depends on $\eps$ and the regularity of $G$, and the constant $C$ depends on $d$, the H\"older norm and exponent of $G$, and the 2-Wasserstein distance ($\mathcal W_2(\delta_{x_0},\mu^*)$) between $\delta_{x_0}$ and $\mu^*$. 
\end{thm}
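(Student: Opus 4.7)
The plan is to split the error by the triangle inequality into a particle error and a regularization error,
\begin{equation*}
\left|\tfrac{1}{N}v^N_{\eps}(0,\mathbf{x}) - \mathcal{V}_0\right|
\;\le\;
\underbrace{\left|\tfrac{1}{N}v^N_{\eps}(0,\mathbf{x}) - \mathcal{V}_{\eps}(0,\delta_{x_0})\right|}_{\text{particle error}}
\;+\;
\underbrace{\left|\mathcal{V}_{\eps}(0,\delta_{x_0}) - \mathcal{V}_0\right|}_{\text{regularization error}},
\end{equation*}
and then to bound the two pieces separately. Both pieces will be controlled by methods that mimic, respectively, the finite-dimensional argument of Theorem~\ref{thm-main-fDim} (for the regularization error) and the projection of the master equation \eqref{Master-eq} onto the $N$-particle HJB \eqref{eq:particle_hjb} (for the particle error).

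For the regularization error, the lower bound $\mathcal{V}_{\eps}(0,\delta_{x_0})\ge \mathcal{V}_0$ is immediate since the quadratic penalty is nonnegative and $G(\mathcal{L}(X_1^{\theta}))\ge G(\mu^*)=\mathcal{V}_0$ for every admissible $\theta$. For the upper bound, I would construct a near-optimal admissible control by transport: pick $\xi$ with $\mathcal{L}(\xi)=\mu^*$ coupled optimally to $x_0$ so that $\E|x_0-\xi|^2 = \mathcal{W}_2^2(\delta_{x_0},\mu^*)$, and use a Brownian-bridge-type feedback that steers $X_t$ toward $\xi$ on $[0,1-\delta]$, followed by a freezing step on $[1-\delta,1]$ to keep the energy finite. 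Optimizing $\delta\sim \eps$ yields a quadratic cost of order $\eps\ln(1/\eps)$ while $G(\mathcal{L}(X_1))$ equals $G(\mu^*)=\mathcal{V}_0$ up to a Lipschitz term in $\mathcal W_2$ that is absorbed into the constant $C_2$ depending on $\mathcal W_2(\delta_{x_0},\mu^*)$.

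For the particle error, since $\tfrac{1}{N}U(\mathbf{x})=G(\mu^{N,\mathbf{x}})$, dividing \eqref{eq:particle_hjb} by $N$ yields an equation for $w^N_\eps:=\tfrac{1}{N}v^N_\eps$ that formally matches the master equation \eqref{Master-eq} evaluated at the empirical measure, up to a finite-$N$ correction coming from the finite-dimensional Laplacian versus the Lions/divergence term. Following the Cardaliaguet--Delarue--Lasry--Lions projection technique, the difference $w^N_\eps(t,\mathbf{x})-\mathcal{V}_\eps(t,\mu^{N,\mathbf{x}})$ satisfies a linear equation whose source is of order $1/N$, with coefficient depending on the regularity (up to second Lions derivatives) of $\mathcal{V}_\eps$; a Gr\"onwall/Feynman--Kac estimate then delivers the bound $L_\eps/(2N)$. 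As an alternative, one can compare the two explicit Cole--Hopf/Feynman--Kac representations (\eqref{value-function-representation} lifted to $N$ particles and the formal Wasserstein analogue) and estimate the log-ratio of the $N$-fold Gaussian expectations directly.

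The hard part will be the regularization-error bound: unlike the Euclidean case the master equation does not linearize under Cole--Hopf, so the upper bound cannot be extracted from a Laplace-method asymptotic of an explicit kernel and must be obtained by the explicit admissible-control construction above. A secondary technical difficulty is controlling $L_\eps$ in the particle error, since the Lions derivatives of $\mathcal{V}_\eps$ inherit the singular $1/\eps$ scaling of the Cole--Hopf drift \eqref{optimal-control-characterization}; the required regularity estimates will follow from the smoothing of the heat semigroup combined with the assumed Lipschitz/$C^{1}$ regularity of $G$ on $\mathcal P_2(\bR^d)$, at the price of an $\eps$-dependent constant that must be carried through the final bound.
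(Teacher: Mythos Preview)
Your decomposition via the triangle inequality and your treatment of the particle error (project the master-equation solution onto empirical measures, identify the $O(1/N)$ residual coming from the second Lions derivative $\partial^2_{\mu\mu}\mathcal V_\eps$, and conclude by a linear comparison/Feynman--Kac argument) match the paper's Theorem~\ref{thm:value_convergence} essentially line by line.

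The regularization-error step, however, has a genuine gap. Your control steers $X_t$ toward a \emph{random} target $\xi\sim\mu^*$, but the admissible set $\Theta$ consists of processes adapted to the filtration generated by $x_0$ and $W$; since here $x_0$ is deterministic (we are evaluating $\mathcal V_\eps(0,\delta_{x_0})$), $\mathcal F_0$ is trivial, an externally sampled $\xi$ is not $\mathcal F_t$-measurable, and the drift $(\xi-X_t)/(1-t)$ is not admissible. In the Euclidean Theorem~\ref{thm-convergence} this issue never arises because the bridge target $\xi\in\arg\min G$ is a deterministic point. One could try to repair your argument by manufacturing the target from the path itself (e.g.\ $\xi=T(W_\tau)$ for a transport map $T$ and a small $\tau>0$, then bridging on $[\tau,1-\delta]$), but that is not what you wrote and alters the estimates.

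The paper sidesteps admissibility altogether by first rewriting the running cost via Girsanov as $\eps\,\mathcal H(\mathbb Q^\theta\mid\mathbb P)$, and then invoking Schr\"odinger-bridge theory: the minimal-entropy control hitting a prescribed terminal marginal $\mu^\lambda:=\mu^*\!*\phi_\lambda$ is a genuine \emph{feedback} (F\"ollmer) drift, hence automatically adapted, and its entropic cost equals the static relative entropy $\mathcal H(\mu^\lambda\mid\gamma_{x_0})$ with $\gamma_{x_0}=\mathcal N(x_0,I_d)$. Bounding $G(\mu^\lambda)-G(\mu^*)\le L_G(d\lambda)^{\alpha/2}$ by H\"older continuity, and $\mathcal H(\mu^\lambda\mid\gamma_{x_0})\le C_1+\tfrac d2\ln(1/\lambda)$ via the fact that Gaussian convolution raises differential entropy, one then optimizes $\lambda=\eps^{2/\alpha}/d$ to obtain the $\eps\ln(1/\eps)$ rate. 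The smoothing step is essential because $\mu^*$ may be singular, in which case $\mathcal H(\mu^*\mid\gamma_{x_0})=+\infty$; your freezing step on $[1-\delta,1]$ plays a morally similar regularizing role, but only after the admissibility issue is resolved.
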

The above theorem summarizes the convergence results presented in Theorems \ref{thm:value_convergence} and \ref{thm:regularization_error}.
For clarity of exposition, we assume that all particles start from the same deterministic point $x_0\in\bR^d$.  
More generally, if the initial positions are sampled i.i.d.\ according to an arbitrary law $\mu_0\in\Pcal$, then the total approximation error contains an additional term that accounts for the discrepancy between $\mu_0$ and the empirical initial distribution; see Corollary~\ref{cor:general_measure}.  
Moreover, under the regularity, averaged convexity, and coercivity conditions of Proposition~\ref{prop-well-posedness-mfc}, the constant $L_{\eps}$ in the particle approximation estimate can be chosen uniformly in $\eps$; see Theorem~\ref{thm:value_convergence}.


\subsection{Literature, contributions, and organization of the paper}

\paragraph{Literature.} Optimization problem \eqref{opt-prob} is a central topic in applied mathematics and arises broadly in machine learning,
artificial intelligence, statistics, and operations research; see, e.g., \cite{bottou2018optimization}. 
Given the vast literature on both modeling and algorithms, we restrict attention to a few related directions. 
For general non-convex objectives, computing a global minimizer is generally NP-hard \cite{murty1987some}. 
Accordingly, a wide range of approximation methods have been developed for Euclidean optimization, including first-order schemes 
such as stochastic gradient descent and its variants \cite{robbins1951stochastic,nguyen2021unified,polyak1964some,tripuraneni2018stochastic}, 
as well as derivative-free (zeroth-order) approaches and metaheuristics, including particle swarm optimization \cite{grassi2023mean,kennedy1995particle}, consensus-based optimization \cite{carrillo2018analytical,pinnau2017consensus}, and simulated annealing \cite{kirkpatrick1983optimization,tang2022tail}. 
For a recent overview, we refer to \cite{danilova2022recent,velasco2024literature}.
Optimization over spaces of probability measures (e.g., $\Pcal$) differs fundamentally 
from Euclidean optimization, as the domain is infinite-dimensional and is naturally endowed with a nontrivial geometry, 
typically the Wasserstein geometry. 
Representative approaches include Wasserstein gradient-flow methods 
\cite{ambrosio2005gradient,bunne2022proximal,chizat2018global,hu2021mean,mei2018mean}, 
particle-based methods \cite{chen2023entropic,chizat2018global,nitanda2017stochastic}, 
and conditional-gradient (Frank--Wolfe) methods \cite{jaggi28projection}, among many others.

Stochastic control studies the optimization of controlled dynamical systems evolving under uncertainty, typically modeled by It\^o diffusions. It plays a central role in numerous application areas, including mathematical finance, engineering, reinforcement learning, operations research, and the biological and environmental sciences.  
Two principal analytical frameworks are commonly employed. The first is the Dynamic Programming Principle (DPP), which leads to Hamilton--Jacobi--Bellman (HJB) equations; the second is the stochastic Pontryagin Maximum Principle, which characterizes optimality through systems of forward--backward stochastic differential equations (FBSDEs); 
see, e.g., \cite{fleming2006controlled,YZ99}. An example is the Schrödinger Bridge problem \cite{schrodinger1931uber,chen2016relation}, which can be formulated as a stochastic control problem.
Within this broader field, Mean-Field Control (MFC) and Mean-Field Games (MFG) have emerged as particularly active research directions, motivated by large-population limits and the modeling of interacting multi-agent systems; comprehensive treatments can be found in \cite{cardaliaguet2019master,carmona2018probabilistic}.

\paragraph{Contribution.} In this work, we propose a stochastic control framework for solving the global optimization problem \eqref{opt-prob}, 
by reformulating it as the limit of a class of stochastic controls with regularization terms. 
This reformulation enables the application of dynamic programming principles and the Cole-Hopf transformation to linearize the associated HJB equations. 
The resulting linear PDEs can be solved using probabilistic methods, such as the Feynman-Kac formula, facilitating efficient computation of the value function and optimal control strategies. 
The main contributions of this paper are as follows:
\begin{itemize}
    \item Stochastic control frameworks are introduced for global optimization over $\bR^d$ and $\Pcal$, providing a new perspective on tackling non-convex non-differentiable optimization problems.
    \item For the Euclidean optimization, convergence results are established, demonstrating that values of the control problems converge to the global minimum of the objective functional in the rate of $\eps\ln\left(\frac{1}{\eps}\right)$ 
    as the regularization parameter $\eps$ approaches zero. Although this Euclidean optimization case may be viewed as a special case of the measure optimization, 
    the proofs and techniques are different, the obtained results lay foundation for the measure optimization, and  Euclidean optimization does not necessitate mean-field controls.
    \item The optimization over probability measures is approximated through mean-field control problems which are further approximated via $N$-particle systems (equivalently, $N$-player potential games). We
    establish rigorous convergence results, demonstrating that the value of the controlled finite-particle system converges to the global minimum of the objective functional at the rate of $\frac{L_{\eps}}{N} + C\eps\ln\left(\frac{1}{\eps}\right)$ as the regularization parameter $\eps$ approaches zero
     and the particle number $N$ tends to infinity.
    \item Based on the probabilistic representations of the value functions and optimal controls, we propose the numerical algorithms that are derivative-free due to the utilization of the Bismut-Elworthy-Li formula. Numerical experiments are conducted to validate the theoretical findings and demonstrate the effectiveness of the proposed methods. To reproduce the results, the code is available at\\
     \url{https://github.com/Jinnqiu/Stochastic-Control-Methods-for-Optimization}.
\end{itemize}

\paragraph{Organization of the paper.} The remainder of this paper is structured as follows. 
Section~\ref{section:opt-finite-dim} develops the stochastic control formulation for global optimization in Euclidean spaces, provides a detailed convergence analysis as the regularization parameter $\eps \to 0$, establishes the well-posedness of the associated control problems, and reports numerical experiments. 
Section~\ref{section:optimization-measure} extends the proposed framework to optimization over the Wasserstein space of probability measures, 
presenting the mean-field control formulation, deriving the corresponding convergence results (including the $N$-particle approximation), 
and illustrating the methodology through numerical studies. 
Then the conclusion in Section~\ref{sec:conclusion} summarizes the findings and discusses potential avenues for future research.
Finally, the proofs for Proposition \ref{prop-well-posedness-mfc} and Theorem \ref{thm:value_convergence} are provided in Appendix \ref{app:well-posedness_mfc} and Appendix \ref{app:particle_convergence}, respectively.


\section{Optimization over finite-dimensional spaces: $\mathcal X = \bR^d$}\label{section:opt-finite-dim}
For a matrix $A$, we use $|A|$ to denote its Frobenius/Hilbert-Schmidt norm, while by $|A|_{op}$ we denote its operator norm that is its largest singular value. 
\subsection{A convergence analysis when $\eps\to 0$}
To simplify the arguments and without any loss of generality, throughout this section, we use the following assumption on the objective function $G$ in this subsection.
\begin{ass}\label{assumption-G}
    \begin{enumerate}
        \item For the function $G:\mathbb{R}^d\to\mathbb{R}$, 
        there exist constants $L,p>0$ and $\alpha\in(0,1]$ such that
    \begin{equation}\label{assumption-G-eq}
         |G(x)-G(y)| \leq L(1+|x|^p+|y|^p)|x-y|^{\alpha}, \quad x,y\in\mathbb{R}^d.
    \end{equation}
    \item $E[|x_0|^{2p+2}]<\infty$.  
    \end{enumerate} 
\end{ass}
The local H\"older continuity condition, together with the imposed growth control, accommodates objective functions that may be non-differentiable and non-convex.

Recall the the Brownian bridge $B_t$ connecting $x_0$ and $\xi$ in $\mathbb R^d$:
    \begin{equation}
    B_t=x_0(1-t)+\xi t + (1-t)\int_0^t\frac{1}{1-s}dW_s, \quad t\in[0,t), \quad B_1=\xi.
    \end{equation}
\begin{lem}\label{lem-convergence}
    Let $B$ be a Brownian bridge from $x_0$ to $\xi\in\bR^d $ over the time interval $[0,1]$. Then for each $q\geq 1$, there exists constant $C_q>0$ such that
    \begin{align*}
        \mathbb E\left[ |B_t|^q \right] 
        &\leq C_q \left\{ 
            E\left[|x_0-\xi|^q\right](1-t)^q + d^{q/2}(1-t)^{q/2} t^{q/2} + |\xi|^q
            \right\},\\
        \mathbb E\left[ |B_t-B_1|^q \right] 
        &\leq C_q \left\{ 
            E\left[|x_0-\xi|^q\right](1-t)^q + d^{q/2}(1-t)^{q/2} t^{q/2}  
            \right\},
    \end{align*}
    where the constants $C_q>0$ depend only on $q$.
\end{lem}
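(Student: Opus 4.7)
The plan is to decompose $B_t$ into a deterministic linear interpolation plus the Itô integral part and bound the two pieces separately.

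First I would set $\mu_t := x_0(1-t)+\xi t$ and $Y_t := (1-t)\int_0^t \frac{1}{1-s}\,dW_s$, so that $B_t = \mu_t + Y_t$. The key algebraic observation for the second bound is that
\begin{equation*}
B_t - B_1 = \mu_t - \xi + Y_t = (1-t)(x_0-\xi) + Y_t,
\end{equation*}
which already isolates the two contributions cleanly. For the first bound, I would write $\mu_t = \xi + (1-t)(x_0-\xi)$, giving $|\mu_t| \le |\xi| + (1-t)|x_0-\xi|$. Using the elementary inequality $|a+b|^q \le C_q(|a|^q+|b|^q)$, both $E[|B_t|^q]$ and $E[|B_t-B_1|^q]$ reduce to controlling $E[|x_0-\xi|^q]$ (multiplied by $(1-t)^q$) together with $E[|Y_t|^q]$ (plus $|\xi|^q$ for the first bound).

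The remaining step is to estimate the Gaussian moment $E[|Y_t|^q]$. Since $Y_t$ is an $\bR^d$-valued centered Gaussian vector, its coordinates are independent with common variance
\begin{equation*}
(1-t)^2 \int_0^t \frac{ds}{(1-s)^2} = (1-t)^2 \cdot \frac{t}{1-t} = t(1-t),
\end{equation*}
so $Y_t$ is distributed like a centered Gaussian with covariance $t(1-t)I_d$. By standard Gaussian moment estimates (or a direct application of the BDG inequality to the martingale $\int_0^\cdot (1-s)^{-1}dW_s$ combined with Jensen), one gets $E[|Y_t|^q] \le C_q\, d^{q/2}\, (t(1-t))^{q/2}$, with $C_q$ depending only on $q$.

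Combining these ingredients and adjusting the constant $C_q$ at the end yields both inequalities. No step is genuinely delicate: the only minor point to get right is the variance computation for $Y_t$ (in particular, the cancellation $(1-t)^2 \cdot \frac{t}{1-t} = t(1-t)$), since a careless treatment would introduce an artificial blow-up as $t \to 1$, which would break the claimed bound.
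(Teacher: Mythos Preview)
Your proposal is correct and follows essentially the same route as the paper: both decompose $B_t-B_1$ as $(1-t)(x_0-\xi)$ plus the stochastic integral $Y_t=(1-t)\int_0^t(1-s)^{-1}\,dW_s$, bound the pieces separately via $|a+b|^q\le C_q(|a|^q+|b|^q)$, and use the quadratic-variation computation $\int_0^t(1-s)^{-2}\,ds=t/(1-t)$ (the paper phrases this through the BDG inequality rather than the explicit Gaussian law, but the content is identical). The first inequality is then obtained, as you do, by adding and subtracting $\xi$.
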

\begin{proof}
Applying Burkholder-Davis-Gundy inequality, we have
    \begin{align*}
        \mathbb E\left[
        |B_1-B_t|^q
        \right]
        &= \mathbb E\left[ \left|(x_0-\xi)(1-t) + (1-t)\int_0^t\frac{1}{1-s}dW_s\right|^q \right]
        \\
        &\leq C_q  \left\{ 
            E\left[|x_0-\xi|^q\right](1-t)^q + d^{q/2}(1-t)^q\left| \int_0^t\frac{1}{(1-s)^2}d s \right|^{q/2}
        \right\}
        \\
        &= C_q  \left\{ 
            E\left[|x_0-\xi|^q\right](1-t)^q + d^{q/2}(1-t)^{q/2} t^{q/2} 
            \right\},
    \end{align*}
    and thus, it follows that
    \begin{align*}
        \mathbb E\left[
        |B_t|^q
        \right] 
        &\leq 2^{q-1} \left( \mathbb E[|B_t-\xi|^q] + \mathbb E[|\xi|^q] \right)  \\
        &\leq C_q  \left\{ 
            E\left[|x_0-\xi|^q\right](1-t)^q + d^{q/2}(1-t)^{q/2} t^{q/2} + |\xi|^q
            \right\}.
    \end{align*}
\end{proof}

\begin{thm}\label{thm-convergence}
    Let Assumption \ref{assumption-G} hold. 
   Then, for each $\xi \in \arg\min_{x\in\bR^d}G(x)$, there exists an admissible control $\tilde \theta^{\eps} \in \Theta$ such that the corresponding state process $X^{0,x_0;\tilde\theta^{\eps}}$ satisfies
    \begin{align}
        0 & \leq \mathbb E\left[ G(X_1^{0,x_0;\tilde\theta^{\eps}})  + \frac{\eps}{2} \int_0^T |\tilde    \theta^{\eps}_t|^2 dt\right] - G(\xi) \leq C \, \eps\, \left\{d\ln d +d\ln\left(\frac{1}{\eps}\right)\right\} ,  \label{thm-est-value}
    \end{align}
    where the constant $C>0$ depends only on $p,\alpha,L,  \mathbb E\left[|x_0-\xi|^{2+2p}\right]$, and $|\xi|$. 
\end{thm}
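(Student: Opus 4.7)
The left inequality is immediate: for any admissible control the estimate $G(X_1)\ge G(\xi)$ holds almost surely (since $\xi$ is a global minimizer) and the running quadratic cost is non-negative. The task is therefore to exhibit a single $\tilde\theta^{\eps}\in\Theta$ whose total cost exceeds $G(\xi)$ by at most $C\eps\{d\ln d + d\ln(1/\eps)\}$. My plan is to use the Brownian-bridge feedback drift pointing to $\xi$, truncated just before the terminal time so as to avoid the logarithmic divergence of its $L^2$-energy at $t=1$; concretely, set
\[
\tilde\theta^{\eps}_t \;:=\; \frac{\xi - X_t^{0,x_0;\tilde\theta^{\eps}}}{1-t}\,\mathbf{1}_{[0,1-\delta]}(t),
\]
with $\delta=\delta(\eps)\in(0,1)$ to be optimized at the end. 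Under this feedback, on $[0,1-\delta]$ the closed-loop process $X_t$ is the Brownian bridge from $x_0$ to $\xi$, while on $[1-\delta,1]$ it evolves as $X_t = X_{1-\delta}+(W_t-W_{1-\delta})$.

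Next I would bound the two pieces of the cost separately. For the running part, the explicit representation $\xi-X_t=(\xi-x_0)(1-t)-(1-t)\int_0^t(1-s)^{-1}dW_s$ for $t\le 1-\delta$ gives $\E[|\tilde\theta^{\eps}_t|^2]=\E[|x_0-\xi|^2]+dt/(1-t)$, so that
\[
\frac{\eps}{2}\,\E\!\left[\int_0^1|\tilde\theta^{\eps}_t|^2\, dt\right]
\;\le\; \frac{\eps}{2}\,\bigl\{\E[|x_0-\xi|^2] + d\ln(1/\delta)\bigr\}.
\]
For the terminal part I would invoke Assumption~\ref{assumption-G}, writing $1+|X_1|^p+|\xi|^p \le C(1+|\xi|^p+|X_1-\xi|^p)$ and splitting the expectation into $(1+|\xi|^p)\E[|X_1-\xi|^\alpha]$ (handled by Jensen's inequality) plus $\E[|X_1-\xi|^{p+\alpha}]$ (handled by Lemma~\ref{lem-convergence} applied with $q\in\{\alpha,p+\alpha\}$, combined with the independent Brownian increment on $[1-\delta,1]$). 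In the regime $d\delta\le 1$ both pieces are dominated by $(d\delta)^{\alpha/2}$, producing
\[
\E[G(X_1)-G(\xi)]\;\le\; C\,(d\delta)^{\alpha/2},
\]
with $C$ depending only on $p,\alpha,L,|\xi|,\E[|x_0-\xi|^{2+2p}]$.

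The final step is to balance the two error terms by choosing $\delta := \eps^{2/\alpha}/d$, which lies in $(0,1/d)$ for $\eps$ small since $\eps<e^{-1}$. With this choice $(d\delta)^{\alpha/2}=\eps$ and $d\ln(1/\delta)=d\ln d+(2/\alpha)\,d\ln(1/\eps)$, which yields the stated bound. The main obstacle I anticipate is the moment/Hölder bookkeeping used for the terminal cost: one must prevent the polynomial-growth prefactor $(1+|X_1|^p+|\xi|^p)$ from injecting extra dimensional factors beyond those already present in $(d\delta)^{\alpha/2}$. Centering at $\xi$ together with the explicit Brownian-bridge moment estimates of Lemma~\ref{lem-convergence} confines this polynomial contribution to a $(d\delta)^{(p+\alpha)/2}$ term, which is dominated by $(d\delta)^{\alpha/2}$ precisely in the regime $d\delta\le 1$ targeted by the optimal $\delta$; this is the delicate step that ensures the constant $C$ in the statement is independent of $d$.
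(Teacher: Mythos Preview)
Your proposal is correct and follows essentially the same strategy as the paper: a Brownian-bridge drift toward $\xi$ truncated at time $1-\delta$, the same running-cost computation yielding $\tfrac{\eps}{2}\{\E[|x_0-\xi|^2]+d\ln(1/\delta)\}$, and the same choice $\delta=\eps^{2/\alpha}/d$. The only minor difference is in the terminal-cost bookkeeping: the paper separates the growth prefactor from the H\"older increment via H\"older's inequality with exponents $\tfrac{2}{2-\alpha}$ and $\tfrac{2}{\alpha}$, whereas you first center at $\xi$ (writing $1+|X_1|^p+|\xi|^p\le C(1+|\xi|^p+|X_1-\xi|^p)$) and then bound $\E[|X_1-\xi|^{\alpha}]$ and $\E[|X_1-\xi|^{p+\alpha}]$ separately; both routes produce the $(d\delta)^{\alpha/2}$ scaling with constants of the required dependence.
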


\begin{rmk} 
    Theorem \ref{thm-convergence} indicates that as the regularization parameter $\eps$ approaches zero, 
    the value of the control problem evaluated at the state process $X_1^{0,x_0;\tilde\theta^{\eps}}$ converges to the global minimum of $G$. 
    The convergence rate is at least of order $\eps \ln(1/\eps)$, which highlights the trade-off between the regularization strength and the accuracy of the approximation to the original optimization problem. Here, the function $G$ may have multiple minimizers. While the minimum value $G(\xi)$ does not depend on $\xi$,
    the constant $C$ depends on the choice of the minimizer $\xi$ because we use Brownian bridges in the proof but the choice of $\xi$ is arbitrary within the set of minimizers.
\end{rmk}

\begin{proof}

    Let 
    $B$ be a Brownian bridge from $x_0$ to $\xi$ over the time interval $[0,1]$ as in Lemma \ref{lem-convergence}. Then it satisfies the following stochastic differential equation:
    \begin{equation}
        dB_t = \frac{\xi-B_t}{1-t}dt + dW_t, \quad t\in[0,1), \quad B_0=x_0.
    \end{equation}
    Let $\delta \in (0,1)$ whose value is to be determined. Set $\bar J^{\tilde\theta}=\mathbb E\left[J_{\eps}^{\tilde\theta^{\eps}}(0,x_0)\right]$ with the control process $\tilde\theta^{\eps}_t = \frac{\xi-B_t}{1-t}$ for $t\in[0,1-\delta]$ and $\tilde\theta^{\eps}_t=0$ for $t\in(1-\delta,1]$. 
    Then we have 
    $$
    X^{0,x_0;\tilde{\theta}^{\eps}}_t 
    = B_{t \wedge (1-\delta)} +  W_{t\vee(1-\delta)} - W_{1-\delta} , \quad t\in[0,1],
    $$
    and thus, it holds that
    \begin{align*}
        0 
        \leq \bar J^{\tilde\theta} - G(\xi)
        &= \mathbb E\left[ G(X_1^{0,x_0;\tilde\theta^{\eps}})-G(\xi) + \frac{\eps}{2}\int_0^1 |\tilde\theta^{\eps}_t|^2 dt\right] \\
        &= \mathbb E\left[ G(W_1 - W_{1-\delta} + B_{1-\delta})   -G(B_1)
         + \frac{\eps}{2} \int_0^{1-\delta} \left|\frac{\xi-B_t}{1-t}\right|^2 dt\right] \\
        & =  I_1 + I_2,
    \end{align*}
    where
    \begin{align*}
        I_1 & = \mathbb E\left[ G(W_1 - W_{1-\delta} + B_{1-\delta})   -G(B_1) \right], \\
        I_2 & = \frac{\eps}{2} \mathbb E\left[ \int_0^{1-\delta} \left|\frac{\xi-B_t}{1-t}\right|^2 dt\right].
    \end{align*}

    By the local H\"older continuity in Assumption \ref{assumption-G}, we have
    \begin{align*}
        I_1 & \leq L \mathbb E\left[ (1+|W_1 - W_{1-\delta} + B_{1-\delta}|^p + |B_1|^p) |W_1 - W_{1-\delta} + B_{1-\delta}-B_1|^{\alpha} \right] .
    \end{align*}
    Further, using Jensen's inequality and Lemma \ref{lem-convergence} gives
    \begin{align*} 
        I_1 
       &\leq L C \left(\mathbb E\left[1+|W_1-W_{1-\delta}|^{\frac{2p}{2-\alpha}} + |B_{1-\delta}|^{\frac{2p}{2-\alpha}} + |\xi|^{\frac{2p}{2-\alpha}} \right]\right)^{\frac{2-\alpha}{2}} 
       \left( \mathbb E\left[|W_1 - W_{1-\delta}|^2+|B_{1-\delta}-B_1|^2\right]\right)^{\frac{\alpha}{2}} \\  
       &\leq LC \left(1 + (d\delta)^{\frac{p}{2}} +|\xi|^p + \left(E[|x_0-\xi|^{2+2p}]\right)^{\frac{p}{2+2p}}  
        \right) \left(|d\delta|^{\frac{\alpha}{2}} + |d\delta (1-\delta)|^{\frac{\alpha}{2}}
        +\delta^{\alpha}\left(E[|x_0-\xi|^2]\right)^{\frac{\alpha}{2}}\right),
    \end{align*}
    where $C>0$ is a constant depending only on $p$ and $\alpha$.
    For $I_2$, basic calculation and It\^o isometry yield that
    \begin{align*}
        I_2 
        & = \frac{\eps}{2} \mathbb E\left[ \int_0^{1-\delta} \left|\frac{(\xi - x_0)(1-t) - (1-t)\int_0^t\frac{1}{1-s}dW_s}{1-t}\right|^2 dt\right] \\
        & = \frac{\eps}{2} \mathbb E\left[ |x_0-\xi|^2 (1-\delta) + d\int_0^{1-\delta} \int_0^t \frac{1}{(1-s)^2} ds dt \right] \\
        &\leq \frac{\eps}{2}  \left( E[|x_0-\xi|^2] + d\ln\frac{1}{\delta} \right).
    \end{align*}
    Taking $\delta=\eps^{\frac{2}{\alpha}}d^{-1}$ and combining the estimates of $I_1$ and $I_2$, we obtain
    \begin{align*}
        0 \leq  \bar J^{\tilde\theta^{\eps}} - G(\xi) 
        \leq C \, \eps\, \left\{d\ln d +d\ln\left(\frac{1}{\eps}\right)\right\}   \, ,
    \end{align*}  
    for a constant $C>0$ depending only on $p,\alpha,L,|\xi|$, and  $\mathbb E\left[|x_0-\xi|^{2+2p}\right]$.
   
\end{proof}

\begin{cor}\label{cor-convergence}
    Let $\xi$ be the unique minimizer of $G$.  In Theorem \ref{thm-convergence}, assume further that there exist $\nu, \eta >0$ such that
    \begin{align}
        G(x) - \min_{x\in\bR^d}G(x) \geq \nu |x-\xi|^{\eta}, \quad x\in\bR^d.      \label{bdd-frombelow-growth}  
    \end{align}
    Then we have
    \begin{equation}
        \mathbb E\left[ |X_1^{0,x_0;\tilde\theta^{\eps}}-\xi|^{\eta} \right] \leq C \, \eps\, \left\{d\ln d +d\ln\left(\frac{1}{\eps}\right)\right\}   
    \end{equation}
    where the constant $C>0$ depends only on $p,\alpha,L,\nu,  \mathbb E\left[|x_0-\xi|^{2+2p}\right]$, and $|\xi|$. 
\end{cor}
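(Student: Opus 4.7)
The plan is to deduce the corollary directly from Theorem \ref{thm-convergence} by combining the value estimate with the coercivity hypothesis \eqref{bdd-frombelow-growth}. The key observation is that the quadratic regularization term in \eqref{thm-est-value} is nonnegative, so discarding it yields an upper bound on the expected suboptimality of the terminal state alone.

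More precisely, first I would note that by Theorem \ref{thm-convergence}, with $\tilde\theta^{\eps}$ the admissible control constructed there,
\begin{equation*}
    \mathbb E\!\left[ G(X_1^{0,x_0;\tilde\theta^{\eps}}) - G(\xi) \right]
    \leq \mathbb E\!\left[ G(X_1^{0,x_0;\tilde\theta^{\eps}}) + \frac{\eps}{2}\int_0^1 |\tilde\theta^{\eps}_t|^2 dt \right] - G(\xi)
    \leq C \eps \left\{ d\ln d + d\ln\!\left(\tfrac{1}{\eps}\right)\right\}.
\end{equation*}
Next, since $\xi$ is the global minimizer, the growth condition \eqref{bdd-frombelow-growth} applied pointwise to $X_1^{0,x_0;\tilde\theta^{\eps}}(\omega)$ yields
$\nu |X_1^{0,x_0;\tilde\theta^{\eps}} - \xi|^{\eta} \leq G(X_1^{0,x_0;\tilde\theta^{\eps}}) - G(\xi)$ almost surely. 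Taking expectations and dividing by $\nu$, I obtain the desired inequality with a constant $C$ depending on $p,\alpha,L,\nu,|\xi|$, and $\mathbb E[|x_0-\xi|^{2+2p}]$.

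There is essentially no obstacle here: the argument is a one-line consequence of Theorem \ref{thm-convergence} once the coercivity lower bound is invoked. The only thing to be careful about is that the lower bound \eqref{bdd-frombelow-growth} must be applied to the random terminal state and that the constant in the final estimate absorbs the factor $1/\nu$; no further probabilistic or analytic input (e.g., uniform integrability or additional moment bounds) is needed beyond what Theorem \ref{thm-convergence} already provides.
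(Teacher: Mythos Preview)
Your proposal is correct and follows exactly the same approach as the paper, which states in one line that the result follows directly from Theorem~\ref{thm-convergence} together with the growth condition~\eqref{bdd-frombelow-growth}. Your write-up simply spells out this direct implication in detail.
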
   
\begin{proof}
    The result follows directly from Theorem \ref{thm-convergence} and the additional growth condition \eqref{bdd-frombelow-growth}.
\end{proof} 
The global growth condition \eqref{bdd-frombelow-growth} in Corollary \ref{cor-convergence} may be restrictive in some applications.  The following corollary relaxes this condition to a local one combined with a uniform gap outside a ball.
\begin{cor}\label{cor-convergence-2}
    Let $\xi$ be the unique minimizer of $G$.  In Theorem \ref{thm-convergence}, assume further that there exist constants $\nu, \eta, R, \mathcal E >0$ such that
    \begin{align*}
        G(x) - \min_{x\in\bR^d}G(x) \geq \nu |x-\xi|^{\eta}, \quad \text{for } |x-\xi|\leq R;    \quad 
        G(x)-  G(\xi)\geq \mathcal E, \quad \text{for } |x-\xi|> R. 
    \end{align*}
    Then we have for any $\gamma>0$,
    \begin{equation}
           \mathbb P\left(|\xi-X_1^{0,x_0;\tilde\theta^{\eps}}|>\gamma\right) \leq \frac{C}{\min\{\nu \gamma^{\eta}, \mathcal E\}} \, \eps\, \left\{d\ln d +d\ln\left(\frac{1}{\eps}\right)\right\} ,  
    \end{equation}
    where the constant $C>0$ depends only on $p,\alpha,L, \nu, \mathbb E\left[|x_0-\xi|^{2+2p}\right]$, and $|\xi|$.    
\end{cor}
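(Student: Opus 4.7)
My plan is to combine a Markov-type inequality with a two-region case analysis driven by the new mixed growth hypothesis, using the value estimate from Theorem \ref{thm-convergence} as the single analytic input.

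The first step is to record the consequence of Theorem \ref{thm-convergence}. Since $\xi$ is the global minimizer, the penalization term $\tfrac{\eps}{2}\int_0^1|\tilde\theta^{\eps}_t|^2\,dt$ is nonnegative, so \eqref{thm-est-value} immediately yields
\begin{equation*}
0 \le \mathbb E\bigl[G(X_1^{0,x_0;\tilde\theta^{\eps}}) - G(\xi)\bigr] \le C\,\eps\,\{d\ln d + d\ln(1/\eps)\}.
\end{equation*}
This is the only quantitative ingredient needed; everything else is deterministic bookkeeping.

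Next I would establish a pointwise lower bound of the form
\begin{equation*}
\bigl(G(X_1^{0,x_0;\tilde\theta^{\eps}}) - G(\xi)\bigr)\,\mathbf{1}_{\{|X_1^{0,x_0;\tilde\theta^{\eps}} - \xi| > \gamma\}} \ge \min\{\nu\gamma^{\eta},\mathcal E\}\,\mathbf{1}_{\{|X_1^{0,x_0;\tilde\theta^{\eps}} - \xi| > \gamma\}}.
\end{equation*}
To see this, on the event $\{|X_1^{0,x_0;\tilde\theta^{\eps}} - \xi|>\gamma\}$ I split into the sub-events $\{\gamma < |X_1^{0,x_0;\tilde\theta^{\eps}} - \xi|\le R\}$ and $\{|X_1^{0,x_0;\tilde\theta^{\eps}} - \xi| > R\}$. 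On the former the local growth assumption gives $G(X_1^{0,x_0;\tilde\theta^{\eps}}) - G(\xi) \ge \nu\gamma^{\eta}$ (this set is empty when $\gamma>R$, in which case the decomposition is trivial), while on the latter the uniform gap assumption gives $G(X_1^{0,x_0;\tilde\theta^{\eps}}) - G(\xi)\ge \mathcal E$. Taking the minimum handles both cases simultaneously.

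Finally, I would take expectations, use $G \ge G(\xi)$ to drop the indicator on the right-hand side, and invoke the Markov-type bound:
\begin{equation*}
\min\{\nu\gamma^{\eta},\mathcal E\}\,\mathbb P(|X_1^{0,x_0;\tilde\theta^{\eps}} - \xi| > \gamma) \le \mathbb E\bigl[G(X_1^{0,x_0;\tilde\theta^{\eps}}) - G(\xi)\bigr] \le C\,\eps\,\{d\ln d + d\ln(1/\eps)\},
\end{equation*}
and divide through by $\min\{\nu\gamma^{\eta},\mathcal E\}$. There is no substantive obstacle; the only subtlety is to notice that $\gamma$ may exceed $R$, which would make the first of the two sub-events empty, and to verify that the inequality still reads cleanly as $\min\{\nu\gamma^{\eta},\mathcal E\}$ (in this regime the bound is simply controlled by $\mathcal E$, and the minimum is automatically $\le \mathcal E$, so the stated estimate remains valid).
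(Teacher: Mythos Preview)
Your proposal is correct and follows essentially the same approach as the paper: both arguments invoke the value estimate \eqref{thm-est-value} from Theorem~\ref{thm-convergence}, split the event $\{|X_1^{0,x_0;\tilde\theta^{\eps}}-\xi|>\gamma\}$ according to whether the distance exceeds $R$, and apply a Markov-type inequality using the local growth and uniform gap hypotheses. Your version is slightly more streamlined in that you establish the pointwise lower bound $G(X_1)-G(\xi)\ge\min\{\nu\gamma^{\eta},\mathcal E\}$ on the event first and then apply Markov once, whereas the paper applies Markov separately on each sub-event and sums; the difference is cosmetic.
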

\begin{proof}
    We just use the estimate in Theorem \ref{thm-convergence}, when $\gamma <R$, we have
    \begin{align*}
        \mathbb P\left(|\xi-X_1^{0,x_0;\tilde\theta^{\eps}}|>\gamma\right) 
        &= \mathbb P\left(|\xi-X_1^{0,x_0;\tilde\theta^{\eps}}|>R\right) + \mathbb P\left(\gamma<|\xi-X_1^{0,x_0;\tilde\theta^{\eps}}|\leq R\right) \\
        &\leq \mathbb P\left(G(X_1^{0,x_0;\tilde\theta^{\eps}}) -G(\xi)>\mathcal E \right)
        + \mathbb P\left(\nu \gamma^{\eta} \leq  G(X_1^{0,x_0;\tilde\theta^{\eps}}) -G(\xi)   \right) \\  
        &\leq \frac{1}{\mathcal E} \mathbb E\left[ G(X_1^{0,x_0;\tilde\theta^{\eps}}) - G(\xi) \right] + \frac{1}{\nu \gamma^{\eta}} \mathbb E\left[ G(X_1^{0,x_0;\tilde\theta^{\eps}}) - G(\xi) \right]\\
        &\leq \frac{C}{\min\{\nu \gamma^{\eta}, \mathcal E\}} \, \eps\, \left\{d\ln d +d\ln\left(\frac{1}{\eps}\right)\right\} , 
    \end{align*}
    where the last inequality follows from Theorem \ref{thm-convergence}.  When $\gamma \geq R$, the same estimate holds obviously.
\end{proof}

In Theorem \ref{thm-convergence} and Corollaries \ref{cor-convergence}--\ref{cor-convergence-2}, the results are still holding if we replace the admissible control $\tilde\theta^{\eps}$ by the optimal control $\theta_{\eps}^*$ if it exists.  
Also note that $\tilde\theta^{\eps}$ is constructed based on the Brownian bridge from $x_0$ to a minimizer $\xi$ of $G$. Although in practice the minimizer $\xi$ is unknown, these results still provide theoretical guarantees for the performance of the regularized stochastic control approach in approximating the global minimum of $G$ as $\eps$ approaches zero.       
In the subsequent section, we will establish the well-posedness of the regularized stochastic control problem \eqref{reg-sto-control-prob} to ensure the existence and characterization of the optimal control $\theta_{\eps}^*$ based on which we propose the numerical approximating schemes.

\subsection{Well-posedness of the stochastic control problems}\label{section:well-posedness}
For simplicity, we assume the following in this subsection.
\begin{ass}\label{assumption-G-global}
    \begin{enumerate}
        \item For the function $G:\mathbb{R}^d\to\mathbb{R}$, there exist constants $L,M>0$ and $\alpha\in(0,1]$ such that
    \begin{equation}\label{eq-G-global}
       \sup_{x\in\bR^d} |G(x)| \leq M
         \quad \text{and}\quad
         |G(x)-G(y)| \leq L |x-y|^{\alpha}, \quad \forall\,x,y\in\mathbb{R}^d.
    \end{equation}
    \item $E[|x_0|^{2}]<\infty$.  
    \end{enumerate} 
\end{ass}

The boundedness assumption does not lose much generality since we can always consider a composition with a bounded function to $G$ (e.g., $\tan^{-1}{G}$) without affecting the minimization problem.
The global H\"older continuity and the square integrability of $x_0$ correspond to the case $p=0$ in Assumption \ref{assumption-G} and they are set to avoid cumbersome arguments, which helps us focus 
on theoretical analysis without loss of much generality.

Let us start with the Feynman-Kac representation \eqref{FK-representation} and rewrite it into an integral form:
\begin{align}
    u(t,x) 
    = \mathbb E\left[ e^{-\frac{1}{\eps}G(W_1-W_t+x)}  \right]
    &= \int_{\bR^d}e^{-\frac{G(x+y)}{\eps}}\frac{e^{-\frac{y^2}{2(1-t)}}}{(2\pi (1-t))^{d/2}} dy.
\end{align}
 Then it is easy to check that $u\in C([0,1]\times \bR^d)\cap C^2([0,1)\times \bR^d)$ satisfying the heat equation \eqref{PDE-u} (see, e.g., \cite{karatzas1998brownian}).  In fact, it is straightforward to see the boundedness: 
 \begin{align}
    e^{-\frac{M}{\eps}} \leq u(t,x) \leq 1, \quad \forall\,(t,x)\in[0,1]\times \bR^d. \label{bounded-u}
 \end{align}
For the gradient, using the convolution representation (change of variable $z=x+y$) we obtain for each $(t,x)\in[0,1)\times \bR^d$,
\begin{align}
    \partial_x u(t,x)   
    &= \partial_x \int_{\bR^d} e^{-\frac{G(z)}{\eps}} \frac{e^{-\frac{|z-x|^2}{2(1-t)}}}{(2\pi(1-t))^{d/2}}\,dz
    = \int_{\bR^d} e^{-\frac{G(z)}{\eps}} \frac{z-x}{1-t}\frac{e^{-\frac{|z-x|^2}{2(1-t)}}}{(2\pi(1-t))^{d/2}}\,dz \nonumber\\
    &= \frac{1}{1-t}\mathbb E\!\left[ e^{-\frac{1}{\eps}G(W_1-W_t+x)}(W_1-W_t)\right], \label{representation-gradient}
\end{align}
where the last identity uses the Gaussian law of $W_1-W_t$. Note $|e^{-x}-e^{-y}|\leq |x-y|$ for all $x,y\geq 0$.
Thus, we have
\begin{align}
    |\partial_x u(t,x)| 
    &= \left|\frac{1}{1-t}\mathbb E\!\left[ \left(e^{-\frac{1}{\eps}G(W_1-W_t+x)}-e^{-\frac{1}{\eps}G(x)}\right)(W_1-W_t)\right]\right| \nonumber\\
    &\leq \frac{1}{1-t}\mathbb E\!\left[ \frac{1}{\eps}|G(W_1-W_t+x)-G(x)| |W_1-W_t|\right] \nonumber \\
    &\leq \frac{L}{\eps(1-t)}\mathbb E\!\left[ |W_1-W_t|^{1+\alpha}\right], \label{bound-gradient-1}
\end{align}
where we used the H\"older continuity of $G$. Consequently, by Jensen's inequality, we obtain
 \begin{align}
     |\partial_x u(t,x)| \le \frac{L(d(1-t))^{\frac{1+\alpha}{2}}}{\eps(1-t)} = \frac{Ld^{\frac{1+\alpha}{2}}}{\eps (1-t)^{\frac{1-\alpha}{2}}}.\label{bounded-1st-derivative}
 \end{align}
 In particular, the feedback control in \eqref{optimal-control-in-terms-of-V} written in terms of $u$,
 \begin{align}
     \theta^*_\eps(t,x)=\frac{\partial_x u(t,x)}{u(t,x)},
 \end{align}
 is well defined and continuous on $[0,1)\times\bR^d$, and satisfies the bound
 \begin{align}
     \sup_{x\in\bR^d}|\theta^*_\eps(t,x)| \le  \frac{Ld^{\frac{1+\alpha}{2}}e^{\frac{M}{\eps}}}{\eps (1-t)^{\frac{1-\alpha}{2}}}.
     \label{bound-optimal-control}
 \end{align}
Analogously, for the second derivatives, we have 
 \begin{align}
     \partial_{xx}^2 u(t,x) 
     &= \int_{\bR^d} e^{-\frac{G(z)}{\eps}} \partial^2_{xx} \left( \frac{e^{-\frac{|z-x|^2}{2(1-t)}}}{(2\pi(1-t))^{d/2}} \right) dz \nonumber \\
     &= \int_{\bR^d} e^{-\frac{G(z)}{\eps}} \left( \frac{(z-x)(z-x)^T}{(1-t)^2} - \frac{I_d}{1-t} \right) \frac{e^{-\frac{|z-x|^2}{2(1-t)}}}{(2\pi(1-t))^{d/2}} dz \nonumber \\
     &= \mathbb{E}\left[ e^{-\frac{1}{\eps}G(W_1-W_t+x)} \left( \frac{(W_1-W_t)(W_1-W_t)^T}{(1-t)^2} - \frac{I_d}{1-t} \right) \right].
 \end{align}
 Analogous to \eqref{bound-gradient-1}, using the H\"older continuity of \( G \), we obtain
 \begin{align}
     |\partial_{xx}^2 u(t,x)| &\leq \frac{L}{\eps(1-t)^2} \mathbb E\left[ |W_1-W_t|^{2+\alpha} \right] + \frac{L}{\eps (1-t)} \mathbb E\left[ |W_1-W_t|^{\alpha} \right] \nonumber \\
     &\leq \frac{CL d^{\frac{2+\alpha}{2}}}{\eps(1-t)^{1-\frac{\alpha}{2}}}, \label{bound-2nd}
 \end{align}
 which together with \eqref{bounded-u} and \eqref{bounded-1st-derivative}  implies that
 \begin{align}
     \sup_{x\in\bR^d}|\partial_x \theta^*_\eps(t,x)| 
     &= \sup_{x\in\bR^d}  \left|\frac{\partial_{xx}^2 u(t,x)}{u(t,x)}-\frac{\partial_x u(t,x) \partial_x ^T u(t,x)}{(u(t,x))^2}\right| \nonumber \\
     &\leq C(L,d,\alpha) \left(\frac{e^{\frac{M}{\eps}}}{\eps(1-t)^{1-\frac{\alpha}{2}}}-\frac{e^{\frac{2M}{\eps}}}{\eps^2 (1-t)^{1-\alpha}} \right).
     \label{bound-optimal-control-gradient}
 \end{align}
Therefore, by the estimates \eqref{bound-optimal-control} and \eqref{bound-optimal-control-gradient}, the standard SDE theory (see, e.g., \cite{karatzas1998brownian}) implies that the SDE \eqref{closed-loop-SDE} admits a unique strong solution on interval $[0,1-\delta]$ for any $\delta \in (0,1)$. In particular, 
due to the boundedness of the feedback control $\theta_{\eps}^*$ in \eqref{bound-optimal-control}, we have 
\begin{align}
\int_0^{1-\delta}\left|\theta_{\eps}^*(t,X^{0,x_0;\theta_{\eps}^*}_t)\right|^2dt 
&\leq 
\left(\frac{L d^{\frac{1+\alpha}{2}} e^{\frac{M}{\eps}}}{\eps}\right)^2
\int_0^{1-\delta} (1-t)^{-(1-\alpha)} \,dt
\nonumber\\
&= \left(\frac{L d^{\frac{1+\alpha}{2}} e^{\frac{M}{\eps}}}{\eps}\right)^2 \frac{1-\delta^{\alpha}}{\alpha}
\nonumber\\
&\le \frac{L^2 d^{1+\alpha} e^{\frac{2M}{\eps}}}{\alpha \eps^2},\quad \text{a.s.} 
\end{align}
Therefore, the unique strong solution may be extended to the whole interval $[0,1]$ and we are ready to present the well-posedness result.

\begin{thm}\label{thm-well-posedness}
Under Assumption \ref{assumption-G-global}, the feedback control $\theta^*_\eps$ defined in \eqref{optimal-control-characterization}  is the optimal control for problem \eqref{reg-sto-control-prob}.
\end{thm}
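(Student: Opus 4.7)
The strategy is a standard verification argument that leverages all the regularity of $V_\eps$ and $u$ assembled just before the statement. First I would record that $u$ is bounded, with $e^{-M/\eps}\le u\le 1$ by \eqref{bounded-u}, smooth on $[0,1)\times\bR^d$ with the explicit derivative bounds \eqref{bounded-1st-derivative} and \eqref{bound-2nd}, and jointly continuous on $[0,1]\times\bR^d$ with $u(1,x)=e^{-G(x)/\eps}$; the latter follows from dominated convergence applied to the Gaussian convolution representation, since $G$ is bounded and continuous. Consequently $V_\eps=-\eps\ln u$ lies in $C([0,1]\times\bR^d)\cap C^{1,2}([0,1)\times\bR^d)$ with $V_\eps(1,\cdot)=G$, and substituting $\partial_x V_\eps=-\eps\,\partial_x u/u$, $\Delta V_\eps=-\eps\Delta u/u+|\partial_x u|^2/u^2$ into the heat equation \eqref{PDE-u} shows that $V_\eps$ is a classical solution of the HJB equation \eqref{HJB-eqn} on $[0,1)\times\bR^d$.

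Next, for an arbitrary $\theta\in\Theta$ and any $\delta\in(0,1)$, I would apply It\^o's formula to $V_\eps(t,X_t^{0,x_0;\theta})$ on $[0,1-\delta]$. The derivative bounds on that strip make the stochastic integral a true martingale, so after taking expectations the drift term reads
\begin{align*}
\mathbb E\bigl[V_\eps(1-\delta,X_{1-\delta}^{0,x_0;\theta})\bigr]-V_\eps(0,x_0)=\mathbb E\!\int_0^{1-\delta}\bigl(\partial_t V_\eps+\tfrac12\Delta V_\eps+\theta_s\!\cdot\partial_x V_\eps\bigr)(s,X_s^{0,x_0;\theta})\,ds.
\end{align*}
Invoking the HJB identity to rewrite $\partial_t V_\eps+\tfrac12\Delta V_\eps$ as $\tfrac{1}{2\eps}|\partial_x V_\eps|^2$ and then completing the square in $\theta$ gives the pointwise algebraic identity
\begin{align*}
\partial_t V_\eps+\tfrac12\Delta V_\eps+\theta\!\cdot\partial_x V_\eps+\tfrac{\eps}{2}|\theta|^2=\tfrac{\eps}{2}\bigl|\theta+\tfrac{1}{\eps}\partial_x V_\eps\bigr|^2\ge 0,
\end{align*}
with equality if and only if $\theta=-\eps^{-1}\partial_x V_\eps=\theta^*_\eps$.

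Rearranging yields $V_\eps(0,x_0)\le \mathbb E[V_\eps(1-\delta,X_{1-\delta}^{0,x_0;\theta})]+\mathbb E\!\int_0^{1-\delta}\!\tfrac{\eps}{2}|\theta_s|^2\,ds$ for every $\theta\in\Theta$, with equality when $\theta=\theta^*_\eps$. To conclude, I would send $\delta\to 0$: boundedness $|V_\eps|\le M$ together with the continuity of $V_\eps$ up to $t=1$ licenses dominated convergence on the first right-hand term so that it tends to $\mathbb E[G(X_1^{0,x_0;\theta})]$, while monotone convergence handles the quadratic cost, giving $V_\eps(0,x_0)\le J^{\theta}_\eps(0,x_0)$. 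The same chain of identities, now applied to $\theta^*_\eps$ (admissible on $[0,1]$ thanks to the well-posedness of the closed-loop SDE and the $L^2$-bound $\int_0^1|\theta^*_\eps|^2 dt\le L^2 d^{1+\alpha}e^{2M/\eps}/(\alpha\eps^2)$ established before the statement), produces $V_\eps(0,x_0)=J^{\theta^*_\eps}_\eps(0,x_0)$, establishing optimality. The principal technical obstacle is the degeneracy of the derivative estimates as $t\uparrow 1$; this is precisely what the localization on $[0,1-\delta]$ is designed to circumvent, and the uniform lower bound $u\ge e^{-M/\eps}$ together with $|G|\le M$ makes the final limit passage clean without any additional integrability machinery.
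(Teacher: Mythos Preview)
Your proposal is correct and follows essentially the same verification argument as the paper: both proofs apply It\^o's formula to $V_\eps$ along an arbitrary controlled trajectory on a truncated interval $[0,1-\delta]$ (the paper writes this as $\tau\in[t,1)$), exploit the boundedness of $\partial_x V_\eps$ on that strip to kill the stochastic integral, complete the square using the HJB identity, and then pass to the limit $\delta\to 0$ via dominated convergence to obtain $V_\eps\le J^\theta_\eps$ for all $\theta$ with equality at $\theta^*_\eps$. The only cosmetic differences are that the paper carries out the computation at a generic initial point $(t,x)$ rather than $(0,x_0)$, and phrases the square-completion as an upper bound on the drift rather than as a nonnegative remainder, but these are the same calculation.
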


\begin{proof} 
It is straightforward to check that the function $V_{\eps}(t,x)$ defined through the (inverse) Cole-Hopf transformation in \eqref{value-function-representation} is continuous on $[0,1]\times \mathbb R^d$ and is a classical solution to the HJB equation \eqref{HJB-eqn} over $[0,1)\times \mathbb R^d$. 
Denote by $\bar V_{\eps}(t,x)$ the value function defined in \eqref{value-function}.
Below, we shall verify that $V_{\eps}$ in \eqref{value-function-representation} coincides with $\bar V_{\eps}$ and that  $\theta^*_\eps$ defined in \eqref{optimal-control-characterization}  is the optimal control.
For each $(t,x)\in[0,1)\times \bR^d$ and control $\theta\in \Theta$, applying It\^o formula gives that for all $\tau\in[t,1)$,
\begin{align}
    V_{\eps}(t,x) 
    &= \mathbb E\left[V_{\eps}(\tau, X_{\tau}^{t,x;\theta}) -\int_t^{\tau}\left(\partial_s V_{\eps}
    +\frac{1}{2}\Delta V_{\eps} +\theta \cdot \partial_x V_{\eps} \right)(s,X_s^{t,x;\theta})ds
    -\int_t^{\tau}\partial_x V_{\eps}(s,X_s^{t,x;\theta}) dW_s \right]\nonumber\\
    & = \mathbb E\left[V_{\eps}(\tau, X_{\tau}^{t,x;\theta}) -\int_t^{\tau}\left(\partial_s V_{\eps}
    +\frac{1}{2}\Delta V_{\eps} +\theta \cdot \partial_x V_{\eps}  \right)(s,X_s^{t,x;\theta})ds
     \right],\label{verification-ito}
\end{align}
where we have used the boundedness of $\partial_x V_{\eps}$ by \eqref{optimal-control-characterization} and \eqref{bound-optimal-control} for the mean-zero stochastic integral.
Recalling the HJB equation \eqref{HJB-eqn}, we notice that
\begin{align*}
    \partial_s V_{\eps}
    +\frac{1}{2}\Delta V_{\eps} +\theta \cdot \partial_x V_{\eps}  
    & = \partial_s V_{\eps}
    +\frac{1}{2}\Delta V_{\eps}  -\frac{|\partial_x V_{\eps}|^2}{2\eps} + \frac{\eps}{2}\left|\theta+\frac{\partial_x V_{\eps}}{\eps}\right|^2
    - \frac{\eps}{2} |\theta|^2\\
    &= \frac{\eps}{2}\left|\theta+\frac{\partial_x V_{\eps}}{\eps}\right|^2
    - \frac{\eps}{2} |\theta|^2\\
    &\leq 
    - \frac{\eps}{2} |\theta|^2.
\end{align*}
Thus, continuing \eqref{verification-ito} yields that
\begin{align}
    V_{\eps}(t,x) 
    &\leq \mathbb E\left[V_{\eps}(\tau, X_{\tau}^{t,x;\theta}) +\frac{\eps}{2}\int_t^{\tau} |\theta_s|^2ds \right]. \label{ineq-ito}
\end{align}
Letting $\tau$ tend to $1$, by the dominated convergence theorem and the continuity of $V_{\eps}$, we obtain
\begin{align}
    V_{\eps}(t,x) 
    &\leq \mathbb E\left[V_{\eps}(1, X_{1}^{t,x;\theta}) +\frac{\eps}{2}\int_t^{1} |\theta_s|^2ds \right]
    \nonumber\\
    & =\mathbb E\left[G( X_{1}^{t,x;\theta}) +\frac{\eps}{2}\int_t^{1} |\theta_s|^2ds \right]\nonumber\\
    &= J^{\theta}_{\eps}(t,x),
\end{align}
which by the arbitrariness of $\theta$ implies that 
\begin{equation}\label{verification-leq}
    V_{\eps}(t,x)\leq \bar V_{\eps}(t,x).
\end{equation}

On the other hand, applying It\^o formula to $V_{\eps}(s,X_{s}^{t,x;\theta_{\eps}^*})$ with the feedback control $\theta_{\eps}^*$ defined in \eqref{optimal-control-characterization}, we may have the equality in \eqref{ineq-ito}
and letting $\tau$ tend to $1$ yields that 
$$
V_{\eps}(t,x)=J^{\theta_{\eps}^*}_{\eps}(t,x) \geq \bar V_{\eps}(t,x),
$$
which together with \eqref{verification-leq} implies that $V_{\eps}(t,x)=\bar V_{\eps}(t,x)$ for all $(t,x)\in [0,1]\times \bR^d$ and that $\theta_{\eps}^*$ is the optimal feedback control.
\end{proof}

Under Assumption \ref{assumption-G-global}, we may have a finer convergence analysis as in Theorem \ref{thm-convergence}. Indeed, in view of the fact that for each $\theta\in\Theta$,
\begin{align*}
    0\leq \mathbb E\left[G(X_1^{0,x_0;\theta_{\eps}^*})\right] -\min_{x\in\bR^d} G(x)
    \leq \mathbb E\left[J^{\theta_{\eps}^*}_{\eps}(0,x_0)\right] - \min_{x\in\bR^d} G(x) 
    \leq \mathbb E \left[J_{\eps}^{\theta}(0,x_0)\right]- \min_{x\in\bR^d} G(x), 
\end{align*}
we may repeat the proof of Theorem \ref{thm-convergence} to obtain the following result.

\begin{thm}\label{thm-convergence-2}
Under Assumption \ref{assumption-G-global}, for each $\xi \in \arg\min_{x\in\bR^d}G(x)$, it holds that
    \begin{equation}
        0 \leq \mathbb E\left[ G(X_1^{0,x_0;\theta_{\eps}^*})  + \frac{\eps}{2} \int_0^T |\theta_{\eps}^*(t,X_t^{0,x_0;\theta_{\eps}^*})|^2 dt\right] - G(\xi) 
        \leq C \, \eps\, \left\{d\ln d +d\ln\left(\frac{1}{\eps}\right)\right\} ,  
    \end{equation}
    where the constant $C>0$ depends only on $\alpha,L$, and $  \mathbb E\left[|x_0-\xi|^2\right]$.
\end{thm}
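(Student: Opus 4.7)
The plan is to chain three previously established facts. First, the well-posedness result in Theorem \ref{thm-well-posedness} guarantees that $\theta_\eps^*$ is an optimal feedback control, so $V_\eps(0,x_0)$ coincides with the cost functional evaluated at $\theta_\eps^*$. Second, by the minimality of the value function, $V_\eps(0,x_0)\leq J_\eps^\theta(0,x_0)$ for any admissible $\theta\in\Theta$. Third, plugging in the Brownian-bridge-based admissible control $\tilde\theta^\eps$ from the proof of Theorem \ref{thm-convergence}, and noting that Assumption \ref{assumption-G-global} is the $p=0$ specialization of Assumption \ref{assumption-G} (with a \emph{global} H\"older estimate), yields the claimed bound.

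More concretely, the lower bound $\geq 0$ is immediate since $G(X_1^{0,x_0;\theta_\eps^*})\geq G(\xi)$ pointwise and the running cost is nonnegative. For the upper bound, using the identity $V_\eps(0,x_0)=J_\eps^{\theta_\eps^*}(0,x_0)$ from Theorem \ref{thm-well-posedness} together with the independence of $x_0$ from the driving Brownian motion, one has
$$\mathbb E\bigl[V_\eps(0,x_0)\bigr] \;=\; \mathbb E\left[G(X_1^{0,x_0;\theta_\eps^*}) + \frac{\eps}{2}\int_0^1 \bigl|\theta_\eps^*(t,X_t^{0,x_0;\theta_\eps^*})\bigr|^2\,dt\right].$$
Optimality of $\theta_\eps^*$ then gives $\mathbb E[V_\eps(0,x_0)] \leq \mathbb E[J_\eps^{\tilde\theta^\eps}(0,x_0)]=\bar J^{\tilde\theta^\eps}$, and the bound $\bar J^{\tilde\theta^\eps}-G(\xi)\leq C\eps\{d\ln d + d\ln(1/\eps)\}$ follows by replaying the proof of Theorem \ref{thm-convergence} in the case $p=0$.

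The only item requiring care is the verification that the constant in Theorem \ref{thm-convergence} simplifies correctly at $p=0$: the factor $\bigl(1+(d\delta)^{p/2}+|\xi|^p+(\mathbb E[|x_0-\xi|^{2+2p}])^{p/(2+2p)}\bigr)$ appearing in the estimate of $I_1$ collapses to a universal constant, while the estimate of $I_2$ is unchanged. The resulting constant therefore depends only on $\alpha,L$, and $\mathbb E[|x_0-\xi|^2]$, as asserted. No genuinely new obstacle arises; the boundedness hypothesis $|G|\leq M$ in Assumption \ref{assumption-G-global} is invoked only to secure existence of the optimal feedback $\theta_\eps^*$ via Theorem \ref{thm-well-posedness}, not for the convergence rate itself, which remains driven entirely by the bridge construction of $\tilde\theta^\eps$.
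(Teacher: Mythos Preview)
Your proposal is correct and follows essentially the same approach as the paper: the paper likewise chains the optimality inequality $\mathbb E[J_\eps^{\theta_\eps^*}(0,x_0)]\leq \mathbb E[J_\eps^{\theta}(0,x_0)]$ for arbitrary $\theta\in\Theta$ with the Brownian-bridge estimate from Theorem~\ref{thm-convergence}, and notes that under the global H\"older condition the dependence on $|\xi|$ disappears (exactly your observation that the $|\xi|^p$ factor collapses when $p=0$).
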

The proof is so analogous to that of Theorem \ref{thm-convergence} that it is omitted. Nevertheless, we note that for the constant $C$ herein, the dependence on $|\xi|$ is waived due to the global H\"older continuity \eqref{eq-G-global}.

\subsection{Numerical approximations  with examples}\label{subsection:numerics_finite_dim}
In view of Theorems \ref{thm-convergence} and \ref{thm-convergence-2}, 
we see that the regularized optimal control problem \eqref{reg-sto-control-prob} 
can approximate the optimization problem \eqref{opt-prob} with an error rate of order $\eps\, \ln\left(\frac{1}{\eps}\right)$.
This result provides a theoretical guarantee for the accuracy of numerical approximations based on the regularized stochastic control approach. 
Then it remains to solve the regularized optimal control problem \eqref{reg-sto-control-prob} for which 
there are two streams of methods Pontryagin's maximum principle and Bellman's dynamic programming principles (refer to, e.g., \cite{YZ99}). The former gives the necessary 
conditions in terms of forward-backward SDEs for the optimal control and state, while the latter resorts to the nonlinear HJB equation whose sufficiently regular solution yields 
both the value function and the optimal (feedback) control. From the numerical point of view, numerical approaches for approximating FBSDEs include 
time discretization schemes such as the Euler method, regression-based Monte Carlo algorithms, and more recently, 
deep learning-based methods that leverage neural networks for high-dimensional FBSDEs, while the existing numerical methods  for HJB equations include
finite difference methods, finite elements methods, policy/value function iterations, and recently the deep learning-based BSDE methods; 
refer to \cite{forsyth2007numerical,han2018solving,han2025brief,hure2020deep,kharroubi2013numerical,mceneaney2007curse,witte2011penalty} among many others. 
Some of these methods are curse-of-dimensionality free. In particular, the deep BSDE method introduced in \cite{han2018solving} is exactly applied to the HJB equation like \eqref{HJB-eqn} and it is shown that 
the curse of dimensionality is overcome.

In this work, we investigate the method based on simulating the SDE for the optimal state \eqref{closed-loop-SDE}. 
Rewrite the SDE \eqref{closed-loop-SDE} for the optimal state process: for $t\in[0,1]$,
\begin{equation*}
    X_t^{0,x_0;\theta_{\eps}^*} = x_0 
    + \int_0^t \frac{1}{1-s}\left( \beta(s,X_s^{0,x_0;\theta_{\eps}^*}) - X_s^{0,x_0;\theta_{\eps}^*} \right) ds + W_t,
\end{equation*} 
where 
$$\beta(s,x) =  \frac{\mathbb E\left[ e^{-\frac{1}{\eps}G(W_1-W_s+x)}(W_1 - W_s+x)  \right]}{ \mathbb E\left[ e^{-\frac{1}{\eps}G(W_1-W_s+x)}   \right]} .$$
This SDE does not have explicit solution and we may use the Euler-Maruyama scheme to approximate.
Given a time step size $\Delta t = 1/M$ for some integer $M>0$, we define the time grid $t_k = k \Delta t$ for $k=0,1,\ldots,M$. The Euler-Maruyama scheme for \eqref{closed-loop-SDE} is given by
\begin{align}\label{Euler-scheme-1}
    X_0=x_0,
    \quad
    X_{t_{k+1}}  
    = X_{t_k} 
    +  \frac{1}{1-t_k}\left( \beta(t_k,X_{t_k}) - X_{t_k}    \right) 
    \Delta t 
    + \sqrt{\Delta t} \, Z_{k+1}, 
\end{align}
where $\{ Z_k \}_{k=1}^M$ are independent standard normal random variables in \(\mathbb{R}^d\). 
For each time step,  we need to evaluate $\beta(t_k, X_{t_k})$, which by \eqref{optimal-control-characterization} may be approximately computed as  
\begin{align}
    \beta\!\left(t_k,X_{t_k}\right)
    &= \frac{\mathbb E\!\left[ \exp\!\left(-\frac{1}{\eps}G( \sqrt{1-t_k} \xi+x)\right)\,( \sqrt{1-t_k} \xi+x) \right]}
    {\mathbb E\!\left[ \exp\!\left(-\frac{1}{\eps}G(\sqrt{1-t_k} \xi+x)\right) \right]}
    \bigg|_{x=X_{t_k}}\nonumber \\
    &\approx
    \sum_{i=1}^S   \frac{\exp\!\left(-\frac{1}{\eps}G(\sqrt{1-t_k} \xi_i+X_{t_k})\right) ( \sqrt{1-t_k} \xi_i+X_{t_k})}
    {
    \sum_{i=1}^S \exp\!\left(-\frac{1}{\eps}G(\sqrt{1-t_k} \xi_i+X_{t_k })\right)} 
    \, , \label{beta-finite-dim}
\end{align}
where $\xi$ is a standard normal random variable in \(\mathbb{R}^d\) independent of other random variables, $\{\xi_i\}_{i=1}^S$ are independent samples of $\xi$, 
and $S$ is the sample size.   We may simulate
multiple ($N$) independent trajectories of the SDE \eqref{closed-loop-SDE} and take  the mean  of the terminal states as the approximation of the global minimizer of $G$.
To enhance the exploration of the state space, we may iterate the above procedure multiple times, 
each time initializing the particles with a convex combination of the terminal states from the previous iteration and their mean. 
Two considerations motivate the coupling step in which the next initialization is taken as a convex combination of the terminal particle locations and their empirical mean.
First, by the law of large numbers, as $N\to\infty$ the empirical mean of terminal states converges to the mean of the optimally controlled state, i.e.,
$\frac1N\sum_{i=1}^N X^{(i)}_1 \to \E[X^{0,x_0;\theta^*_\eps}_1]$,
so that the empirical mean provides a statistically stable proxy for the optimizer recovered by \eqref{closed-loop-SDE}.
Second, the empirical mean aggregates the terminal location information across particles and thereby induces an explicit coupling among trajectories, which can improve exploration and reduce sensitivity to individual sample paths.

More precisely, writing $m_s := \E\!\left[X_s^{0,x_0;\theta_{\eps}^*}\right]$ and taking expectations in \eqref{closed-loop-SDE} yields the deterministic evolution
\begin{align*}
    \frac{d}{ds} \E\left[X_s^{0,x_0;\theta_{\eps}^*}\right] 
    &= \frac{1}{1-s }\left( \E\left[\beta(s,X_s^{0,x_0;\theta_{\eps}^*})\right] - \E\left[X_s^{0,x_0;\theta_{\eps}^*}\right] \right) \\
    &= \frac{1}{1-s }\left(  \overline X^{\eps}_s - \E\left[X_s^{0,x_0;\theta_{\eps}^*}\right] \right), 
\end{align*}
where 
$$ \overline X^{\eps}_s = \frac{\mathbb E\left[ e^{-\frac{1}{\eps}G(W_1-W_s+X_s^{0,x_0;\theta_{\eps}^*})}(W_1 - W_s+X_s^{0,x_0;\theta_{\eps}^*})  \right]}{ \mathbb E\left[ e^{-\frac{1}{\eps}G(W_1-W_s+X_s^{0,x_0;\theta_{\eps}^*})}   \right]},$$
which incorporates the position information of other particles, and by Laplacian principle concentrates around the global minimizer of $G$ as $\eps$ approaches zero.
Therefore, the iteration step combining terminal states and their mean can be viewed as a coupling mechanism among particles to enhance exploration and accelerate convergence to the global minimizer.

The pseudocode is presented in Algorithm \ref{alg:particle_method}, which we refer to as the Stochastic Control Method for Global Optimization on $\bR^d$. 
The output $\bar{x}^*$ of Algorithm \ref{alg:particle_method} serves as the approximation of the global minimizer of $G$,  and its accuracy is mainly guaranteed by Theorems \ref{thm-convergence} and \ref{thm-convergence-2}.

\begin{algorithm}[H]
\caption{Stochastic Control Method for Global Optimization on $\bR^d$}
\label{alg:particle_method}
\begin{algorithmic}[1]
\State \textbf{Parameters:} Dimension $d$, Particles $N$, Time steps $M$, Horizon $T$, Regularization $\eps$, MC samples $S$, Iterations $L$, Coupling $\lambda$.
\State \textbf{Initialize:} $X_0^{(i)} \leftarrow x_{\text{start}}$ for $i=1,\dots,N$, \State $\Delta t \leftarrow T/(M-1)$
\For{iteration $l = 1$ to $L$}
    \State Sample $Z_1, \dots, Z_S \sim \mathcal{N}(0, I_d)$
    \For{step $m = 0$ to $M-1$}
        \State $t \leftarrow m \Delta t$, $\tau \leftarrow T - t$
        \For{particle $i = 1$ to $N$}
             \State Compute weights $w_j \leftarrow \exp\left(-\frac{G(X_t^{(i)} + \sqrt{\tau} Z_j) - \min_k G(X_t^{(i)} + \sqrt{\tau} Z_k)}{\eps}\right)$
            \State Estimate expectation $E \leftarrow \frac{\sum_{j=1}^K w_j (X_t^{(i)} + \sqrt{\tau} Z_j)}{\sum_{j=1}^K w_j}$
            \State Compute drift $\hat{\theta} \leftarrow \frac{E - X_t^{(i)}}{\tau}$
            \State Step: $X_{t+\Delta t}^{(i)} \leftarrow X_t^{(i)} + \hat{\theta} \Delta t + \sqrt{\Delta t} \xi^{(i)}$, with $\xi^{(i)} \sim \mathcal{N}(0, I_d)$
        \EndFor
    \EndFor
    \State \textbf{Update Start:} $X_0^{(i)} \leftarrow \lambda \left(\frac{1}{N}\sum X_T^{(j)}\right) + (1-\lambda) X_T^{(i)}$  if $l < L$
\EndFor
\State \textbf{Output:} $\bar{x}^* \approx \frac{1}{N} \sum_{i=1}^N X_T^{(i)}$
\end{algorithmic}
\end{algorithm}

To illustrate the proposed method, we present two standard benchmark problems from global optimization.

\begin{exam}[Xin-She Yang 4 Function]
The Xin-She Yang 4 function is a \textit{non-convex, non-differentiable} function defined as:
\begin{equation*}
g(x) = \left( \sum_{i=1}^d \sin^2(x_i) - \exp\left(-\sum_{i=1}^d x_i^2\right) \right) \exp\left(-\sum_{i=1}^d \sin^2\sqrt{|x_i|}\right).
\end{equation*}
The global minimum is located at the origin $x^*=(0,\dots,0)$ with value $g(x^*)=-1$.

The simulation parameters are set as follows: dimension $d=1$, number of particles $N=20$, time horizon $T=1.0$, and number of time steps $M=4001$.
The regularization parameter is set to $\eps=10^{-300}$, and we use $S=800$ Monte Carlo samples for the drift estimation.
The initial state for all particles is set to $x_0 = 2$, and the mean-field coupling weight (not used) is $\lambda=0.5$. 
The algorithm is run for 1 iteration. The final mean vector: $\bar{X}_T \approx 0.00167204$ 
(the right end point of the black line in Figure \ref{fig:yang_trajectories}).
\begin{figure}[htbp]
    \centering
    \includegraphics[width=0.8\textwidth]{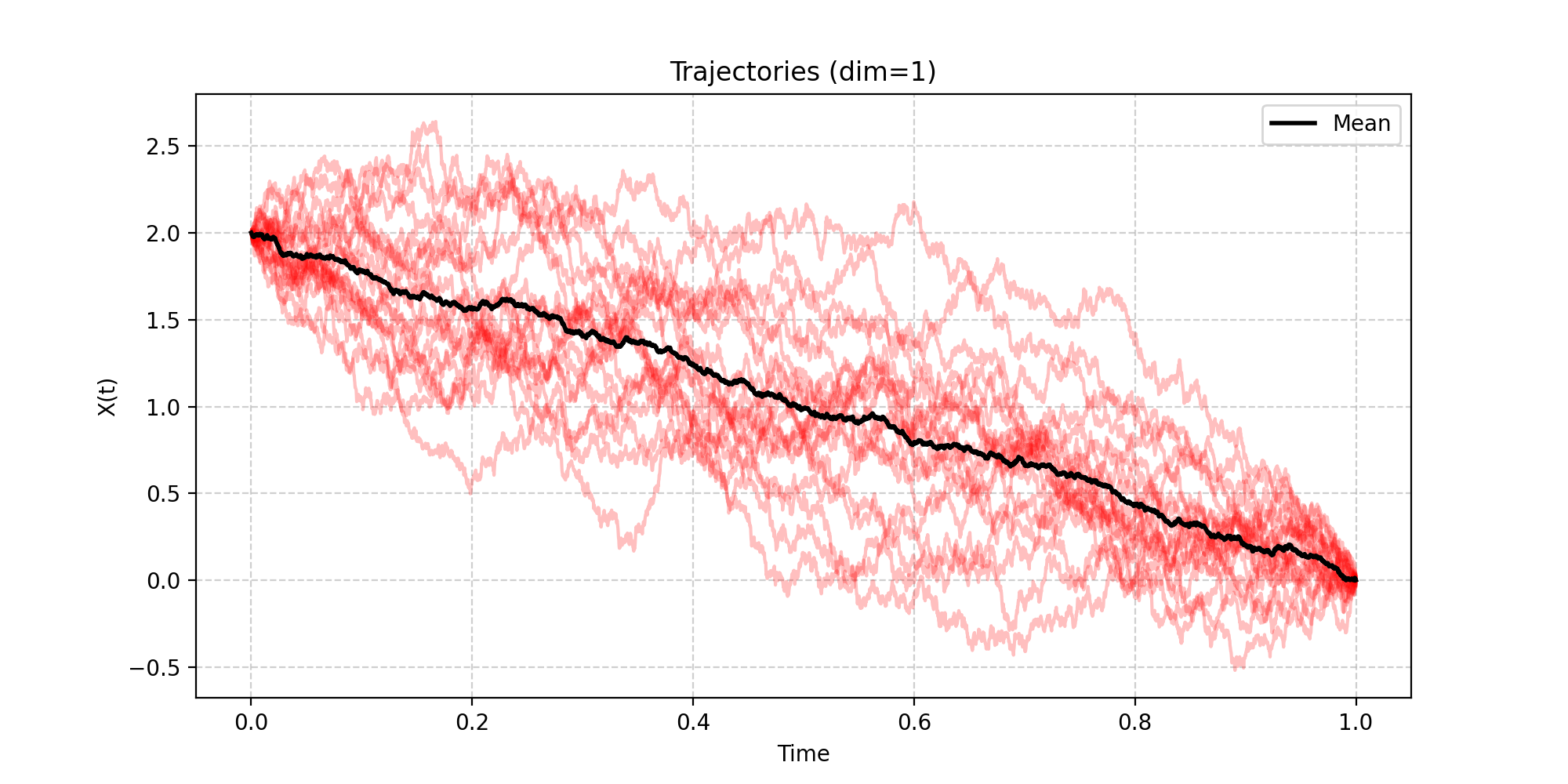}
    \caption{Trajectories of 20 particles for the Xin-She Yang 4 function optimization. The particles converge to the global minimum at the origin.}
    \label{fig:yang_trajectories}
\end{figure}

To corroborate the convergence analysis in Theorem \ref{thm-convergence}, we use 1000 particles, set $x_0=1$, and take 21 values of $\eps$ evenly placed between $0.0005$ and $0.12$, and plot the 
the corresponding value error (Monte Carlo estimate of $V_{\eps}(0,x_0)$ minus $G(0)$) against $\eps$ and $-\eps\ln(\eps)$, 
respectively, in Figure \ref{fig:yang_convergence}.
\begin{figure}[htbp]
    \centering
    \includegraphics[width=0.95\textwidth]{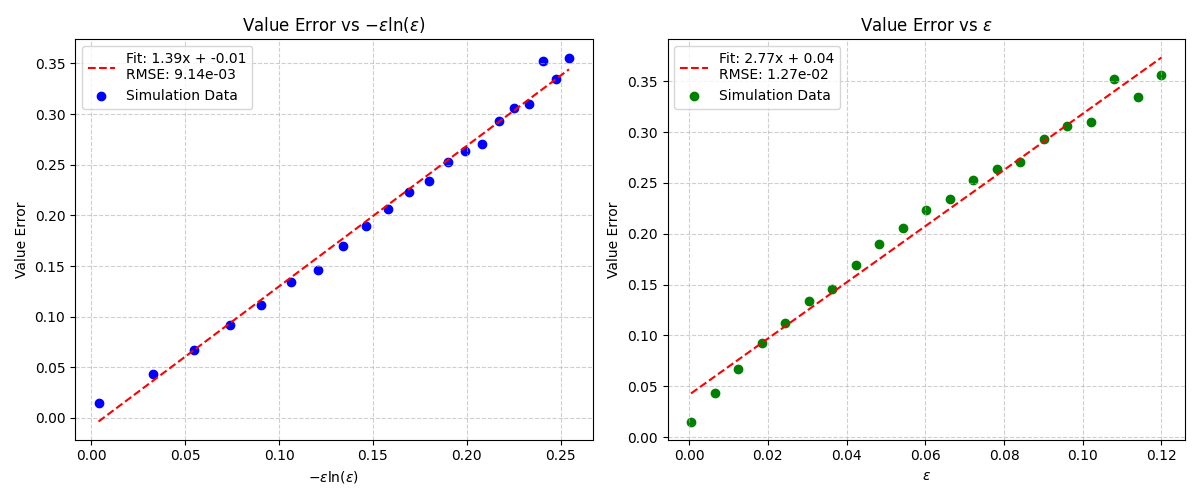}
    \caption{Convergence of the Stochastic Control Method for the Xin-She Yang 4 function optimization. The left panel shows the value error against $-\eps\ln(\eps)$, while the right panel shows the value error against $\eps$. 
    The linear trend in the left panel provides a better fit, as indicated by the lower RMSE, corroborating the theoretical convergence rate established in Theorem \ref{thm-convergence-2}.}
    \label{fig:yang_convergence}    
\end{figure}

\end{exam}

The following example illustrates the applicability of the proposed method to high-dimensional, non-convex optimization problems.
\begin{exam}[20D Ackley Function]
The Ackley function is a typical benchmark for testing optimization algorithms due to its large number of local minima:
\begin{equation*}
g(x) = -20 \exp\left(-0.2 \sqrt{\frac{1}{d} \sum_{i=1}^d x_i^2}\right) - \exp\left(\frac{1}{d} \sum_{i=1}^d \cos(2\pi x_i)\right) + 20 + e.
\end{equation*}
The global minimum is at $x^*=(0,\dots,0)$ with $g(x^*)=0$.
In this experiment, we consider a $20$-dimensional instance of the Ackley function and set the number of particles to $N=1000$. 
The simulation horizon is $T=1.0$, discretized with $M=2001$ time steps, and the mean-field coupling weight is $\lambda=0.75$. 
We choose the regularization parameter $\eps=10^{-300}$ and use $S=1000$ Monte Carlo samples at each time step to estimate the drift term.
All particles are initialized at $x_0=(5,\dots,5)^\top$, and the algorithm is executed for $10$ outer iterations.
At termination, the empirical mean of the particle system is
\begin{align*}
    \bar{X}_T \approx\, & [ 0.00677943,\, -0.02596362,\,  0.02645172,\, -0.01391329,\,  0.00043321,\, -0.00798279,\\
    &\quad  -0.02765826,\, -0.00654247,\, -0.00301501,\,  0.00804032,\,  0.01024422,\,   0.01322995,\\
 &\quad -0.03054515,\,  0.01779264,\, -0.00474688,\, -0.00306365,\, -0.02286298,\,  -0.00381099,\\
 &\quad 0.00962246,\,  0.00973099],
\end{align*}
which is close to the global minimizer at the origin.
\end{exam}

It is worth noting that as our primary emphasis is on the theoretical development and analysis of the proposed SCM for global optimization,
the numerical experiments above are presented solely for illustrative purposes, 
and the reported parameter choices are not optimized 
for accuracy or computational efficiency. In practical implementations, in addition to key parameters such as the regularization level $\eps$, 
the particle number $N$, the Monte Carlo sample size $S$, the number of outer iterations $L$, 
the coupling weight $\lambda$, and the time discretization level $M$, one may further tune the time horizon $T$, 
employ more sophisticated Monte Carlo estimators or variance-reduction techniques, 
and introduce suitable scalings of the driving Wiener process $W$. 
A comprehensive numerical investigation, including systematic parameter calibration and detailed comparisons with established global optimization algorithms, 
is left for future work.


\section{Optimization over probability measures: $\mathcal X= \Pcal$}\label{section:optimization-measure}

This section is devoted to the SCM introduced in Section~\ref{section:opt-prob-measure} for global optimization over the space of probability measures. 
We provide a convergence analysis of the proposed particle approximation scheme, including quantitative bounds for both the finite-particle approximation error and the regularization error.



\subsection{Finite Particle Approximation}
The first source of error comes from approximating the continuous mean-field control problem with controlled finite number of particles. 

First, we recall Lions' derivative, which is the key tool for analyzing functions defined on the space of probability measures; see \cite{cardaliaguet2019master,carmona2018probabilistic} for more details.
Let $\Pcal$ be the space of probability measures on $\bR^d$ with finite second moments, equipped with the $2$-Wasserstein metric $\mathcal W_2(\cdot,\cdot)$.
A function $U:\Pcal\to \bR$ is said to be \textit{(Lions) differentiable} at $\mu\in \Pcal$ 
if there exists a random variable $\xi$ defined on some probability space $(\Omega,\mathcal F,\mathbb P)$ 
with law $\mathcal{L}(\xi)=\mu$ such that the mapping
\begin{align*}
    L^2(\Omega,\mathcal F,\mathbb P;\bR^d)\ni X \mapsto U(\mathcal{L}(X)) \in \bR
\end{align*}
is Fr\'echet differentiable at $X=\xi$. In this case, there exists a unique function $\Lions U(\mu):\bR^d\to \bR^d$ such that
\begin{align*}
     U(\cL(\xi+Y))-U(\cL(\xi)) 
     = \mathbb E\left[ \Lions U(\mu)(\xi) \cdot Y \right]
     +o\left(\sqrt{\E[|Y|^2]}\right), \quad \forall\, Y\in L^2(\Omega,\mathcal F,\mathbb P;\bR^d).
\end{align*}
The function $\Lions U(\mu)$ is called the \textit{(Lions) derivative} of $U$ at $\mu$. 
If the mapping $(\mu,x)\mapsto \Lions U(\mu)(x)$ is continuous on $\Pcal\times \bR^d$, then $U$ is said to be \textit{continuously (Lions) differentiable}.
Higher-order (Lions) derivatives can be defined similarly; in particular, the second-order (Lions) derivative $\partial^2_{\mu\mu} U(\mu)(x)(y)$ is a matrix in $\bR^{d\times d}$ for each $(\mu,x,y)\in \Pcal\times \bR^d\times \bR^d$.

To focus on the study of particle approximations, we assume the following smoothness condition on the solution to the master equation \eqref{Master-eq}.

\begin{ass}\label{ass:smoothness}
Let $G$ be so smooth that for each $\eps>0$ the master equation \eqref{Master-eq} admits a classical solution $\mathcal V_\eps$ satisfying:
\begin{enumerate}
    \item [(a)] the functions   $\partial_t \mathcal V_{\eps}$,$\partial_{\mu}\mathcal V_\eps$, $\partial_x\partial_{\mu}\mathcal V_\eps$,  $ \partial^2_{\mu\mu} \mathcal{V}_{\eps}$ are continuous and $\partial_x\partial_{\mu}\mathcal V_\eps$ and $\partial^2_{\mu\mu} \mathcal{V}_{\eps}$ are bounded,
    \item [(b)] it holds that $\left|\int_{\mathbb R^d}
\text{Tr}\left(\partial^2_{\mu\mu} \mathcal{V}_{\eps}(t, \mu)(y, y)\right) \mu(dy) \right| \leq L_{\eps}<\infty$,   for all $(t,\mu)\in [0,1]\times \Pcal$. 
\end{enumerate}
\end{ass}

From Assumption \ref{ass:smoothness}, we may derive the uniqueness of the classical solution as well as the characterization of the optimal feedback control.
\begin{lem}\label{lem:uniqueness}
Under Assumption \ref{ass:smoothness}, for each $\eps>0$, the classical solution $\mathcal V_\eps$ to 
the master equation \eqref{Master-eq} is the value function defined in \eqref{value-function-MF} and thus is unique.
\end{lem}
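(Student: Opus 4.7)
The plan is to carry out a measure-valued verification argument, in direct parallel with the Euclidean proof of Theorem \ref{thm-well-posedness}. Fix $(t,\mu)\in[0,1]\times\Pcal$, an arbitrary $\theta\in\Theta$, and an $\cF_t$-measurable random variable $\xi$ with $\cL(\xi)=\mu$, and write $X_s=X_s^{t,\xi;\theta}$ and $\mu_s=\cL(X_s)$. Under Assumption \ref{ass:smoothness}, the regularity of $\mathcal V_\eps$ permits applying the It\^o chain rule for functionals of a deterministic flow of measures without common noise (see \cite{cardaliaguet2019master,carmona2018probabilistic}): for each $s\in[t,1)$,
\begin{align*}
    \mathcal V_\eps(s,\mu_s)-\mathcal V_\eps(t,\mu)
    = \int_t^s \Big(\partial_r\mathcal V_\eps
    + \E\!\left[\Lions\mathcal V_\eps(r,\mu_r)(X_r)\cdot\theta_r\right]
    + \tfrac12\E\!\left[\mathrm{div}_x\Lions\mathcal V_\eps(r,\mu_r)(X_r)\right]\Big)\,dr.
\end{align*}
Only $\partial_\mu$ and $\mathrm{div}_x\partial_\mu$ terms appear, since the driving noise is idiosyncratic and no common-noise second-order Lions derivative enters.

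Substituting the master equation \eqref{Master-eq} for $\partial_r\mathcal V_\eps$ and completing the square in $\theta_r$ produces the identity
\begin{align*}
    \mathcal V_\eps(t,\mu) + \frac{\eps}{2}\int_t^s\E\!\left[\left|\theta_r+\tfrac{1}{\eps}\Lions\mathcal V_\eps(r,\mu_r)(X_r)\right|^2\right]dr
    = \mathcal V_\eps(s,\mu_s) + \frac{\eps}{2}\int_t^s\E\!\left[|\theta_r|^2\right]dr,
\end{align*}
so that letting $s\uparrow 1$ and using continuity of $\mathcal V_\eps$ with $\mathcal V_\eps(1,\cdot)=G(\cdot)$ yields $\mathcal V_\eps(t,\mu)\le J^{\theta}_\eps(t,\mu)$ for every $\theta\in\Theta$, with equality if and only if $\theta_s=\theta^*_s:=-\tfrac{1}{\eps}\Lions\mathcal V_\eps(s,\mu_s^*)(X_s^*)$. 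Since Assumption \ref{ass:smoothness}(a) makes the feedback $(s,\mu,x)\mapsto -\tfrac{1}{\eps}\Lions\mathcal V_\eps(s,\mu)(x)$ Lipschitz in $x$ uniformly in $(s,\mu)$, and Assumption \ref{ass:smoothness}(b) supplies a $\mathcal W_2$-Lipschitz modulus in $\mu$, standard McKean--Vlasov theory produces a unique strong solution $X^*$ to
\begin{align*}
    dX_s^* = -\tfrac{1}{\eps}\Lions\mathcal V_\eps(s,\cL(X_s^*))(X_s^*)\,ds + dW_s, \qquad X_t^*=\xi,
\end{align*}
with $\theta^*\in\Theta$; this produces the reverse inequality, identifies $\mathcal V_\eps$ with the value function \eqref{value-function-MF}, and therefore forces uniqueness of the classical solution.

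The main obstacle is the terminal limit $s\uparrow 1$: the chain-rule identity is established on $[t,1-\delta]$ and then extended by $\delta\downarrow 0$ via dominated convergence. The boundedness hypotheses in Assumption \ref{ass:smoothness} --- in particular item (b), which bounds $\left|\int_{\mathbb R^d}\mathrm{Tr}(\partial^2_{\mu\mu}\mathcal V_\eps(s,\mu)(y,y))\mu(dy)\right|$ uniformly in $(s,\mu)$ and thereby controls the measure-sensitivity of $\Lions\mathcal V_\eps$ along $(\mu_s)$ --- are precisely what make the limit rigorous, playing the role that the boundedness of $\partial_x V_\eps$ plays in the proof of Theorem \ref{thm-well-posedness}.
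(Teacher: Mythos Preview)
Your proposal is correct and follows essentially the same verification route as the paper's sketch: apply the It\^o chain rule for a flow of measures, complete the square to obtain the fundamental identity $J^{\theta}_{\eps}(t,\mu)=\mathcal V_\eps(t,\mu)+\frac{1}{2\eps}\int_t^1\E\big[|\eps\theta_r+\Lions\mathcal V_\eps(r,\mu_r)(X_r)|^2\big]dr$, and deduce both the inequality and its attainment at the feedback $\theta^*=-\tfrac{1}{\eps}\Lions\mathcal V_\eps$. One minor remark: the Lipschitz-in-$\mu$ property of the feedback that you need for well-posedness of the closed-loop McKean--Vlasov SDE already follows from the pointwise boundedness of $\partial^2_{\mu\mu}\mathcal V_\eps$ in Assumption~\ref{ass:smoothness}(a), not from the trace bound in~(b).
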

\begin{proof}[Sketch of the proof.]
The uniqueness of the classical solution to the master equation \eqref{Master-eq} follows from the verification argument. 
Indeed, applying the It\^o formula to any classical solution $\tilde{\mathcal{V}}_{\eps}$ 
of the master equation  \eqref{Master-eq} yields the fundamental identity: 
for any control $\theta \in \Theta$,
\begin{align*}
    J^{\theta}_{\eps}(t, \mu) 
     =  \tilde{\mathcal{V}}_{\eps}(t, \mu) + \mathbb{E} \left[ \int_t^T \frac{1}{2\eps} \left| \eps\, \theta_s + \Lions \tilde{\mathcal{V}}_{\eps}(s, \mathcal{L}(X^{t,\xi;\theta}_s))(X^{t,\xi;\theta}_s) \right|^2 ds \right].   
\end{align*}
Thus 
$$\tilde{\mathcal{V}}_{\eps}(t, \mu) \le J^{\theta}_{\eps}(t, \mu), \quad \forall\, \theta \in \Theta,$$
and the equality holds when choosing the optimal feedback control $\theta^*_s=-\frac{\Lions \tilde{\mathcal{V}}_{\eps}(s, \mathcal{L}(X^{t,\xi;\theta^*}_s))(X^{t,\xi;\theta^*}_s)}{\eps}$.
Hence, $\tilde{\mathcal{V}}_{\eps} = \mathcal{V}_{\eps}$.   
\end{proof}

While Assumption \ref{ass:smoothness} is stronger than what is required to guarantee the convergence of the normalized particle value function to the mean-field value function
 (see, e.g., \cite{bayraktar2025viscosity,carmona2018probabilistic,cheung2025viscosity,cosso2024master,lacker2017limit} for convergence results under weaker notions of solution), 
 additional regularity is typically indispensable for establishing quantitative rates of convergence; see \cite{cardaliaguet2019master,carmona2018probabilistic,chassagneux2014probabilistic,daudin2024optimal,germain2022rate}.
Accordingly, in line with the approach of \cite{germain2022rate}, we impose Assumption \ref{ass:smoothness} and work under the existence of a classical solution to the master equation 
to streamline the derivation of an $\mathcal{O}(1/N)$ error estimate.
For results on the existence of classical solutions to master equations under suitable structural and smoothness conditions, 
we refer to \cite{bayraktar2025convergence,buckdahn2017mean,cardaliaguet2019master,carmona2018probabilistic,chassagneux2014probabilistic} among many others. 
For completeness, we also present below a well-posedness result under classical $C^2$-type regularity and convexity assumptions on $G$. 
A key point we want to show is that the corresponding a priori estimates for the Lions derivatives of the value function can be taken \emph{uniformly} in the regularization parameter $\eps$.
\begin{prop}[Well-posedness of the master equation]\label{prop-well-posedness-mfc}
Suppose 
\begin{enumerate}
    \item [(a)] (Regularity): $G$ has Lipschitz continuous derivatives $\partial_\mu G(\mu)(x)$, $\partial^2_{\mu\mu} G(\mu)(x)(y)$, $\partial_x\partial_{\mu}G(\mu)(x)$, $\partial^2_{xx}\partial_{\mu}G(\mu)(x)$,
    $\partial_x\partial^2_{\mu\mu}G(\mu)(x)(y)$, and $\partial_y\partial_{\mu\mu}^2G(\mu)(x)(y)$ and there exists a constant $K_G>0$ such that 
        $|\partial^2_{\mu\mu}G(\mu)(x)(y)| + |\partial_x\partial_{\mu}G(\mu)(x) |
        \le K_G$ for all $\mu \in \Pcal$ and $x,y \in \bR^d$;
    \item [(b)] (Averaged convexity): For each $\mu\in\Pcal$ and every pair $(X,\eta)$ of $\bR^d$-valued square integrable random variables with $\cL(X)=\mu$,
        \begin{equation*}
            \E\left[
            \frac{1_{|\eta|>0}}{|\eta|}\langle \eta,\partial_x\partial_\mu G(\mu)(X)\eta\rangle
            + \frac{1_{|\eta|>0}}{|\eta|}
            \Big\langle \eta,\widetilde{\E}\big[\partial^2_{\mu\mu}G(\mu)(X)(\widetilde X)\widetilde\eta\big]\Big\rangle \right]
            \ge 0
        \end{equation*}
        where $(\widetilde X,\widetilde\eta)$ is an independent copy of $(X,\eta)$.
    \item [(c)] (Coercivity) There exists $\kappa>0$ such that 
         \[
             \partial_x \partial_\mu G(\mu)(x) \geq \kappa I_d, \quad \text{for all } (x,\mu) \in \mathbb{R}^d\times \Pcal. \]
\end{enumerate}
Then the value function $\mathcal{V}_{\eps}$ of the mean-field control problem \eqref{value-function-MF} 
is the unique classical solution to the master equation \eqref{Master-eq} (as in Assumption \ref{ass:smoothness}). 
Moreover, $\mathcal{V}_{\eps}$ satisfies the 
following $\eps$-uniform estimates: 
for all $(t,\mu,x)\in [0,T]\times \Pcal \times \bR^d$
\begin{align}
    & |\partial_x\partial_{\mu} \mathcal{V}_{\eps}(t, \mu) (x)|_{op} \leq  K_G , 
    \nonumber\\
    &|\partial^2_{\mu\mu} \mathcal{V}_{\eps}(t, \mu)(x)(y)|_{op} \leq \frac{(K_G)^2}{\kappa} + K_G, \quad \mu\text{-a.e. } y\in\bR^d. \label{boundedness-Lions-derivative}
\end{align}
\end{prop}

The identification of the value function $\mathcal V_{\eps}$ as a classical solution to the master equation \eqref{Master-eq} 
follows from the stochastic Pontryagin maximum principle in \cite{carmona2018probabilistic} and the classical solution theory of master equations in \cite{chassagneux2014probabilistic};
 in particular, this justification does not require the assumption (c) (Coercivity) in Proposition \ref{prop-well-posedness-mfc}.
In contrast, the coercivity condition plays an essential role in our argument for deriving the $\eps$-uniform bounds in \eqref{boundedness-Lions-derivative}. 
Our analysis is devoted to establishing these $\eps$-uniform estimates, which, to the best of our knowledge, are not addressed explicitly in the existing literature. 
For the sake of readability, the technical proof is deferred to Appendix~\ref{app:well-posedness_mfc}.

\begin{thm}[Particle Approximation Error]
\label{thm:value_convergence}
Let Assumption \ref{ass:smoothness} hold and let $v^N_\eps(t, \mathbf{x})$ be the value function of the $N$-particle control problem satisfying  the HJB equation \eqref{eq:particle_hjb}.
Then, for any configuration $\mathbf{x} \in \bR^{Nd}$, the per-particle value function converges with rate $O(1/N)$, i.e.,
\begin{equation}
    \left| \frac{1}{N} v^N_{\eps}(t, \mathbf{x}) - \mathcal{V}_\eps(t, \mu^N_{\mathbf{x}}) \right| \le \frac{L_{\eps}}{2   N}, \quad \forall\, t \in [0,1],
\end{equation}
where $\mu^N_{\mathbf{x}}$ is the empirical measure of the configuration. Moreover, the constant $L_{\eps}$ is independent of $\eps$ under assumptions in Proposition \ref{prop-well-posedness-mfc}.
\end{thm}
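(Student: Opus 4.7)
My plan is to prove the estimate by a verification argument comparing the $N$-particle value function $v^N_\eps$ with the lifted functional
\[
w^N_\eps(t,\mathbf{x}) := N\,\mathcal{V}_\eps(t,\mu^N_{\mathbf{x}}), \qquad \mu^N_{\mathbf{x}} := \tfrac{1}{N}\sum_{i=1}^N \delta_{x_i}.
\]
At the terminal time the two already agree, $w^N_\eps(1,\mathbf{x}) = NG(\mu^N_{\mathbf{x}}) = U(\mathbf{x}) = v^N_\eps(1,\mathbf{x})$, so the analysis reduces to controlling the residual by which $w^N_\eps$ fails to solve the $N$-particle HJB equation \eqref{eq:particle_hjb}.

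First, I would use the standard Lions-calculus chain rule for functionals evaluated on empirical measures, which yields
\[
\partial_{x_i} w^N_\eps = \partial_\mu \mathcal V_\eps(t,\mu^N_{\mathbf{x}})(x_i), \qquad
\Delta_{x_i} w^N_\eps = \mathrm{tr}\bigl(\partial_x\partial_\mu \mathcal V_\eps(x_i)\bigr) + \tfrac{1}{N}\,\mathrm{tr}\bigl(\partial^2_{\mu\mu}\mathcal V_\eps(x_i,x_i)\bigr).
\]
Summing over $i$, converting $\sum_i f(x_i) = N\int f\,d\mu^N_{\mathbf{x}}$, and invoking the master equation \eqref{Master-eq} at $\mu^N_{\mathbf{x}}$, all contributions cancel except the one produced by the diagonal part of $\partial^2_{\mu\mu}\mathcal V_\eps$. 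Thus $w^N_\eps$ satisfies
\[
\partial_t w^N_\eps + \tfrac12\sum_{i=1}^N \Delta_{x_i} w^N_\eps - \tfrac{1}{2\eps}\sum_{i=1}^N |\partial_{x_i} w^N_\eps|^2 = R^N_\eps(t,\mathbf{x}),
\]
with
\[
R^N_\eps(t,\mathbf{x}) := \tfrac12 \int_{\bR^d} \mathrm{tr}\bigl(\partial^2_{\mu\mu}\mathcal V_\eps(t,\mu^N_{\mathbf{x}})(y,y)\bigr)\,\mu^N_{\mathbf{x}}(dy),
\]
and Assumption~\ref{ass:smoothness}(b) gives $|R^N_\eps|\le L_\eps/2$.

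Next, I would apply It\^o's formula to $w^N_\eps(s,\mathbf X_s)$ along an arbitrary admissible control $\theta\in\Theta$, add the running cost $\tfrac{\eps}{2}\sum_i|\theta^i_s|^2$, and complete the square in $\theta^i_s + \tfrac{1}{\eps}\partial_{x_i}w^N_\eps$, obtaining the verification identity
\[
J^{N,\theta}_\eps(t,\mathbf{x}) - w^N_\eps(t,\mathbf{x})
= \E\!\left[\int_t^1 R^N_\eps(s,\mathbf X_s)\, ds\right]
+ \tfrac{\eps}{2}\,\E\!\left[\int_t^1 \sum_{i=1}^N \bigl|\theta^i_s + \tfrac{1}{\eps}\partial_{x_i} w^N_\eps(s,\mathbf X_s)\bigr|^2 ds\right].
\]
Since the last term is nonnegative and $|R^N_\eps|\le L_\eps/2$, infimizing over $\theta$ yields $v^N_\eps \ge w^N_\eps - (1-t)L_\eps/2$; evaluating the same identity at the Markovian feedback $\theta^{*,i}(s,\mathbf x) := -\partial_{x_i} w^N_\eps(s,\mathbf x)/\eps$ annihilates the quadratic term and delivers the reverse inequality. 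Dividing by $N$ produces the stated bound. The $\eps$-uniformity of $L_\eps$ under the hypotheses of Proposition~\ref{prop-well-posedness-mfc} is then immediate from the $\eps$-uniform estimate on $\partial^2_{\mu\mu}\mathcal V_\eps$ in \eqref{boundedness-Lions-derivative}.

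The main technical obstacle is justifying the It\^o step and the admissibility of $\theta^*$: since $\partial_\mu\mathcal V_\eps(t,\mu^N_{\mathbf{x}})(x_i)/\eps$ need not be globally bounded in $\mathbf{x}$, a localization argument is required to upgrade the local martingales produced by It\^o's formula to true martingales and to construct a strong solution of the closed-loop SDE. The boundedness of $\partial_x\partial_\mu \mathcal V_\eps$ in Assumption~\ref{ass:smoothness}(a) controls the spatial gradients of $\partial_\mu \mathcal V_\eps$, yielding at most linear growth in $\mathbf{x}$, which is enough to close the standard stopping-time argument.
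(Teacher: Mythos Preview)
Your proof is correct and shares the same core structure as the paper's: define the lifted function $w^N_\eps(t,\mathbf{x})=N\,\mathcal V_\eps(t,\mu^N_{\mathbf{x}})$, compute its derivatives via the Lions chain rule, plug into the $N$-particle HJB operator, and observe that the master equation kills everything except the diagonal second-order Lions term, leaving a residual bounded by $L_\eps/2$.

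The difference lies in the final comparison step. You run a probabilistic verification argument: apply It\^o's formula to $w^N_\eps$ along an arbitrary control, complete the square, and then evaluate at the feedback $\theta^{*,i}=-\partial_{x_i}w^N_\eps/\eps$ to close the two-sided bound. The paper instead stays at the PDE level: it forms $e^N_\eps=v^N_\eps-w^N_\eps$, linearizes the quadratic nonlinearity via $|a|^2-|b|^2=(a+b)\cdot(a-b)$ to obtain a linear parabolic equation for $e^N_\eps$ with drift $\mathbf{b}=-\tfrac{1}{2\eps}(\partial v^N_\eps+\partial w^N_\eps)$ and source $R^N_\eps$, and then invokes the maximum principle (equivalently, Feynman--Kac) to bound $|e^N_\eps|\le (1-t)L_\eps/2$. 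The practical payoff of the paper's route is that it sidesteps the admissibility issue you correctly flag: no feedback control needs to be constructed or shown to generate a well-posed closed-loop SDE. On the other hand, the paper's comparison principle also implicitly relies on some growth control of the drift $\mathbf{b}$, so the two approaches face analogous (and equally manageable) technicalities; they are really the stochastic and PDE faces of the same argument.
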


A proof, inspired by the projection argument developed in \cite{germain2022rate} 
and based on combining the projection of the master-equation solution onto the empirical manifold with a comparison principle for the finite-dimensional HJB equation, is given in Appendix~\ref{app:particle_convergence}.

To extend the result to a general probability measure $\mu \in \Pcal$, 
one may rely on the Lipschitz continuity of the value function $\mathcal{V}_\eps$; note that we do not need such an extension in Theorem \ref{thm:global_convergence} 
as we choose all particles starting from a fixed point. In fact, the value function $\mathcal{V}_\eps$ is Lipschitz continuous with respect to the 2-Wasserstein distance under the Lipschitz assumptions on $G$.
 For any $\mu_1,\mu_2\in \Pcal$, by the definition of $\mathcal{V}_\eps$, we have
\begin{align*}
    |\mathcal{V}_\eps(t,\mu_1) - \mathcal{V}_\eps(t,\mu_2)|
    =\Big|\inf_{\theta \in \Theta} J^{\theta}_{\eps}(t,\mu_1) - \inf_{\theta \in \Theta} J^{\theta}_{\eps}(t,\mu_2) \Big| 
    &\le \sup_{\theta \in \Theta} \left| G(\mathcal{L}(X_1^{t,\mu_1;\theta})) - G(\mathcal{L}(X_1^{t,\mu_2;\theta})) \right| \\
    &\le L_G \sup_{\theta \in \Theta} \mathcal W_2(\mathcal{L}(X_1^{t,\mu_1;\theta}), \mathcal{L}(X_1^{t,\mu_2;\theta})) \\
    &\le L_G \mathcal W_2(\mu_1, \mu_2),
\end{align*}
where the last inequality follows from the fact that the controlled dynamics with the same control $\theta$ preserve the distance between initial distributions (the noise is additive and independent of the state). 
Thus, $\mathcal{V}_\eps$ inherits the Lipschitz constant of $G$.

\begin{cor}[General Measure Error]\label{cor:general_measure}
Let $\mathbf{X} = (X_1, \dots, X_N)$ be a system of $N$ i.i.d. random variables with law $\mu$ in Theorem \ref{thm:value_convergence}. Then
\begin{equation}
    \E \left| \frac{v^N(t, \mathbf{X})}{N} - \mathcal{V}_\eps(t, \mu) \right| \le \frac{L_\eps}{2N} + L_G \cdot \E [\mathcal W_2(\mu^N_{\mathbf{X}}, \mu)]
\end{equation}
where $L_G$ is the Lipschitz constant of $G$. 
\end{cor}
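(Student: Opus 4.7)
The plan is to reduce the statement directly to Theorem~\ref{thm:value_convergence} together with the Lipschitz continuity of $\mathcal V_\eps$ established immediately before the corollary. The bridge between the finite-particle value evaluated at a random configuration $\mathbf X$ and the mean-field value at the distribution $\mu$ is the empirical measure $\mu^N_{\mathbf X}$, for which both estimates apply directly.

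First I would insert $\mathcal V_\eps(t,\mu^N_{\mathbf X})$ and apply the triangle inequality pointwise in $\omega$:
\begin{equation*}
    \left| \frac{v^N(t,\mathbf X)}{N} - \mathcal V_\eps(t,\mu) \right|
    \le \left| \frac{v^N(t,\mathbf X)}{N} - \mathcal V_\eps(t,\mu^N_{\mathbf X}) \right|
    + \left| \mathcal V_\eps(t,\mu^N_{\mathbf X}) - \mathcal V_\eps(t,\mu) \right|.
\end{equation*}
Since Theorem~\ref{thm:value_convergence} is stated for an arbitrary configuration $\mathbf x\in\bR^{Nd}$, it holds almost surely with $\mathbf x=\mathbf X(\omega)$, yielding
\begin{equation*}
    \left| \frac{v^N(t,\mathbf X)}{N} - \mathcal V_\eps(t,\mu^N_{\mathbf X}) \right| \le \frac{L_\eps}{2N} \quad \text{a.s.}
\end{equation*}

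Next, I would use the Lipschitz continuity of $\mathcal V_\eps$ with respect to the $2$-Wasserstein distance (with constant $L_G$, as derived in the paragraph preceding the corollary) to bound the second term:
\begin{equation*}
    \left| \mathcal V_\eps(t,\mu^N_{\mathbf X}) - \mathcal V_\eps(t,\mu) \right| \le L_G\, \mathcal W_2(\mu^N_{\mathbf X},\mu) \quad \text{a.s.}
\end{equation*}
Finally, taking expectation of the sum of the two bounds gives the claim. No tightness argument or further analysis is needed, and there is no substantive obstacle: the result is essentially an immediate corollary, with the only subtlety being to notice that Theorem~\ref{thm:value_convergence} applies $\omega$-by-$\omega$ so that the constant $L_\eps/(2N)$ survives integration. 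The i.i.d.\ assumption itself is not used in the inequality; it is typically invoked only when one wishes to further estimate $\E[\mathcal W_2(\mu^N_{\mathbf X},\mu)]$ by classical empirical-measure rates (e.g., Fournier--Guillin), which is outside the scope of the stated inequality.
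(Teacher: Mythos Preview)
Your proposal is correct and follows exactly the approach implied by the paper: insert $\mathcal V_\eps(t,\mu^N_{\mathbf X})$, apply Theorem~\ref{thm:value_convergence} pointwise for the first term and the Lipschitz estimate derived just before the corollary for the second, then take expectations. Your observation that the i.i.d.\ assumption is not used in the inequality itself but only in the subsequent empirical-measure rate estimate (Remark~\ref{rmk:sampling_error}) is also accurate.
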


\begin{rmk} \label{rmk:sampling_error}
The expectation of the Wasserstein distance $\E[\mathcal W_2(\mu^N_{\mathbf{X}}, \mu)]$ converges to zero as $N$  tends to infinity but with the rate depending on the dimension $d$ and the moments of $\mu$. Assuming $\mu$ has sufficiently many finite moments, Fournier and Guillin \cite{fournier2015rate} established the following rates:
$$
\E[\mathcal W_2(\mu^N_{\mathbf{X}}, \mu)] \le  \begin{cases}
C_{\alpha }N^{-1/2+\alpha}, \quad \text{for}\, \alpha \in (0, \frac{1}{2}), & \text{if } d \leq 4, \\
C_2N^{-1/d}, & \text{if } d > 4,
\end{cases}
$$
where $C_{\alpha} $ depends on the dimension $d$ and the $\alpha$, and $C_2$ depends on the dimension $d$.
\end{rmk}



\subsection{Regularization error}
The second source of error arises from replacing the hard minimization problem with a regularized mean-field stochastic 
control problem.

\begin{thm}[Regularization Error]\label{thm:regularization_error}
Let $G: \Pcal \to \bR$ be H\"older continuous with respect to the 2-Wasserstein distance, i.e., there exist $L_G>0$ and $\alpha \in (0,1]$ such that
$|G(\mu) - G(\nu)| \le L_G |\mathcal W_2(\mu, \nu) |^\alpha$ for all $\mu, \nu \in \Pcal$.
Assume that $G$ admits a global minimizer $\mu^*$ with a finite second moment (i.e., $\int |x|^2 d\mu^*(x) < \infty$).
Let $\mathcal V_0 = \inf_{\mu} G(\mu)$. Then the value function $\mathcal{V}_\eps$ satisfies
\begin{equation}
    0 \le \mathcal{V}_\eps(0,\delta_{x_0}) - \mathcal V_0 \le C \cdot \eps \ln\left(\frac{1}{\eps}\right), \quad \text{for } \eps \in (0,e^{-1}),
\end{equation}
for some constant $C > 0$ depending on $d$, $L_G$, $\alpha$, and the 2-Wasserstein  distance ($\mathcal W_2(\delta_{x_0}, \mu^*)$) between the Dirac measure at $x_0$ and  
$\mu^*$.
\end{thm}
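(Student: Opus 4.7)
The lower bound $\mathcal{V}_\eps(0,\delta_{x_0}) \geq \mathcal V_0$ is immediate from the nonnegativity of the quadratic regularization cost, so I will focus on the upper bound. The plan is to carry the Brownian-bridge construction of Theorem \ref{thm-convergence} over from the Euclidean setting, replacing the deterministic target minimizer $\xi$ by a random variable $\xi^{*}$ with law $\mu^{*}$. Concretely, I will enlarge the underlying filtered probability space to support an $\mathcal F_0$-measurable random variable $\xi^{*} \sim \mu^{*}$ independent of $W$, so that any control built from $\xi^{*}$ and $W$ is admissible. Then, for a parameter $\delta \in (0,1)$ to be optimized, I define
\begin{equation*}
    \tilde\theta^{\eps}_t = \frac{\xi^{*}-X_t^{0,x_0;\tilde\theta^{\eps}}}{1-t}\,\mathbf{1}_{[0,1-\delta]}(t),
\end{equation*}
so that $X_t^{0,x_0;\tilde\theta^{\eps}}$ coincides with the Brownian bridge $B_t$ from $x_0$ to $\xi^{*}$ on $[0,1-\delta]$ and evolves as a pure Brownian motion on $[1-\delta,1]$, giving $X_1^{0,x_0;\tilde\theta^{\eps}} = B_{1-\delta} + (W_1 - W_{1-\delta})$.

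With this admissible choice, suboptimality yields the decomposition
\begin{equation*}
    0 \le \mathcal{V}_\eps(0,\delta_{x_0}) - \mathcal V_0 \le \underbrace{\bigl[G(\mathcal L(X_1^{0,x_0;\tilde\theta^{\eps}})) - G(\mu^{*})\bigr]}_{I_1} + \underbrace{\frac{\eps}{2}\,\mathbb E\!\left[\int_0^{1-\delta}\!\Big|\tfrac{\xi^{*}-B_t}{1-t}\Big|^2 dt\right]}_{I_2}.
\end{equation*}
For $I_1$, I use the natural coupling $(X_1^{0,x_0;\tilde\theta^{\eps}},\xi^{*})$ and the Hölder continuity of $G$, estimating
\begin{equation*}
    \mathcal W_2(\mathcal L(X_1^{0,x_0;\tilde\theta^{\eps}}),\mu^{*})^2 \le 2\,\mathbb E\bigl[|B_{1-\delta}-B_1|^2\bigr] + 2\,\mathbb E\bigl[|W_1-W_{1-\delta}|^2\bigr],
\end{equation*}
and applying Lemma~\ref{lem-convergence} with $\mathbb E[|x_0-\xi^{*}|^2] = \mathcal W_2(\delta_{x_0},\mu^{*})^2$ (since $\delta_{x_0}$ is a Dirac mass, the identity coupling is optimal). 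For $I_2$, I repeat verbatim the It\^o isometry calculation from the proof of Theorem~\ref{thm-convergence} with $\xi$ replaced by $\xi^{*}$, which yields $I_2 \le \frac{\eps}{2}\bigl(\mathcal W_2(\delta_{x_0},\mu^{*})^2 + d\ln(1/\delta)\bigr)$.

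Combining the two estimates gives a bound of the form $C_1(\mathcal W_2(\delta_{x_0},\mu^{*})\,\delta + \sqrt{d\delta})^{\alpha} + C_2\,\eps\,(1 + d\ln(1/\delta))$, and I will then optimize over $\delta$ by choosing $\delta = \eps^{2/\alpha}/d$ (exactly as in Theorem~\ref{thm-convergence}), producing the advertised rate $C\,\eps\ln(1/\eps)$ with a constant depending only on $d$, $L_G$, $\alpha$, and $\mathcal W_2(\delta_{x_0},\mu^{*})$. The main subtlety, and essentially the only place the mean-field setting departs from the Euclidean proof, is the construction of the admissible probability space supporting $\xi^{*}$: one must verify that such an enlargement preserves the value function $\mathcal V_\eps(0,\delta_{x_0})$, i.e., that controls on enlarged (but initially independent of $W$) filtrations are legitimate candidates in the infimum defining $\mathcal V_\eps$. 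This is standard in the weak-formulation framework for mean-field control, but it warrants explicit mention; once granted, the remainder of the argument is a mechanical adaptation of the Euclidean proof.
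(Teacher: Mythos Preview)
Your proof is correct but takes a genuinely different route from the paper. The paper reformulates the running cost as a relative entropy via Girsanov, $\frac{\eps}{2}\mathbb{E}\int_0^1|\theta_t|^2\,dt = \eps\,\mathcal{H}(\mathbb{Q}^\theta|\mathbb{P})$, then smooths $\mu^*$ by Gaussian convolution to $\mu^\lambda = \mu^* * \phi_\lambda$ and invokes Schr\"odinger-bridge theory to assert that the minimum entropy cost to hit the terminal law $\mu^\lambda$ equals the \emph{static} entropy $\mathcal{H}(\mu^\lambda|\gamma_{x_0})$; the latter is then bounded by a direct differential-entropy calculation, and $\lambda$ is optimized. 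Your argument instead transports the Brownian-bridge construction of Theorem~\ref{thm-convergence} verbatim, replacing the deterministic target $\xi$ by a random $\xi^*\sim\mu^*$ placed in $\mathcal{F}_0$.

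Your approach is more elementary---no Schr\"odinger-bridge machinery, no entropy inequalities---and it makes explicit that the measure-valued statement is the Euclidean statement with a randomized endpoint; the estimates for $I_1$ and $I_2$ are literally those of Theorem~\ref{thm-convergence} with $\mathbb{E}|x_0-\xi|^2$ replaced by $\mathcal{W}_2(\delta_{x_0},\mu^*)^2$. The paper's approach, on the other hand, avoids the filtration enlargement entirely: the Schr\"odinger-bridge (F\"ollmer) drift achieving $\mu^\lambda$ is a Markovian feedback in the state and hence adapted to the original Brownian filtration, so no weak-formulation argument is needed. You are right that the enlargement is the only nontrivial point in your route; it is indeed standard (the value function of a mean-field control problem with law-dependent cost does not depend on the underlying rich-enough probability space), but it does require a sentence of justification that the paper's route sidesteps.
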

 
\begin{proof}
\textbf{Step 1. } First, we reformulate the stochastic control problem as a variational problem over probability measures.
Let $\mathbb{P}$ be the Wiener measure for paths starting at $x_0$ at time 0 
(the law of the uncontrolled proces $X^{0,x_0;0}$ where $\theta \equiv 0$). 
For any admissible control $\theta \in \Theta$, let $\mathbb{Q}^\theta$ be the law of the controlled process $X^{0,x_0;\theta}$.

By Girsanov's theorem, the Radon-Nikodym derivative of $\mathbb{Q}^\theta$ with respect to $\mathbb{P}$ is
$$
\frac{d\mathbb{Q}^\theta}{d\mathbb{P}} = \exp\left( \int_0^1 \theta_t \cdot dW_t - \frac{1}{2} \int_0^1 |\theta_t|^2 dt \right).
$$
The relative entropy (Kullback-Leibler divergence) is given by
$$
\mathcal H (\mathbb{Q}^\theta | \mathbb{P}) = \E^{\mathbb{Q}^\theta} \left[ \ln \frac{d\mathbb{Q}^\theta}{d\mathbb{P}} \right] = \E \left[ \frac{1}{2} \int_0^1 |\theta_t|^2 dt \right].
$$
This identity allows us to rewrite the cost functional in terms of the induced measure $\mathbb{Q}^\theta$, and the control problem becomes
\begin{equation}
    \label{eq:variational}
    \mathcal{V}_\eps(0,\delta_{x_0}) = \inf_{\theta \in \Theta} \left\{ G(\mathcal{L}(X_1^{0,x_0;\theta})) + \eps \mathcal{H}(\mathbb{Q}^\theta | \mathbb{P}) \right\}.
\end{equation}

The lower bound is obvious from the control problem and alternatively, it follows immediately from the non-negativity of the relative entropy term,
 i.e., as for any admissible control $\theta$, $\mathcal H (\mathbb{Q}^\theta | \mathbb{P}) \ge 0$, we have
\begin{equation}
    \mathcal{V}_\eps(0,\delta_{x_0}) = \inf_{\theta} \left\{ G(\mathcal{L}(X_1^{0,x_0;\theta})) + \eps \mathcal H(\mathbb{Q}^\theta | \mathbb{P}) \right\} \ge \inf_{\theta} G(\mathcal{L}(X_1^{0,x_0;\theta})) \ge \inf_{\mu \in \Pcal} G(\mu) =\mathcal V_0.
\end{equation}

\textbf{Step 2.} Next, we prove the upper bound by constructing a specific suboptimal control strategy $\theta^\lambda$.
Let $\mu^*$ be a global minimizer of $G$, i.e., $G(\mu^*) = \mathcal V_0$.
By assumption, $\mu^*$ has a finite second moment. Note that $\mu^*$ may be singular (e.g., a Dirac measure). 
To handle this, we smooth $\mu^*$ by convolving it with a Gaussian kernel. 
We define a smoothed candidate measure $\mu^\lambda$ by convolving $\mu^*$ with 
a Gaussian kernel $\phi_\lambda$ of variance $\lambda \in (0,1)$, i.e., 
\begin{equation}
    \mu^\lambda(dx) = (\mu^* \ast \phi_\lambda)(x) dx = \int_{\bR^d} \frac{1}{(2\pi\lambda)^{d/2}} \exp\left( -\frac{|x - y|^2}{2\lambda} \right) \, \mu^*(dy) \, dx.
\end{equation}

The theory of Schrödinger bridges (see, e.g., \cite{follmer2006random,leonard2014survey}) guarantees the existence of such a control $\theta^\lambda$
that the law of the controlled process at time $t=1$ matches $\mu^\lambda$, 
 i.e., $\mathcal{L}(X_1^{0,x_0;\theta^\lambda}) = \mu^\lambda$.
Moreover, 
the minimum entropy cost to achieve a marginal distribution $\mu^\lambda$ at time $t=1$ is simply the relative entropy of the marginals, i.e., 
$$
\inf_{\theta: \mathcal{L}(X_1^{0,x_0;\theta}) = \mu^\lambda} \mathcal H (\mathbb{Q}^\theta | \mathbb{P}) = \mathcal H (\mu^\lambda | \gamma_{x_0}),
$$
where $\gamma_{x_0} = \mathcal{L}(W_1+x_0) \sim \mathcal{N}(x_0, I_d)$. 
 Substituting this control into the cost functional gives an upper bound
\begin{equation}
    \mathcal{V}_\eps(0,\delta_{x_0}) \le   G(\mu^\lambda) + \eps \mathcal H(\mu^\lambda | \gamma_{x_0}).
\end{equation}

As $G$ is  H\"older continuous with respect to $\mathcal W_2$, we have
$
|G(\mu^\lambda) - G(\mu^*)| \le L_G \cdot |\mathcal  W_2(\mu^\lambda, \mu^*)|^\alpha.
$
Recalling that the measure $\mu^\lambda$ is obtained by adding independent noise $\sqrt{\lambda}Z$ to $\mu^*$, with $Z \sim \mathcal{N}(0, I_d)$. 
The 2-Wasserstein distance is bounded by the second moment of the noise. i.e.,
$$
\mathcal W_2(\mu^\lambda, \mu^*) \le \sqrt{\E[|\sqrt{\lambda}Z|^2]} = \sqrt{d \lambda}.
$$
Thus, it follows that 
$$ G(\mu^\lambda) \le \mathcal V_0 + L_G (d \lambda)^{\alpha/2}. $$

For the relative entropy term, we decompose it as follows:
$$
\mathcal H(\mu^\lambda | \gamma_{x_0}) = \int_{\bR^d}  \ln \left(\frac{\mu^\lambda(dx)}{\gamma_{x_0}(dx)}\right) \mu^\lambda(dx) 
= -h(\mu^\lambda) - \int_{\bR^d}  \ln \left(\frac{\gamma_{x_0} (dx)}{dx}\right) \mu^\lambda(dx),
$$
where $h(\mu^\lambda)= - \int_{\bR^d}  \ln \left(\frac{\mu^\lambda(dx)}{dx}\right) \mu^\lambda(dx)$ is the differential entropy. On the one hand,  as conditioning
reduces entropy, we have $h(\mu^\lambda) \ge h(\gamma_{0}^\lambda)$, where $\gamma_0^\lambda = \mathcal{N}(0, \lambda I_d)$ is the Gaussian with mean 0 and covariance $\lambda I_d$,
 which yields the upper bound
$$-h(\mu^\lambda) \le -h(\gamma_0^\lambda) = -\frac{d}{2} \ln(2\pi e \lambda).$$
On the other hand, we estimate the cross-term
$$-\int_{\bR^d} \ln \left(\frac{\gamma_{x_0} (dx)}{dx}\right) \mu^\lambda(dx)
 = \int_{\bR^d}  \left( \frac{|x - x_0 |^2}{2} + \frac{d}{2} \ln(2\pi) \right) \mu^\lambda(dx) \le M_2(x_0) +d\lambda + \frac{d}{2} \ln(2\pi),$$
where $M_2(x_0) = \int |x - x_0|^2 d\mu^*(x)$ is nothing but $|\mathcal W_2(\delta_{x_0}, \mu^*)|^2$. 

Therefore, combining these estimates, we have
\begin{equation}
    \mathcal H(\mu^\lambda | \gamma_{x_0} ) \le C_1 + \frac{d}{2} \ln\left(\frac{1}{\lambda}\right)
\end{equation}
for some constant $C_1$ depending on $d$ and  $\mathcal W_2(\delta_{x_0}, \mu^*)$.

\textbf{Step 3.}
Substituting the estimates back into the upper bound for $\mathcal{V}_\eps(0,\delta_{x_0})$ gives 
$$
\mathcal{V}_\eps(0,\delta_{x_0}) - \mathcal V_0 \le L_G(d\lambda)^{\alpha/2} + \eps \left( C_1 + \frac{d}{2} \ln\left(\frac{1}{\lambda}\right) \right).
$$
Setting $\lambda =  \frac{\eps^{2/\alpha}}{d}$ yields that
\begin{equation*}
    \mathcal{V}_\eps(0,\delta_{x_0}) - \mathcal V_0 \le C  \eps \ln \left(\frac{1}{\eps}\right),  
\end{equation*}
where the constant $C$ depends on $d$, $L_G$, $\alpha$, and  $\mathcal W_2(\delta_{x_0}, \mu^*)$.
\end{proof}

\subsection{Numerical approximation with examples}

Theorem~\ref{thm:regularization_error} establishes that, for sufficiently small $\eps$, the regularized mean-field control (MFC) problem provides a consistent approximation of the original optimization problem over $\Pcal$. 
From a computational perspective, one may then seek approximate solutions to the regularized MFC problem via numerical approaches based on the associated McKean--Vlasov FBSDE, or alternatively through its equivalent formulation as a mean-field potential game; see \cite{carmona2018probabilistic}. 
Related numerical methodologies for mean-field control and mean-field game problems can be found, for instance, in \cite{achdou2020mean,carmona2022convergence,pfeiffer2017numerical,ruthotto2020machine} and the references therein.

Here, we focus on the numerical approximation of the $N$-particle control problem \eqref{reg-sto-control-prob-mf-N} as an approximation to the regularized MFC problem.
Observe that the $N$-particle control problem \eqref{reg-sto-control-prob-mf-N} constitutes a finite-dimensional stochastic control problem on $\bR^{dN}$, in the same spirit as the setting studied in Section~\ref{section:opt-finite-dim}. 
Consequently, numerical approximations can be developed analogously to those in Section~\ref{subsection:numerics_finite_dim}, 
by employing time discretization of the controlled SDEs with Monte Carlo simulations.

Again, we adopt the Euler-Maruyama scheme with a time step $\Delta t = 1/M$ for some integer $M$. 
For the time grid $t_k = k \Delta t$, $k=0, \dots, M$, in view of the optimal control \eqref{optimal-control-particle} and the discretization \eqref{Euler-scheme-1}, the discretized dynamics for each particle $i=1, \dots, N$ is given by:
\begin{equation}
  X^i_{0}=x_0,
  \quad  X^{i}_{t_{k+1}} = X^{i}_{t_k} + \left(\beta^i(t_k, X^{i}_{t_k})-X^{i}_{t_k}\right) \frac{\Delta t}{1-t_k} + \sqrt{\Delta t} Z^{i}_k, \label{Euler-scheme-particle}
\end{equation}
where $Z^{i}_k \sim \mathcal{N}(0, I_d)$ are independent standard normal random variables, and 
\begin{align}
    \beta^i\!\left(t_k,X_{t_k}^i\right)
    &= \frac{\mathbb E\!\left[ \exp\!\left(-\frac{N}{\eps}G( \frac{1}{N} \sum_{j=1}^N\delta_{\{\sqrt{1-t_k} \xi^j+x^j\}})\right)\,( \sqrt{1-t_k} \xi^i+x^i) \right]}
    {\mathbb E\!\left[ \exp\!\left(-\frac{N}{\eps}G(\frac{1}{N} \sum_{j=1}^N\delta_{\{\sqrt{1-t_k} \xi^j+x^j\}})\right) \right]}
    \bigg|_{x^j=X_{t_k}^j, \,j=1,\dots,N}\nonumber \\
    &\approx
    \sum_{l=1}^S   \frac{\exp\!\left(-\frac{N}{\eps}G( \frac{1}{N} \sum_{j=1}^N\delta_{\{\sqrt{1-t_k} \xi_{j,l}+X_{t_k}^j\}})\right) ( \sqrt{1-t_k} \xi_{i,l}+X_{t_k}^i)}
    {
    \sum_{l=1}^S \exp\!\left(-\frac{N}{\eps}G(\frac{1}{N} \sum_{j=1}^N\delta_{\{\sqrt{1-t_k} \xi_{j,l}+X_{t_k}^j\}})\right)}  \label{beta}
    \, .
\end{align}
Here, $\{\xi^j\}_{j=1}^N$ are $N$-independent standard normal random variables in \(\mathbb{R}^d\), 
and for each fixed $j$, $\{\xi_{j,l}\}_{l=1}^S$ are $S$ independent samples from the standard normal distribution $\mathcal{N}(0, I_d)$. 
It is different from the finite-dimensional case \eqref{beta-finite-dim} in that:
\begin{enumerate}
    \item[(i)] for each particle $i$, the empirical measure $\frac{1}{N}\sum_{j=1}^N \delta_{ X^j_{t_k}}$ is used in the computation of $\beta^i$, reflecting the mean-field dependence;
    \item[(ii)] while the procedure may be repeated over multiple outer iterations, 
    the re-initialization at each iteration is taken directly from the terminal particle configuration of the preceding iteration 
    (without an additional averaging/coupling step), since the objective is to approximate an optimizing \emph{distribution} rather than a single point estimator.
\end{enumerate}

The main computational difficulty is that the global cloud weight 
in \eqref{beta} depends on the full random terminal configuration
 $\frac1N\sum_{j=1}^N\delta_{X^j_{t_k} + \sqrt{1-t_k} \xi^{j}}$. 
 Thus, at every time step and for every Monte Carlo sample, 
 one must evaluate $G$ on an $N$-particle empirical measure, 
 and the resulting Gibbs factor 
 $\exp\!\left(-\frac{N}{\eps}G\!\left(\frac1N\sum_{j=1}^N\delta_{X^j_{t_k}+ \sqrt{1-t_k} \xi^{j}}\right)\right)$ 
 accumulates fluctuations from all particles simultaneously. 
 When $N$ is large or $\eps$ is small, this produces 
 severe weight concentration 
 and a substantial increase in computational cost, especially for 
 interacting functionals $G$ whose evaluation already scales 
 superlinearly in $N$.

For a general functional $G:\Pcal\to\bR$, a natural particlewise localization 
is obtained by freezing the background empirical measure 
and varying only one particle. 
More precisely, for a background measure $\nu\in\Pcal$ and 
a candidate position $y\in\bR^d$, define the particlewise effective cost
\begin{equation}\label{NG-def}
    \mathcal G_N(y;\nu) := N\,G\!\left(\frac1N\delta_y + \frac{N-1}{N}\nu\right).
\end{equation}
Assume that $G$ admits a linear functional derivative 
$\frac{\delta G}{\delta \mu}(\nu)(\cdot)$ 
and a second-order expansion at $\nu$ in the direction of signed perturbations. 
Since $\frac{\delta G}{\delta \mu}(\nu)(\cdot)$ is defined only up to an additive constant, 
the combination appearing below is invariant under the choice of representative. Writing
$    \eta_N^{y,\nu}=\frac1N(\delta_y-\nu)$,
so that $\frac1N\delta_y+\frac{N-1}{N}\nu=\nu+\eta_N^{y,\nu}$, we obtain formally
\begin{equation}\label{NG-expansion}
    \mathcal G_N(y;\nu)
    = N G(\nu) + \left[\frac{\delta G}{\delta \mu}(\nu)(y) - \int_{\bR^d} \frac{\delta G}{\delta \mu}(\nu)(z)\,\nu(dz)\right] + O\!\left(\frac1N\right).
\end{equation}
Indeed,
\begin{equation*}
    G(\nu+\eta_N^{y,\nu})
    = G(\nu) + \int_{\bR^d} \frac{\delta G}{\delta \mu}(\nu)(z)\,\eta_N^{y,\nu}(dz) 
    + O\!\left(\frac{1}{N^2}\right),
\end{equation*}
and multiplying by $N$ yields \eqref{NG-expansion}. 
Hence the dominant term $N G(\nu)$ is independent of $y$ and 
cancels in the ratio defining $\beta^i$, while the remaining 
fluctuation is only of order $O(1)$. This suggests particle-wise approximation
\begin{equation}
    \beta^i(t_k,X_{t_k}^i)
    \approx
    \frac{\mathbb E\!\left[\exp\!\left(-\frac1\eps\mathcal G_N\big(\sqrt{1-t_k}\,\xi^i+x^i;\, \eta^i \big)
    \right)\,\big(\sqrt{1-t_k}\,\xi^i+x^i\big)\right]}
    {\mathbb E\!\left[\exp\!\left(-\frac1\eps\mathcal G_N\big(\sqrt{1-t_k}\,\xi^i+x^i;\,\eta^i\big)\right)\right]}
    \bigg|_{x^i=X_{t_k}^i, \, \eta^i=\nu_{t_k}^{N,-i}},
\end{equation}
where $\nu_{t_k}^{N,-i}:=\frac{1}{N-1}\sum_{j\neq i}\delta_{\widehat Y^{(i)}_j}$ with $\widehat Y^{(i)}_j$ being a sample of $X^j_{t_k} + \sqrt{1-t_k}\,\xi^j$ at time $t_k$ associated to particle $i$, 
is the empirical background seen by particle $i$. 
Heuristically, this is a frozen-background or propagation-of-chaos approximation 
with the other particles being treated through their empirical law. 
Note that even when $G$ is not smooth enough for the first-order expansion \eqref{NG-expansion}, $\mathcal G_N(y;\nu)$ remains a natural expression
 for isolating the local contribution of one particle and the above particle-wise approximation works.

In practice, one often employs a broader localized family of approximations. 
First, the background measure $\nu_{t_k}^{N,-i}$ may be replaced by a 
frozen proxy $\widehat \nu_{t_k}^{N,-i}$, for instance the current empirical 
swarm, a noisy forward projection of that swarm, or another cheaply computed 
proxy of the future crowd. 
Second, when evaluating $\mathcal G_N(y;\widehat \nu_{t_k}^{N,-i})$ remains expensive, 
one may replace the full background measure by a subsampled empirical one, 
i.e.,
\begin{equation}
    \widehat \nu_{t_k}^{N,-i,m} := \frac1m \sum_{r=1}^m \delta_{\widehat Y_{J_r}^{(i)}},
\end{equation}
where $J_1,\dots,J_m$ are sampled indices from the global cloud and $\widehat Y_{J_r}^{(i)}$ denotes a sample of $X^{J_r}_{t_k} + \sqrt{1-t_k}\,\xi^{J_r}$ at time $t_k$, associated to particle $i$. 
Then we use the quantity
\begin{equation}
    \mathcal G_{N,m}(y;\widehat \nu_{t_k}^{N,-i}) := N\,G\!\left(\frac1N\delta_y + \frac{N-1}{N}\widehat \nu_{t_k}^{N,-i,m}\right),
\end{equation}
or, more generally, any computational proxy 
$\widetilde{\mathcal G}_{N,m}(y;\widehat \nu_{t_k}^{N,-i})$ 
to approximate the local contribution of particle $i$. 
The corresponding Monte Carlo update then takes the generic form
\begin{equation}\label{eq:beta-particle-wise-general}
    \beta^i(t_k,X_{t_k}^i)
    \approx
    \frac{\sum_{l=1}^S \exp\!\left(-\frac1\eps\widetilde{\mathcal G}_{N,m}(Y_{i,l};\widehat \nu_{t_k}^{N,-i})\right) Y_{i,l}}
    {\sum_{l=1}^S \exp\!\left(-\frac1\eps\widetilde{\mathcal G}_{N,m}(Y_{i,l};\widehat \nu_{t_k}^{N,-i})\right)},
\end{equation}
where $Y_{i,l}=X_{t_k}^i+\sqrt{1-t_k}\,\xi_{i,l}$. 
The case $m=N-1$ and $\widetilde{\mathcal G}_{N,m}=\mathcal G_N$ 
recovers the global cloud weight with a frozen background measure, 
while smaller $m$ yields a further variance-cost tradeoff through subsampling. 
The four numerical examples considered below are all of this localized type. 
They all replace the global cloud weight by a particle-wise one, 
and the global cloud is subsampled when evaluating the interaction term,
 conditionally in Example \ref{exam:2d_spring} 
and explicitly in Examples \ref{exam:2d_newtonian}, \ref{exam:double_hula_hoop} 
and \ref{exm:generative_modeling_single_row}.

This type of localization is consistent with several established ideas 
in the literature. In high-dimensional sequential Monte Carlo, 
local or block particle filters replace global weights by localized weights precisely to avoid weight collapse 
and the curse of dimensionality; see \cite{rebeschini2015local,snyder2008obstacles}. 
From a statistical viewpoint,
 the same structural principle underlies mean-field variational approximations 
and particle/coordinate-wise updates for Gibbs-type models; see \cite{jordan1999introduction}. 
In our setting, the particle-wise approximation and its subsampled variants play the analogous role of 
replacing the full interacting cloud weight
 by a one-particle effective weight against a frozen background measure.

For clarity, Algorithm \ref{algorithm:mean-field} below is presented 
in its exact global cloud weight form, which discretizes as in \eqref{beta}. 
This exact version serves as the conceptual reference scheme. 
In actual implementations, one may replace the global weight in \eqref{beta} by the particle-wise approximation
 \eqref{eq:beta-particle-wise-general} whenever the exact evaluation of $G$
  becomes prohibitively expensive or numerically unstable. 
 The pseudocode is presented in Algorithm \ref{algorithm:mean-field} which we refer to 
 as the Stochastic Control Method (SCM) for Optimization on $\Pcal$. 
The output is an empirical measure approximating the optimal distribution.

\begin{algorithm}[H]
\caption{Stochastic Control Method for Optimization on $\Pcal$}
\label{algorithm:mean-field}
\begin{algorithmic}[1]
\State \textbf{Input:} Particles $N$, Horizon $T$, Steps $M$, Reg. $\eps$, Samples $S$, Iterations $L$.
\State \textbf{Initialize:} $X_0^{(i)} \leftarrow x_{\text{start}}$, $\Delta t \leftarrow T/M$.
\For{$l = 1$ to $L$}
\For{$m = 0$ to $M-1$}
\State $t \leftarrow m \Delta t$, $\tau \leftarrow T - t$
\For{$j = 1$ to $S$}
\State Sample $\xi_{1,j},\dots,\xi_{N,j} \sim \mathcal{N}(0, I_d)$ independently
\State Set $Y_{r,j} \leftarrow X_t^{(r)} + \sqrt{\tau}\,\xi_{r,j}$, for $r=1,\dots,N$
\State Form $\mu_{T}^{(j)} \leftarrow \frac{1}{N}\sum_{r=1}^N \delta_{Y_{r,j}}$
\State Compute weight $w_j \leftarrow \exp\left(-\frac{N}{\eps}G\left(\mu_T^{(j)}\right)\right)$
\EndFor
\For{$i = 1$ to $N$}
\State Compute $\beta_i \leftarrow \frac{\sum_{j=1}^S w_j Y_{i,j}}{\sum_{j=1}^S w_j}$
\State Compute drift $\hat{\theta}_i \leftarrow \frac{\beta_i - X_t^{(i)}}{\tau}$
\State Update $X_{t+\Delta t}^{(i)} \leftarrow X_t^{(i)} + \hat{\theta}_i \Delta t + \sqrt{\Delta t} Z^{(i)}$, for $Z^{(i)}\sim \mathcal{N}(0, I)$
\EndFor
\EndFor
\State $X_0^{(i)} \leftarrow X_T^{(i)}$ \Comment{Recycle terminal state}
\EndFor
\State \textbf{Output:} $\mu^* \approx \frac{1}{N} \sum \delta_{X_T^{(i)}}$
\end{algorithmic}
\end{algorithm}

To bridge the gap between the idealized formulation in Algorithm \ref{algorithm:mean-field}
 and its numerical implementations, some other techniques are incorporated. 
 Although the pseudocode assumes a fixed time step $\Delta t = T/M$ and 
 a constant number of iterations $L$, practical applications may adopt adaptive, multi-phase scheduling, beginning with a standard time horizon ($T=1$) for global exploration
  and followed by refinement phases over shorter horizons (e.g., T=0.001) with finer time steps (e.g., $\Delta t=10^{-6}$),
   thereby enhancing stability during convergence to complex target distributions. 
   Furthermore, direct evaluation of exponential weights $\exp(-\mathcal{C}/\varepsilon)$
    is numerically infeasible for extremely small $\varepsilon$ (e.g., $10^{-299}$)
     due to floating-point underflow; this is mitigated by employing the Log-Sum-Exp technique as in Algorithm \ref{alg:particle_method}, 
     which performs computations in the logarithmic domain and normalizes exponents prior to exponentiation, 
     ensuring numerical stability in near-zero regularization regimes.

To illustrate the performance of Algorithm \ref{algorithm:mean-field}, we consider the following examples.

\begin{exam}[2D Newtonian Swarm (Circle Law)]\label{exam:2d_newtonian}
Consider the minimization of the energy functional associated with Newtonian interaction in 2D:
\begin{equation}
    G(\mu) =  \iint_{\bR^{2d}} \left(\frac{|x-y|^2}{2}-\ln(|x-y|)\right) \, d\mu(x) \, d\mu(y).
\end{equation}
 
In the numerical experiment reported in Figure~\ref{fig:2d_newtonian}, we take $N=200$ particles and discretize the time interval $[0,T]$ with $M=1000$ steps, where $T=1$. 
The regularization parameter is set to be $\eps=10^{-10}$, and the conditional expectations in \eqref{beta} are approximated 
using $S=100$ Monte Carlo samples. We perform a single outer iteration ($L=1$) and initialize all particles at the origin.

For this purely interaction-driven energy, the global minimizer is not unique but known explicitly: 
it is the uniform probability measure supported on a unit circle, with minimal energy equal to $0.75$, 
commonly referred to as the \emph{circle law} \cite{carrillo2020ellipse}. 
Starting from the initial distribution concentrated at the origin, the proposed SCM 
drives the particle system to expand radially and, by terminal time, produces an empirical distribution that closely approximates the uniform measure on the unit circle.
\begin{figure}[htbp]
    \centering
     \includegraphics[width=0.9\textwidth]{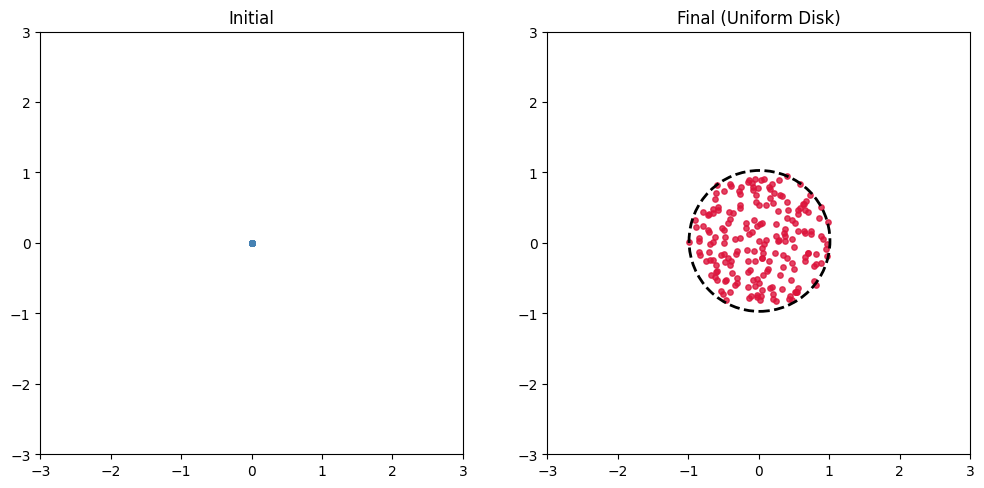}
    \caption{2D Newtonian Swarm Optimization. Left: Initial concentrated distribution at the origin. Right: Final distribution approximating the uniform measure on the unit circle, 
    the global minimizer of the Newtonian interaction energy.}
    \label{fig:2d_newtonian}
\end{figure}

To corroborate the convergence rate asserted in Theorem~\ref{thm:regularization_error}, 
we perform a numerical study in which the particle number $N$ is varied in $\{10, 20, 25, 50, 100, 125, 200, 250, 400, 500, 800, 1000\}$ and the corresponding Monte Carlo estimates of $\frac{1}{N} v^N_{\eps}(0, \mathbf{x})$ minus the minimum value ($=0.75$) of $G$ are plotted as a function of $1/N$; 
see Figure~\ref{fig:convergence_rate}. The resulting data exhibit an approximately linear dependence on $1/N$, and a least-squares fit provides strong empirical support for an $\mathcal{O}(1/N)$ finite-particle error. 
Moreover, extrapolating the fitted line to the limit $N\to\infty$ yields a vanishing asymptotic error.
\begin{figure}[htbp]
    \centering
     \includegraphics[width=0.6\textwidth]{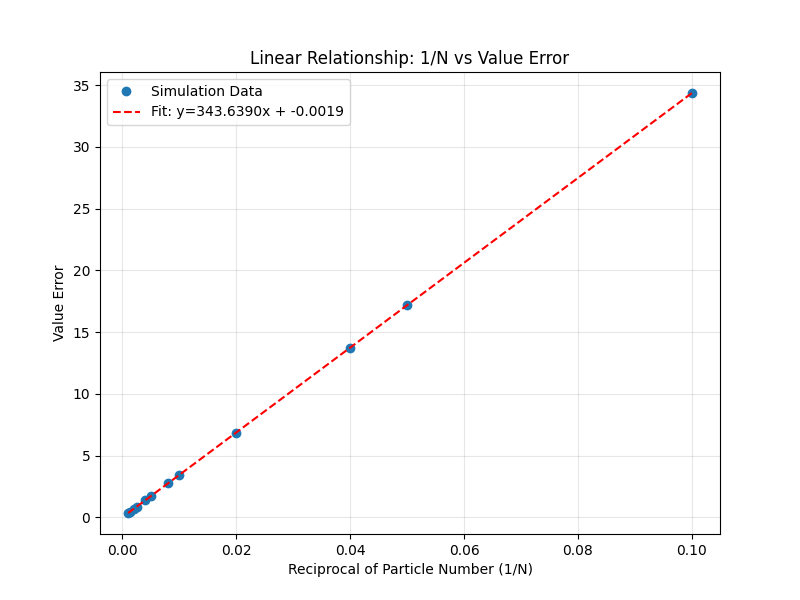}
    \caption{Convergence Rate of Value Errors. The plot of value error versus $\frac{1}{N}$ shows a linear trend, confirming the $O(1/N)$ convergence rate predicted by Theorem \ref{thm:value_convergence}.}
    \label{fig:convergence_rate}
\end{figure} 
\end{exam}  

In the above 2D Newtonian swarm example, it is not appropriate to study the sensitivity with respect to the regularization parameter $\eps$ because the particle approximation error also depends on $\eps$; see Theorem \ref{thm:value_convergence}. 
To isolate the regularization error, we consider the following example that satisfies the regularity, averaged convexity, and coercivity assumptions in Proposition \ref{prop-well-posedness-mfc} under which the constant $L_{\eps}$ in Thoerem \ref{thm:value_convergence} is independent of $\eps$.

\begin{exam}[2D Spring Swarm]\label{exam:2d_spring}
Consider the minimization of the following functional:
\begin{equation}
    G(\mu) =
    \frac{1}{2} \iint_{\mathbb{R}^2 \times \mathbb{R}^2} |x-y|^2 \, d\mu(x) \, d\mu(y).
    \end{equation}
    This quadratic interaction functional is minimized by any Dirac measure $\mu^*=\delta_{x^*}$, $x^*\in\bR^2$, and the minimal value is $G(\mu^*)=0$.

    To examine the impact of the regularization parameter $\eps$ on the numerical accuracy, we fix the particle number at $N=400$ and initialize all particles at the origin. 
    We discretize the time interval $[0,1]$ using a uniform step size $\Delta t=0.001$, and approximate the conditional expectations in \eqref{beta} with $S=100$ Monte Carlo samples at each time step. 
    We then consider $41$ values of $\eps$ evenly spaced on $[0.005,0.2]$, and for each $\eps$ compute a Monte Carlo estimate of the finite-particle value (at $t=0$) and its deviation from the minimal energy level $0$. 
    The resulting value errors are plotted against $\eps$ and $-\eps\ln(\eps)$ in Figure~\ref{fig:reg_error}. The observed scaling is consistent with the $\mathcal{O}(\eps\ln(1/\eps))$ regularization error predicted by Theorem~\ref{thm:regularization_error}.
\begin{figure}[htbp]
    \centering
     \includegraphics[width=0.9\textwidth]{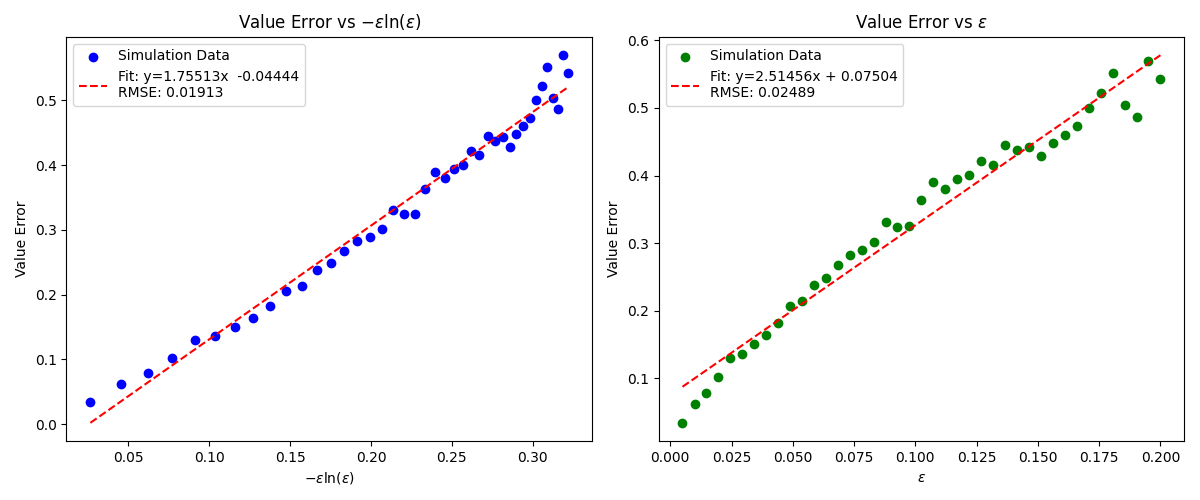}
    \caption{Regularization Error of value error. Left: Plot of value error versus $\eps \ln(1/\eps)$ shows a linear trend, supporting the $\mathcal{O}(\eps \ln(1/\eps))$ error estimate. Right: Plot of value error versus $\eps$. 
    The linear trend in the left panel provides a better ﬁt, as indicated by the lower
RMSE, corroborating the theoretical convergence rate established in Theorem \ref{thm:regularization_error}.}
    \label{fig:reg_error}
\end{figure}    
\end{exam}

In the above examples, all the particles are initialized at the same location, which facilitates the investigation of the convergence rate with respect to the particle number $N$ and the regularization parameter $\eps$. 
In the subsequent example, we consider a more general initialization with non-smooth functional and the minimizer may be a measure supported on disjoint sets.

\begin{exam}[Double Hula Hoop]\label{exam:double_hula_hoop}
We consider the minimization of the following functional:
\begin{equation}
    G(\mu) = {\int_{\mathbb{R}^2} V(x) \, d\mu(x)}
    + {\frac{1}{2} \iint_{\mathbb{R}^2 \times \mathbb{R}^2} \Phi(x-y) \, d\mu(x) \, d\mu(y)},
\end{equation}
where the confinement potential $V$ and interaction potential $\Phi$ are given by
\begin{align*}
V(x) &= \min \left( \frac{1}{2} \big( |x - c_L|^2 - R^2 \big)^2, \quad \frac{1}{2} \big( |x - c_R|^2 - R^2 \big)^2 \right)\, , \\
\Phi(x)&= - \ln(|x|),
\end{align*}
for $c_L = (-2,0)$, $c_R = (2,0)$, and $R=1$. The confinement potential $V$ has two wells centered at $c_L$ and $c_R$, each shaped like a hula hoop of radius $R$.
The interaction potential $\Phi $ is the Newtonian potential in 2D, which encourages particles to spread out. By theories of equilibrium measures in \cite{saff1997logarithmic}, 
the minimizer forms two disjoint, thickened annular bands with density increasing from the inner to the outer rim, achieving a global minimum energy where the 
outward pressure of logarithmic repulsion exactly balances the inward force of the quartic confinement; as the confinement potential is not smooth, it is not straightforward to derive an explicit formula for the density profile. 
In the numerical experiment reported in Figure~\ref{fig:double_hula_hoop}, we take $N=400$ particles and discretize the time interval $[0,1]$ with $M=1000$ steps. 
The regularization parameter is set to $\eps=10^{-299}$, and the conditional expectations in \eqref{beta} are approximated 
using $S=100$ Monte Carlo samples. We perform a single outer iteration ($L=1$) and initialize all particles  with standard normal distribution $N(0,1)$ on both the x and y axes.
The proposed SCM 
successfully drives the particle system to approximate the minimizer supported  around the two circles by terminal time.
\begin{figure}[htbp]
    \centering
     \includegraphics[width=0.9\textwidth]{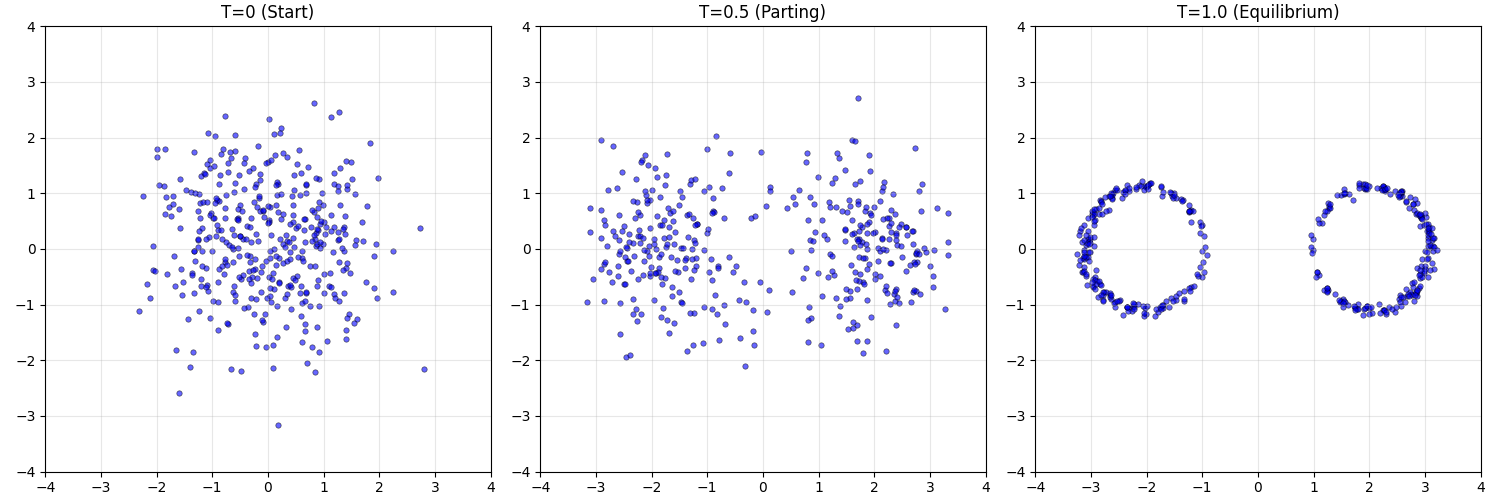}
    \caption{Double Hula Hoop Optimization. Left: Initial Gaussian distribution. Middle: Particles parting at $t=0.5$.  Right: Final distribution approximating the minimizer supported around the two circles.}
    \label{fig:double_hula_hoop}
\end{figure}    
\end{exam}  

The above example further inpires us to apply the SCM to sampling and generative problems. 
In generative modeling and aritificial intelligence (AI),  a fundamental problem is: 
given an empirical dataset $\mathbf{A} = \{x^1, \dots, x^J\} \subset \mathbb{R}^d$ 
which defines an underlying distribution $\hat{\mu} = \frac{1}{J} \sum_{j=1}^J \delta_{x^j}$, 
how can one generate a new set of data $\mathbf{D} = \{y^1, \dots, y^N\}$ such that the distribution of $\mathbf{D}$ 
is equal to or close to that of $\mathbf{A}$? This is nothing but a measure-matching question, 
an optimization problem on space of probability measures with the objective functional 
defined as a distance or divergence between probability measures, such as the Wasserstein distance, maximum mean discrepancy, and the Kullback-Leibler divergence.
Note that $J$ and $N$ need not be equal, allowing for data upsampling or compression. 
\begin{exam}[Generative Modeling]\label{exm:generative_modeling_single_row}
Taking $d=2$, we consider the minimization of the following distance functional:
\begin{equation*}
G(\mu) = \int_{\mathbb{R}^2} \min_{x\in\mathbf{A}} |x-y|^2 \, \mu(dy) 
- \frac{\alpha}{2} 
\int_{\mathbb{R}^2\times\mathbb{R}^2} \ln(|x-y|^2) \, \mu(dx) \mu(dy),
\end{equation*}
where $\alpha = 3\times 10^{-9}$, the first term encourages the generated data to be close to the dataset $\mathbf{A}$, 
and the second term encourages the generated samples to spread out.  
In the numerical experiment reported in Figure~\ref{fig:generative_modeling_single_row}, 
the target dataset $\mathbf{A}$ is a two-separate-horse silhouette consisting of $12{,}000$ points, 
while the generation is initialized from a snake-shaped configuration with $100{,}000$ points. 
We employ $N=100{,}000$ particles and discretize the time interval $[0,1]$ using $M=10{,}000$ time steps. 
The regularization parameter is set to $\eps=10^{-15}$, and the conditional expectations in \eqref{beta} are approximated with $S=100$ Monte Carlo samples. 
We first run a single outer iteration ($L=1$), and then recycle the resulting terminal particle configuration for two additional refinement iterations carried out 
over a shorter time horizon of length $0.001$ with time step $dt=10^{-6}$. 
The proposed SCM drives the particle system from the initial snake-shaped distribution toward an approximation of the target two-separate-horse distribution, 
achieving a high-fidelity reconstruction with an improved resolution;
see Figure~\ref{fig:generative_modeling_single_row}.
\begin{figure}[htbp]
    \begin{tabular}{@{{\hspace{-0.01\textwidth}}}c@{\hspace{-0.05\textwidth}}c@{\hspace{-0.05\textwidth}}c@{\hspace{-0.05\textwidth}}c@{\hspace{-0.05\textwidth}}c@{}}
    \includegraphics[width=0.38\textwidth]{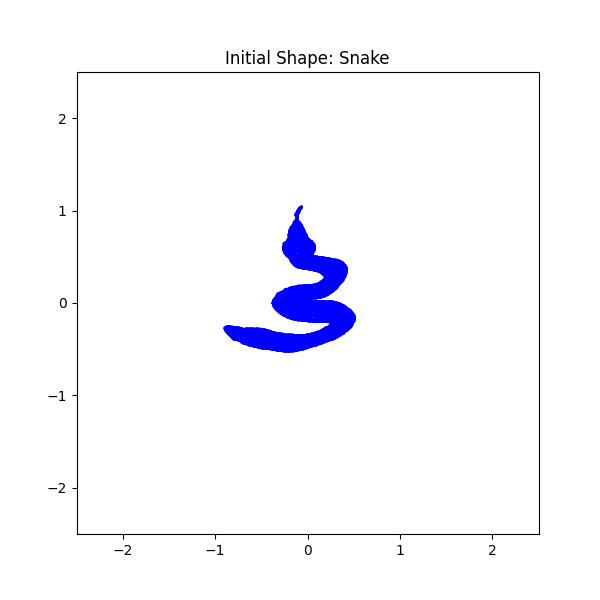} &
    \includegraphics[width=0.38\textwidth]{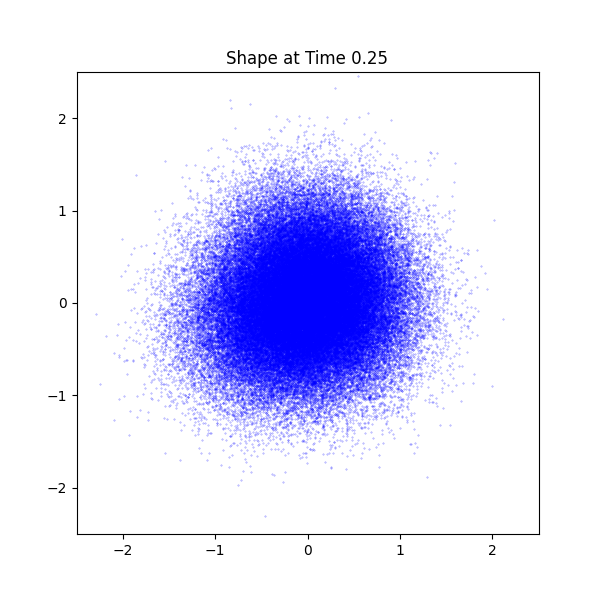} &
    \includegraphics[width=0.38\textwidth]{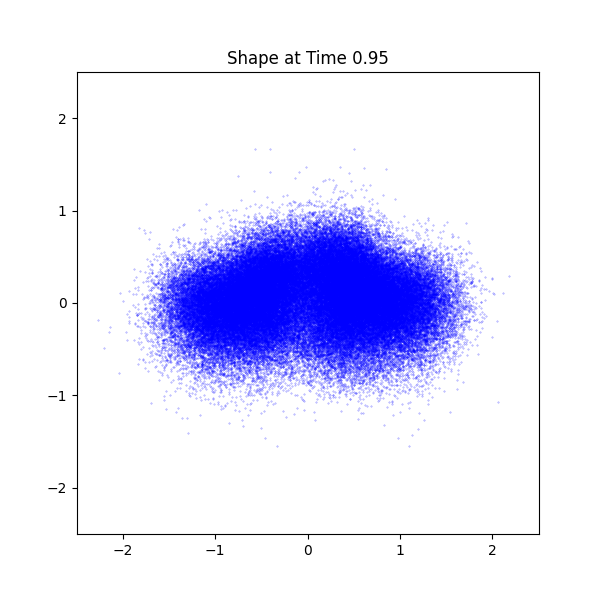} \\
    \includegraphics[width=0.38\textwidth]{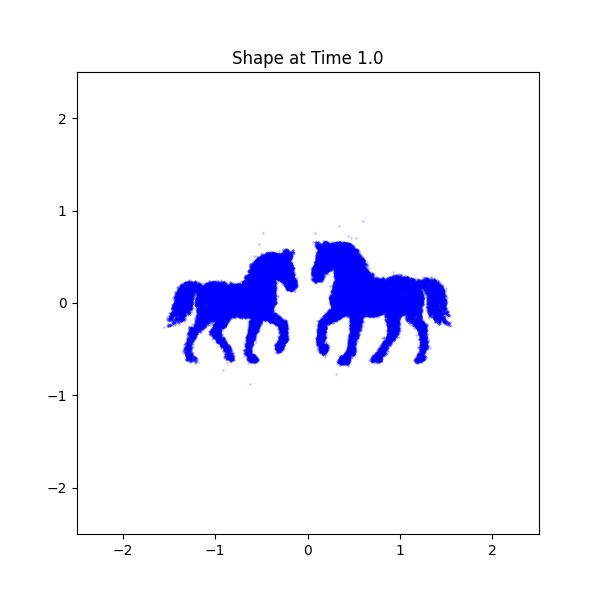} &
    \includegraphics[width=0.38\textwidth]{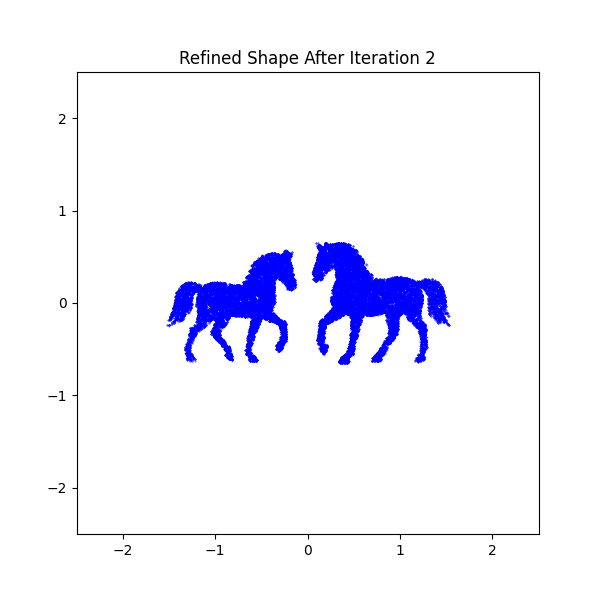} &
    \includegraphics[width=0.38\textwidth]{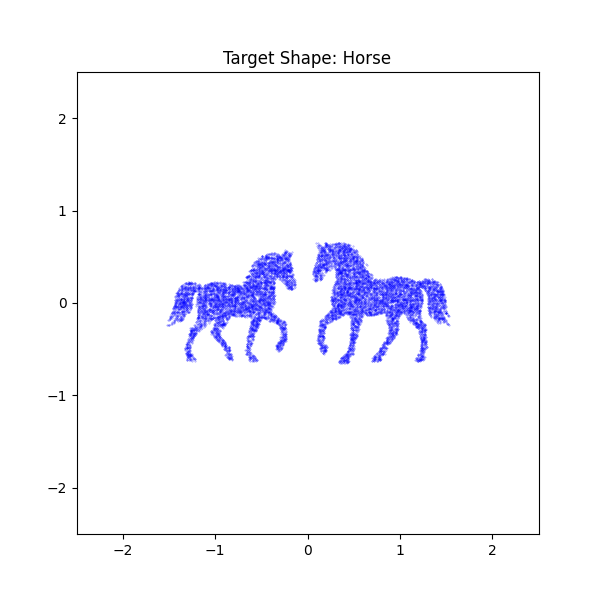} 
    \end{tabular}
    \caption{Generative Modeling. The three frames of the first row showing the evolution of the particle distribution from initial shape of a snake (left) through two intermediate steps at $t=0.25$ (middle) and $t=0.95$ (right). 
    The three frames of the second row showing the particle distribution at time $t=1.0$ (left), 
    the refined particle distribution (middle) with two iterations, 
    and the target empirical distribution (right) for comparison.}
    \label{fig:generative_modeling_single_row}
\end{figure}
\end{exam}
In view of the above example, 
the proposed SCM potentially provides a training-free and purely forward simulation-based alternative to diffusion models and Schr\"odinger bridge (SB) approaches for generative modeling. 
In contrast to diffusion models, which typically require an expensive offline training to learn a global score (see, e.g., \cite{yang2023diffusion}), 
SCM evaluates the optimal generative drift online through explicit probabilistic representations. 
Recent developments such as soft-constrained Schr\"odinger bridges \cite{garg2024soft,ma2025schrodinger} alleviate the numerical instability associated with hard terminal constraints in classical SB 
by introducing penalty-based McKean-Vlasov formulations; however, these methods remain fundamentally rooted in potential-based constructions 
(e.g., via Doob's $h$-transform) and often necessitate solving coupled forward-backward systems through global fixed-point iterations. 
By contrast, SCM avoids such forward-backward coupling. Indeed, by utilizing particle approximations together with the Cole-Hopf transformation,  
the Feynman-Kac representation, and the Bismut-Elworthy-Li formula, 
it yields an explicit representation of the generative drift and thereby reduces the computation to a single forward pass without training.

\section{Conclusion}\label{sec:conclusion}
This paper develops a stochastic-control-based framework for global optimization 
by casting the original problem as the vanishing-regularization limit of 
a family of stochastic control problems.  The objective functionals may be non-convex, non-differentiable, and defined on either finite-dimensional Euclidean spaces or the Wasserstein spaces of probability measures.
This formulation enables the use of dynamic programming methods and, in finite dimensions, the Cole-Hopf transformation to linearize the associated HJB equation.  
The resulting representations via Feynman-Kac and the Bismut-Elworthy-Li formula yield derivative-free expressions for both value functions and optimal controls.

For optimization over $\bR^d$, we establish convergence of the regularized control value to the global minimum at rate $\eps \ln(1/\eps)$ as $\eps\to0$.  
For optimization over Wasserstein space $\Pcal$, we introduce a regularized mean-field control formulation and approximate it by controlled $N$-particle systems.  
We prove quantitative convergence to the global optimum with rate
$
\frac{L_\eps}{N} + C\eps \ln\!\left(\frac{1}{\eps}\right),
$
as $N\to\infty$ and $\eps\to0$.

Based on the probabilistic representations, we further propose derivative-free numerical schemes and present computational experiments that support the theoretical findings.  
Reproducible code is available at:
\url{https://github.com/Jinnqiu/Stochastic-Control-Methods-for-Optimization}.
The numerical studies are intended as proof-of-concept illustrations rather than fully optimized implementations.  
A comprehensive computational study, including systematic parameter tuning, detailed benchmarking against established global optimization methods, and further applications in generative modeling and AI, is left for future work.

 \appendix
\section{Proof of Proposition \ref{prop-well-posedness-mfc}}\label{app:well-posedness_mfc}

\begin{proof}
Under the assumptions (a) (Regularity) and (b) (Averaged Convexity) in Proposition \ref{prop-well-posedness-mfc}, the  stochastic Pontryagin maximum principle with both necessary and sufficient conditions established in \cite[Theorems 4.5, 4.6 \& 5.1]{carmona2015forward} implies that, 
for each $(t,\mu)\in [0,1)\times \Pcal$, $\xi \sim \mu$, 
the optimal control $\theta^*$ exists uniquely, and the optimal state process $X^{t,\xi;\theta^*}$ satisfies uniquely the following FBSDE:
\begin{equation} \label{eq:fbsde}
    \begin{cases}
        dX^{t,\xi;\theta^*}_s = -\frac{1}{\eps} Y^{t,\xi;\theta^*}_s \, ds + dW_s, \quad X^{t,\xi;\theta^*}_t=\xi \sim \mu, \\
        dY^{t,\xi;\theta^*}_s = Z^{t,\xi;\theta^*}_s \, dW_s, \quad Y^{t,\xi;\theta^*}_1 = \partial_\mu G(\mathcal{L}(X^{t,\xi;\theta^*}_1))(X^{t,\xi;\theta^*}_1),\\
        \theta^*_s = - \frac{1}{\eps} Y^{t,\xi;\theta^*}_s, \quad \forall\, s\in[t,1],
    \end{cases}
\end{equation}
which, combined with the theories in \cite[Section 5.3 and Theorem 5.7]{chassagneux2014probabilistic}, implies that the value function $\mathcal V_{\eps}$ is a classical solution to the master equation \eqref{Master-eq} satisfying Assumption \ref{ass:smoothness} and that 
there holds the decoupling relation $Y^{t,\xi;\theta^*}_s = \partial_{\mu}\mathcal V_{\eps}(s, \mathcal L(X^{t,\xi;\theta^*}_s))(X^{t,\xi;\theta^*}_s)$ for all $s\in[t,1]$.  By Lemma \ref{lem:uniqueness}, the classical solution is unique.

It remains to justify the $\eps$-uniform boundedness estimate \eqref{boundedness-Lions-derivative}. Inspired by \cite[Chapter 2, Volume II]{carmona2018probabilistic}, we may derive the higher-order regularity of $\mathcal V_{\eps}$
 by differentiating FBSDE \eqref{eq:fbsde}. 

Let $\xi \sim \mu$. First,  differentiating the FBSDE \eqref{eq:fbsde} with respect to the spatial initial condition $x$ gives (cf.\cite[Eq (5.65), Volume II]{carmona2018probabilistic})
\begin{equation*}
    \begin{cases}
        d\big(\partial_x X^{t,x;\theta^*}_s\big)
        = -\dfrac{1}{\eps}\,\partial_x Y^{t,x;\theta^*}_s\,ds,
        \qquad \partial_x X^{t,x;\theta^*}_t = I_d, \\[0.6em]
        d\big(\partial_x Y^{t,x;\theta^*}_s\big)
        = \partial_x Z^{t,x;\theta^*}_s\, dW_s, \qquad
        \partial_x Y^{t,x;\theta^*}_1
        = \partial_x \partial_\mu G\!\left(\mathcal{L}\!\left(X^{t,\xi;\theta^*}_1\right)\right)\!\left(X^{t,x,\xi;\theta^*}_1\right)\,
        \partial_x X^{t,x;\theta^*}_1 ,
    \end{cases}
\end{equation*}
where $X^{t,\xi;\theta^*}_1$ satisfies the original FBSDE \eqref{eq:fbsde} while $X_s^{t,x,\xi;\theta^*}$ together with the pair $(Y_s^{t,x,\xi;\theta^*},Z_s^{t,x,\xi;\theta^*})$ satisfies FBSDE
\begin{equation*}
    \begin{cases}
        dX^{t,x,\xi;\theta^*}_s = -\frac{1}{\eps} Y^{t,x,\xi;\theta^*}_s \, ds + dW_s, \quad X^{t,x,\xi;\theta^*}_t=x, \\
        dY^{t,x,\xi;\theta^*}_s = Z^{t,x,\xi;\theta^*}_s \, dW_s, 
        \quad Y^{t,x,\xi;\theta^*}_1 = \partial_\mu G(\mathcal{L}(X^{t,\xi;\theta^*}_1))(X^{t,x,\xi;\theta^*}_1).
    \end{cases}
\end{equation*}

Fix an arbitrary $h\in\bR^d$. Applying It\^o's formula to the scalar product $\langle \partial_x X^{t,x;\theta^*}_s, \partial_x Y_s^{t,x;\theta^*} \rangle$ and taking conditional expectation, we obtain
{\small
\begin{align}
    \langle \partial_x X^{t,x;\theta^*}_sh, \partial_x Y_s^{t,x;\theta^*} h\rangle 
    &= \mathbb{E} \Big[ \underbrace{\langle \partial_x X_1^{t,x;\theta^*}h,\,  \partial_x \partial_\mu G(\mathcal{L}(X^{t,\xi;\theta^*}_1))(X^{t,x,\xi;\theta^*}_1) \partial_x X_1^{t,x;\theta^*}h \rangle }_{\ge 0 \text{ by Coercivity}} 
    + \frac{1}{\eps}   \int_s^1 |\partial_x Y_r^{t,x;\theta^*} h|^2 \, dr \big|\mathcal F_s \Big] ,
    \nonumber\\
    &\geq 0, \text{ a.s. } \quad \forall\, s\in[t,1]. \label{eq:pos-inner-prod-x}
\end{align}
}
Applying the chain rule further yields
\begin{align}
    d(|\partial_x X^{t,x;\theta^*}_sh|^2) = 2 \langle \partial_x X_s^{t,x;\theta^*}h, d(\partial_x X_s^{t,x;\theta^*}) h\rangle = -\frac{2}{\eps} \langle \partial_x X_s^{t,x;\theta^*}h, \partial_x Y_s^{t,x;\theta^*} h\rangle \, ds 
    \leq 0. \label{ineq-leq-0}
\end{align}
Recall $\partial_x X^{t,x;\theta^*}_t = I_d$. Thus, it holds that $|\partial_x X_s^{t,x;\theta^*}|_{op} \leq |I_d|_{op} = 1$ for all $s\in[t,1]$. Looking at $\partial_xY^{t,x;\theta^*}$, we have
\begin{align*}
    |\partial_x\partial_{\mu} \mathcal{V}_{\eps}(t, \mu) (x)|_{op}
    =
        |\partial_x Y_t^{t,x;\theta^*}|_{op} & = \left|\E\left[  \partial_x (\partial_\mu G(\mathcal{L}(X^{t,\xi;\theta^*}_1))(X^{t,x,\xi;\theta^*}_1)) \cdot \partial_x X^{t,x;\theta^*}_1 \big|\mathcal F_t \right]\right|_{op} \\
        &\leq K_G \E[|\partial_x X^{t,x;\theta^*}_1|_{op} | \mathcal F_t]\\
        &\leq K_G ,
\end{align*}
which yields the desired bound for $\partial_x \partial_\mu \mathcal V_{\eps}$.

Let us differentiate the FBSDE \eqref{eq:fbsde} with respect to the measure $\mu$. 
Let $\phi \in L^2(\mathbb{R}^d, \mu; \mathbb{R}^d)$ be a vector field representing the direction of the perturbation. 
This induces a perturbation of the initial random variable $\xi$ given by $\delta X^\phi_t = \phi(\xi) \in L^2(\Omega;\mathbb{R}^d)$.
 The tangent process $(\delta X_s, \delta Y_s, \delta Z_s)$ is defined as the $L^2(\Omega)$-limit of the difference quotients:
{\small\[
        \delta X_s := \lim_{r \to 0} \frac{X_s^{t,\xi+r\phi(\xi);\theta^r} - X_s^{t,\xi;\theta^*}}{r}, \quad \delta Y_s := \lim_{r \to 0} \frac{Y_s^{t,\xi+r\phi(\xi);\theta^r} - Y_s^{t,\xi;\theta^*}}{r}, \delta Z_s := \lim_{r \to 0} \frac{Z_s^{t,\xi+r\phi(\xi);\theta^r} - Z_s^{t,\xi;\theta^*}}{r}   ,
    \]
} where $\theta^r$ is the optimal control associated to the initial condition $\xi + r \phi(\xi)$. Differentiating the FBSDE coefficients leads to the following linearized system  
    \begin{equation} \label{eq:tangent_lqr}
        \begin{cases}
            d(\delta X_s) = -\frac{1}{\eps} \delta Y_s \, ds, \quad \delta X_t = \phi(\xi), \\
            d(\delta Y_s) = \delta Z_s \, dW_s, \\
            \delta Y_1 = \partial_x \partial_\mu G(\mathcal{L}(X^{t,\xi;\theta^*}_1))(X^{t,\xi;\theta^*}_1) \cdot \delta X_1 + \tilde{\mathbb{E}}[\partial^2_{\mu\mu} G(\mathcal{L}(X^{t,\xi;\theta^*}_1))(X^{t,\xi;\theta^*}_1) (\tilde{X}^{t,\xi;\theta^*}_1) \cdot \delta \tilde{X}_1].
        \end{cases}
    \end{equation}
    Applying It\^o's formula to the scalar product $\langle \delta X_s, \delta Y_s \rangle$, we have
\begin{align*}
   d\big( \langle \delta X_s, \delta Y_s \rangle\big)
    &= - \frac{1}{\eps}  |\delta Y_s|^2 \,ds
    + \langle \delta X_s, \delta Z_s \, dW_s \rangle.
\end{align*}
To obtain an $\eps$-uniform estimate, we apply the chain rule to the process $s\mapsto |\delta X_s|$:
\begin{equation}\label{eq:chain_rule-L1}
    d|\delta X_s| = \frac{1_{|\delta X_s|>0}}{|\delta X_s|} \langle \delta X_s, d(\delta X_s) \rangle = -\frac{1_{|\delta X_s|>0}}{\eps|\delta X_s|}\langle \delta X_s,\delta Y_s\rangle\,ds,
\end{equation}
and further,
\begin{align}
    d\left(\frac{1_{|\delta X_s|>0}}{|\delta X_s|}\langle \delta X_s, \delta Y_s \rangle\right)  
    &
    =   \frac{1_{|\delta X_s|>0}}{|\delta X_s|}d\big(\langle \delta X_s, \delta Y_s \rangle\big) - \frac{1_{|\delta X_s|>0}}{|\delta X_s|^2} \langle \delta X_s, \delta Y_s\rangle d |\delta X_s| \nonumber \\ 
    &= 
    - \frac{1_{|\delta X_s|>0}}{\eps|\delta X_s|}  |\delta Y_s|^2 \,ds
    + \frac{1_{|\delta X_s|>0}}{|\delta X_s|} \langle \delta X_s, \delta Z_s \, dW_s \rangle
    + \frac{1_{|\delta X_s|>0}}{\eps|\delta X_s|^3} \langle \delta X_s, \delta Y_s\rangle^2 ds
    \nonumber\\
    &\leq  
         \frac{1_{|\delta X_s|>0}}{|\delta X_s|} \langle \delta X_s, \delta Z_s \, dW_s \rangle,
      \label{eq:chain_rule-L1-2}
\end{align}
where in the last inequality we have used the Cauchy-Schwarz inequality: $|\langle \delta X_s, \delta Y_s \rangle | \leq |\delta X_s| \cdot |\delta Y_s|$.
Therefore, the process $\left\{\frac{1_{|\delta X_s|>0}}{|\delta X_s|}\langle \delta X_s, \delta Y_s \rangle\right\}_{s\in[t,1]}$ is a supermartingale. 
Noticing that by the averaged convexity assumption on $G$, 
\begin{align*}
    \E\left[\frac{1_{|\delta X_1|>0}}{|\delta X_1|}\langle \delta X_1, \delta Y_1 \rangle\right] 
    &= \E\left[\frac{1_{|\delta X_1|>0}}{|\delta X_1|}\langle \delta X_1, \partial_x \partial_\mu G(\mathcal{L}(X^{t,\xi;\theta^*}_1))(X^{t,\xi;\theta^*}_1) \cdot \delta X_1\rangle\right]
    \\
    &\quad 
    + \E\left[\frac{1_{|\delta X_1|>0}}{|\delta X_1|}\Big\langle \delta X_1, \widetilde{\E}\big[\partial^2_{\mu\mu} G(\mathcal{L}(X^{t,\xi;\theta^*}_1))(X^{t,\xi;\theta^*}_1)(\widetilde X^{t,\xi;\theta^*}_1) \cdot \delta \widetilde X_1\big]\Big\rangle\right] 
    \\
    & \ge 0,
\end{align*}
we have by the supermartingale property that for all $s\in[t,1]$, 
\begin{equation*}
     \E\left[\frac{1_{|\delta X_s|>0}}{|\delta X_s|}\langle \delta X_s, \delta Y_s \rangle\right] \ge 0.
\end{equation*}
Then, integrating \eqref{eq:chain_rule-L1} from $t$ to $1$ and taking the expectations, we have
\begin{equation}\label{estimate-deltaX}
    \E\big[|\delta X_1|\big]  = \E\big[|\phi(\xi)|\big] -\frac{1}{\eps} \int_t^1 \E\left[\frac{1_{|\delta X_s|>0}}{|\delta X_s|}\langle \delta X_s,\delta Y_s\rangle\right]ds
    \le \E\big[|\phi(\xi)|\big].
\end{equation}

Then, we investigate the directional derivative of $\mathcal V_{\eps}$ in the direction $\phi$. 
In view of \cite[Eq. (5.59), Volume II]{carmona2018probabilistic}, denote the associated directional derivatives by $\delta_{\phi}X_s$, $\delta_{\phi}Y_s$, $\delta_{\phi}Z_s$, which satisfy the following FBSDE
 {\small   \begin{equation} \label{eq:deriv_mu_lqr}
        \begin{cases}
            d(\delta_{\phi} X_s) 
            = -\frac{1}{\eps} \delta_{\phi} Y_s \, ds, \quad \delta_{\phi} X_t = 0, \\
            d(\delta_{\phi} Y_s) = \delta_{\phi} Z_s \, dW_s, \\
            \delta_{\phi} Y_1 = 
            \partial_x \partial_\mu G(\mathcal{L}(X^{t,\xi;\theta^*}_1))(X^{t,x,\xi;\theta^*}_1) \cdot \delta_{\phi} X_1 
            + \tilde{\mathbb{E}}[\partial^2_{\mu\mu} G(\mathcal{L}(X^{t,\xi;\theta^*}_1))(X^{t,x,\xi;\theta^*}_1) (\tilde{X}^{t,\xi;\theta^*}_1) \cdot \delta \tilde{X}_1].
        \end{cases}
    \end{equation} 
 }
Applying It\^o's formula to $\langle \delta_{\phi} X_s, \delta_{\phi} Y_s \rangle$ and taking expectations give
\begin{align*}
    \E\left[\langle \delta_{\phi} X_1, \delta_{\phi} Y_1 \rangle\right] 
    &  
    = -\frac{1}{\eps} \E\left[\int_t^1 |\delta_{\phi} Y_s|^2 ds \right] + \E\left[ \langle \delta_{\phi} Y_t,\,\delta_{\phi} X_t\rangle \right] 
    = -\frac{1}{\eps} \E\left[\int_t^1 |\delta_{\phi} Y_s|^2 ds \right] \leq 0.
\end{align*}
On the other hand, the boundedness of $\partial_{\mu\mu}^2 G$ and the coercivity assumption yield that
{\small
\begin{align*}
    0\geq \E\left[\langle \delta_{\phi} X_1, \delta_{\phi} Y_1 \rangle\right] 
     &
    = \E\left[ \langle \delta_{\phi} X_1, \partial_x \partial_\mu G(\mathcal{L}(X^{t,\xi;\theta^*}_1))(X^{t,x,\xi;\theta^*}_1) \cdot \delta_{\phi} X_1 \rangle\right] 
    \\
    &\quad + \E\left[ \langle \delta_{\phi} X_1, \tilde{\mathbb{E}}[\partial^2_{\mu\mu} G(\mathcal{L}(X^{t,\xi;\theta^*}_1))(X^{t,x,\xi;\theta^*}_1) (\tilde{X}^{t,\xi;\theta^*}_1) \cdot \delta \tilde{X}_1] \rangle\right] 
    \\
    &\geq \kappa \, \E\left[ |\delta_{\phi} X_1|^2 \right] 
    - K_G \, \E\left[ |\delta_{\phi} X_1|\right] \cdot \tilde \E \left[|\delta \tilde{X}_1| \right],
\end{align*}
}
which together with \eqref{estimate-deltaX} implies that 
$\sqrt{\E\left[|\delta_{\phi} X_1|^2\right]} \leq \frac{K_G}{\kappa} \E\left[|\phi(\xi)|\right]$. Hence, by the BSDE for $\delta_{\phi} Y_t$, we have
\begin{align*}
    &|\E \left[\partial^2_{\mu\mu} \mathcal{V}_{\eps}(t,\mu)(x)(\xi) \cdot \phi(\xi) \right]|
    = |\E[\delta_{\phi} Y_t]| \\
    & = \left|\E\left[  \partial_x \partial_\mu G(\mathcal{L}(X^{t,\xi;\theta^*}_1))(X^{t,x,\xi;\theta^*}_1) \cdot \delta_{\phi} X_1 
        + \tilde{\mathbb{E}}[\partial^2_{\mu\mu} G(\mathcal{L}(X^{t,\xi;\theta^*}_1))(X^{t,x,\xi;\theta^*}_1)(\tilde{X}^{t,\xi;\theta^*}_1) \cdot \delta \tilde{X}_1] \big|\mathcal F_t \right]\right|\\
        &\leq K_G \E[|\delta_{\phi} X_1|  ] + K_G \tilde \E[|\delta \tilde{X}_1|  ] \\
        &\leq  \left(\frac{(K_G)^2}{\kappa} + K_G\right) 
        \E[|\phi(\xi)|] .
\end{align*}
In view of the arbitrariness of $\phi$, we obtain the desired bound for $\partial^2_{\mu\mu} \mathcal V_{\eps}$.
\end{proof}

 \section{Proof of Theorem \ref{thm:value_convergence}}
 \label{app:particle_convergence}
\begin{proof}[Proof of Theorem \ref{thm:value_convergence}]
 
The proof relies on the method of projection and the comparison principle for parabolic equations. 

\textbf{Step 1}
Define the projected function $w^N_{\eps}    : [0, 1] \times (\bR^d)^N \to \bR$ as the value of the mean-field problem scaled by $N$, evaluated at the empirical measure $\mu^N_{\mathbf{x}} = \frac{1}{N} \sum_{i=1}^N \delta_{x_i}$, i.e.,
\begin{equation}
    w^N_{\eps}(t, \mathbf{x}) := N \cdot \mathcal{V}_{\eps}\left(t, \frac{1}{N} \sum_{i=1}^N \delta_{x_i}\right), \quad \text{for } (t, \mathbf{x}) \in [0,1] \times (\bR^d)^N.
\end{equation}
We calculate the derivatives of $w^N_{\eps}$ with respect to the particle positions $x_i$ using the chain rule for Lions derivatives.
For each $i \in \{1, \dots, N\}$,
\begin{equation}
    \partial_{x_i} w^N_{\eps}(t, \mathbf{x}) = N \cdot \frac{1}{N} (\Lions \mathcal{V}_{\eps})(t, \mu^N_{\mathbf{x}})( x_i) 
    = \Lions \mathcal{V}_{\eps}(t, \mu^N_{\mathbf{x}})( x_i).
\end{equation}
For the second derivatives (Hessian), we have  
\begin{equation}
    \Delta_{x_i} w^N_{\eps}(t, \mathbf{x}) = \text{div}_x (\Lions \mathcal{V}_{\eps})(t, \mu^N_{\mathbf{x}})( x_i) 
    + \frac{1}{N} \text{Tr}\left( \partial^2_{\mu\mu} \mathcal{V}_{\eps}(t, \mu^N_{\mathbf{x}})( x_i)( x_i) \right),
\end{equation}
where $\partial^2_{\mu\mu} \mathcal{V}_{\eps}$ is the second-order Lions' derivative with respect to the measure.

\textbf{Step 2.}
Let us check how well the projected function $w^N_{\eps}$ satisfies the $N$-particle HJB equation. 
  Let $\mathcal{L}^N$ be the nonlinear HJB operator:
 $$
 \mathcal{L}^N[\phi] (t, \mathbf{x}) = -\partial_t \phi(t, \mathbf{x}) - \frac{1}{2} \sum_{i=1}^N \Delta_{x_i} \phi(t, \mathbf{x})
 + \frac{1}{2\eps} \sum_{i=1}^N |\partial_{x_i} \phi(t, \mathbf{x})|^2.
 $$
 Substituting $w^N_{\eps}$ into the operator gives
 \begin{align*}
     \mathcal{L}^N[w^N_{\eps}](t, \mathbf{x}) &= -N \partial_t \mathcal{V}_{\eps}(t, \mu^N_{\mathbf{x}}) 
     - \frac{1}{2} \sum_{i=1}^N \left( \text{div}_{x_i} (\Lions \mathcal{V}_{\eps})(t, \mu^N_{\mathbf{x}})(x_i) 
     + \frac{1}{N} \text{Tr}(\partial^2_{\mu\mu} \mathcal{V}_{\eps})(t, \mu^N_{\mathbf{x}})(x_i)(x_i) \right) \\
     &\quad + \frac{1}{2\eps} \sum_{i=1}^N |\Lions \mathcal{V}_{\eps}(t, \mu^N_{\mathbf{x}})(x_i)|^2.
 \end{align*}
 Rewrite the sums as integrals with respect to the empirical measure $\mu^N_{\mathbf{x}}$
 and factoring out $N$ gives
 {\small
 \begin{align*}
     \mathcal{L}^N[w^N_{\eps}](t, \mathbf{x}) 
     &= N \left[ -\partial_t \mathcal{V}_{\eps}(t, \mu^N_{\mathbf{x}}) 
     - \frac{1}{2} \int_{\mathbb R^d} \text{div}_y (\Lions \mathcal{V}_{\eps})(t, \mu^N_{\mathbf{x}})(y) \mu^N_{\mathbf{x}}(dy) 
     + \frac{1}{2\eps} \int_{\mathbb R^d} |\Lions \mathcal{V}_{\eps}(t, \mu^N_{\mathbf{x}})(y)|^2 \mu^N_{\mathbf{x}}(dy) \right] \\
     &\quad 
     - \frac{1}{2} \int_{\mathbb R^d} \text{Tr}(\partial^2_{\mu\mu} \mathcal{V}_{\eps})(t, \mu^N_{\mathbf{x}})(y, y) \mu^N_{\mathbf{x}}(dy).
 \end{align*}
 }
 Notice that the term in the square brackets $[ \dots ]$ is exactly the master equation evaluated at $\mu = \mu^N_{\mathbf{x}}$. 
 Since $\mathcal{V}_{\eps}$ is a solution to the master equation, this bracketed term is zero.
 The remaining term is the residual error
 \begin{equation}
     R^N_{\eps}(t, \mathbf{x}) = - \frac{1}{2} \int_{\mathbb R^d} \text{Tr}(\partial^2_{\mu\mu} \mathcal{V}_{\eps})(t, \mu^N_{\mathbf{x}})(y, y) \mu^N_{\mathbf{x}}(dy),
 \end{equation}
 whose boundedness depends on the regularity of the master equation solution $\mathcal{V}_{\eps}$ in Assumption \ref{ass:smoothness}.
 Thus, we have
$$
|R^N_{\eps}(t, \mathbf{x})| \le \frac{1}{2}  \left|\int_{\mathbb R^d}
\text{Tr}\left(\partial^2_{\mu\mu} \mathcal{V}_{\eps}(t, \mu^N_{\mathbf{x}})(y, y)\right) \mu^N_{\mathbf{x}}(dy) \right| \le \frac{1}{2} L_{\eps}.
$$

It is straightforward to check that the function $v^N_{\eps}$ defined through the (inverse) Cole-Hopf transformation in \eqref{eq:feynman_kac_N} 
is a classical solution to the HJB equation \eqref{eq:particle_hjb}. 
Therefore, the equation satisfied by $w^N_{\eps}$ is
$ \mathcal{L}^N[w^N_{\eps}] = R^N_{\eps}   $,
where the residual $R^N_{\eps}$ is uniformly bounded in $N$ but may depend on $\eps$, while
the equation satisfied by the value function $v^N_{\eps}$ is
$ \mathcal{L}^N[v^N_{\eps}] = 0. $

\textbf{Step 3.}
Let $e^N_{\eps}(t, \mathbf{x}) = v^N_{\eps}(t, \mathbf{x}) - w^N_{\eps}(t, \mathbf{x})$. 
Subtracting the equations for $v^N_{\eps}$ and $w^N_{\eps}$ gives
\begin{align*}
    0 &= \mathcal{L}^N[v^N_{\eps}] - \mathcal{L}^N[w^N_{\eps}] + R^N_{\eps} \\
    &= -\partial_t e^N_{\eps} - \frac{1}{2} \sum_{i=1}^N \Delta_{x_i} e^N_{\eps} + \frac{1}{2\eps} \sum_{i=1}^N \left( |\partial_{x_i} v^N_{\eps}|^2 - |\partial_{x_i} w^N_{\eps}|^2 \right) + R^N_{\eps}.
\end{align*}
We linearize the quadratic term using the identity $|a|^2 - |b|^2 = (a+b) \cdot (a-b)$:
$$
|\partial_{x_i} v^N_{\eps}|^2 - |\partial_{x_i} w^N_{\eps}|^2 = (\partial_{x_i} v^N_{\eps} + \partial_{x_i} w^N_{\eps}) \cdot \partial_{x_i} e^N_{\eps}.
$$
Define the vector field $\mathbf{b}(t, \mathbf{x}) = (\mathbf{b}_1, \dots, \mathbf{b}_N)$ where $\mathbf{b}_i = -\frac{1}{2\eps} (\partial_{x_i} v^N_{\eps} + \partial_{x_i} w^N_{\eps})$.
The error equation becomes a linear parabolic PDE with drift $\mathbf{b}$ and source $R^N_{\eps}$:
\begin{align}
    \begin{cases}
    \partial_t e^N_{\eps} + \frac{1}{2} \sum_{i=1}^N \Delta_{x_i} e^N_{\eps} + \sum_{i=1}^N \mathbf{b}_i \cdot \partial_{x_i} e^N_{\eps} = R^N_{\eps},
    \\
    e^N_{\eps}(1, \mathbf{x}) = 0.
    \end{cases}
\end{align} 
By the standard maximum principle (or comparison principle) for parabolic equations or a straightforward application of the Feynman-Kac formula, 
the solution is bounded by the source term $R^N_{\eps}$, i.e.,
$$
\sup_{\mathbf{x}} |e^N_{\eps}(t, \mathbf{x})| \le \int_t^1 \sup_{\mathbf{x}} |R^N_{\eps}(s, \mathbf{x})| ds \le (1-t) \cdot \frac{L_{\eps}}{2},\quad t\in[0,1].
$$
In this way, we have established that  
$$ |v^N_{\eps}(t, \mathbf{x}) - w^N_{\eps}(t, \mathbf{x})| \le  \frac{L_{\eps}}{2}. $$

Finally, dividing by $N$ to reach the normalized estimate
$$
\left| \frac{v^N_{\eps}(t, \mathbf{x})}{N} - \frac{w^N_{\eps}(t, \mathbf{x})}{N} \right| \le \frac{L_{\eps}}{2N},
$$
and recalling that $\frac{w^N_{\eps}}{N} = \mathcal{V}_\eps(t, \mu^N_{\mathbf{x}})$, we obtain the desired estimate.
\end{proof}


{\footnotesize
\let\oldthebibliography\thebibliography
\renewcommand\thebibliography[1]{%
    \oldthebibliography{#1}%
    \setlength{\itemsep}{0pt}
    \setlength{\parskip}{0pt}
}

}


%
 
\end{document}